\numberwithin{equation}{section}
\theoremstyle{plain}
\newtheorem{theorem}{Theorem}[section]
\newtheorem{lemma}[theorem]{Lemma}
\newtheorem{corollary}[theorem]{Corollary}
\newtheorem{proposition}[theorem]{Proposition}
\theoremstyle{definition}
\newtheorem{definition}[theorem]{Definition}
\theoremstyle{remark}
\newtheorem{remark}[theorem]{Remark}
\newtheorem{case[theorem]}{Case}
\def\norm#1.#2.{\lVert#1\rVert_{#2}}
\title[Decay estimates and Strichartz inequalities on H-type groups]{Decay estimates and Strichartz inequalities for a class of dispersive equations on H-type groups
}
\author{Manli Song}
\author{Jinggang Tan}
\thanks{Jinggang Tan is the corresponding author.}
\address{\endgraf Research\&Development Institute of Northwestern Polytechnical University in Shenzhen, Sanhang Science\&Technology Building, No. 45th, Gaoxin South 9th Road, Nanshan District, Shenzhen City, 518063}
\address{\endgraf School of Mathematics and Statistics, Northwestern Polytechnical University, Xi'an, Shaanxi 710129, China}
\email{mlsong@nwpu.edu.cn}
\address{\endgraf School of Mathematics and Statistics, Jiangxi Normal University, Nanchang, China}
\email{jtan@jxnu.edu.cn}
\keywords{Decay estimate, Strichartz estimate, H-type groups, Sub-Laplacian.}
\subjclass[2010]{Primary 22E25, 33C45, 35H20, 35B40.}
\date{\today}
\begin{document}
	
	\maketitle

	\allowdisplaybreaks

	\begin{abstract}
	Let $\mathcal{L}$ be the sub-Laplacian on H-type groups and $\phi: \mathbb{R}^+ \to \mathbb{R}$ be a smooth function. The primary objective of the paper is to study the decay estimate for a class of dispersive semigroup given by $e^{it\phi(\mathcal{L})}$. Inspired by earlier work of Guo-Peng-Wang \cite{GPW2008} in the Euclidean space and Song-Yang \cite{SY2023} on the Heisenberg group, we overcome the difficulty arising from the non-homogeneousness of $\phi$ by frequency localization, which is based on the non-commutative Fourier transform on H-type groups, the properties of the Laguerre functions and Bessel functions, and the stationary phase theorem. Finally, as applications, we derive the new Strichartz inequalities for the solutions of some specific equations, such as the fractional Schr\"{o}dinger equation, the fourth-order Schr\"odinger equation, the beam equation and the Klein-Gordon equation, which corresponds to $\phi(r)=r^\alpha$, $r^2+r,\sqrt{1+r^2},\sqrt{1+r}$, respectively. Moreover, we also prove that the time decay is sharp in these cases.
	\end{abstract}
	\tableofcontents 	
	
	\section{Introduction}
The key ingredient of this paper is to establish a decay estimate for a class of dispersive equations on H-type groups $G$
	\begin{equation}\label{HSEquation}
	\begin{cases}
	i\partial_tu+\phi(\mathcal{L}) u=f,\\
	u|_{t=0}=u_0,
	\end{cases}
	\end{equation}
where $\mathcal{L}$ is the sub-Laplacian on H-type groups $G$, and $\phi: \mathbb{R}^+ \to \mathbb{R}$ is smooth satisfying:
\\

(H1)~There exists $m_1>0$, such that for any $\alpha \geqslant 2$ and $\alpha \in \mathbb{N}$,
\begin{equation*}
|\phi'(r)| \sim r^{m_1-1}  , \quad |\phi^{(\alpha)}(r)| \lesssim r^{m_1-\alpha},\quad r \geqslant 1.
\end{equation*}

(H2)~There exists $m_2>0$, such that for any $\alpha \geqslant 2$ and $\alpha \in \mathbb{N}$,
\begin{equation*}
|\phi'(r)| \sim r^{m_2-1}  , \quad |\phi^{(\alpha)}(r)| \lesssim r^{m_2-\alpha},\quad 0<r<1.
\end{equation*}

(H3)~There exists $\alpha_1>0$, such that
\begin{equation*}
|\phi''(r)| \sim r^{\alpha_1-2}, \quad r \geqslant1.
\end{equation*}

(H4)~There exists $\alpha_2>0$, such that
\begin{equation*}
|\phi''(r)| \sim r^{\alpha_2-2}, \quad  0<r<1.
\end{equation*}
\begin{remark}
 The assumptions (H1)-(H4) root in \cite{GPW2008}. (H1) and (H3) represent the homogeneous order of $\phi$ in high frequency and guarantee $\alpha_1\leq m_1$. Similarly, the homogeneous order of $\phi$ in low frequency is described by (H2) and (H4) which also make sure  $\alpha_2\geq m_2$.
 \end{remark}
 \begin{remark}
 Here we want to emphasize that many dispersive wave equations associated with the sub-Laplacian on H-type groups are reduced to the form \eqref{HSEquation}. For instance, the Schr\"{o}dinger equation corresponds to $\phi(r)=r^2$, the wave equation corresponds to $\phi(r)=r$, the fractional Schr\"{o}dinger equation corresponds to $\phi(r)=r^\alpha$ $(0<\alpha<2$ and $\alpha\neq1)$,  the fourth-order Schr\"{o}dinger equation corresponds to $\phi(r)=r^2+r^4$, the beam equation corresponds to $\phi(r)=\sqrt{1+r^4}$ and the Klein-Gordon equation corresponds to $\phi(r)=\sqrt{1+r^2}$, etc.
 \end{remark}

Strichartz estimates are a type of space-time decay
estimates for solutions to envolution equations whose common property is that their solutions tend to disperse over time. The significance of Strichartz estimates is evident in many ways. Such estimates were originally proved by Strichartz \cite{Str} in the late 1970s as an important application of
restriction estimates for quadratic surfaces, which play a fundamental role in the field of classical harmonic analysis. Strichartz estimates are also
powerful tools to study nonlinear versions of these equations, such as the local and global well-posedness problems for nonlinear partial differential equations (see for instance \cite{Tao2006, BCD} and the
references therein). Furthermore, Strichartz estimates associated to wave equations have been used to prove pointwise Fourier convergence (see \cite{PT}).

Strichartz estimates in the Euclidean setting have been proved for many dispersive equations, such as the wave equation and Schr\"{o}dinger equation (see for instance the pioneering works \cite{GV, KT, Str}). To obtain Strichartz estimates, it involves basically two types of ingredients. The first one consists in estimating the decay in time on the free evolution equation of the form
\begin{align*}\left\|e^{i t \phi(\sqrt{-\Delta})} u_0\right\|_X \lesssim|t|^{-\theta}\left\|u_0\right\|_{X^*},\end{align*}
where $X^*$ is the dual space of $X$.  Its proof requires more elaborate
techniques involving oscillatory integrals and the application of a
stationary phase theorem. In the past decades, dispersive properties of
evolution equations have become a crucial tool in a variety of questions, including local and global existence for nonlinear partial differential equations, the well-posedness of Cauchy problems for nonlinear partial differential equations in Sobolev spaces of low order, scattering theory, and many others.
The second one consists of abstract functional arguments, which are mainly $TT^*$-duality arguments. Therefore, the decay estimate plays a crucial role in the derivation of Strichartz estimates.

In 2008, Guo-Peng-Wang \cite{GPW2008} used a unified way to study the decay for a class of dispersive semigroup $e^{it\phi(\sqrt{-\Delta})}$ on the Euclidean space $\mathbb{R}^d$
\begin{equation*}
	\begin{cases}
	i\partial_tu+\phi(\sqrt{-\Delta}) u=f,\\
	u|_{t=0}=u_0,
	\end{cases}
	\end{equation*}
 where $\Delta=-\sum\limits_{j=1}^d\partial_{x_j}^2$ is the Laplacian on $\mathbb{R}^d$. When $\phi$ is a homogeneous function of order $m$, namely, $\phi(\lambda r)=\lambda^m\phi(r),\,\forall \lambda>0$, the dispersive estimate can be easily obtained by a theorem of Littman and dyadic decomposition. However, it becomes very complicated when $\phi$ is not homogeneous since the scaling constants can not be effectively separated from the time.  To overcome the difficulty, they applied frequency localization by separating $\phi$ between high and low frequency in different scales. Guo-Peng-Wang\cite{GPW2008} assumed that $\phi: \mathbb{R}^+ \to \mathbb{R}$ is smooth satisfying (H1)-(H4) and obtained the following decay estimates in time for the semigroup $e^{it\phi(\sqrt{-\Delta})}$:

	(1)~For $j \geqslant 0$, $\phi$ satisfies (H1), then
	\begin{equation*}
	\left\|e^{it\phi(\sqrt{-\Delta})}\Delta_ju_0\right\|_{L^\infty(\mathbb{R}^d)}\lesssim |t|^{-\theta}2^{j(d-m_1\theta)}\|u_0\|_{L^1(\mathbb{R}^d)},\,0\leq\theta\leq\frac{d-1}{2}.
	\end{equation*}
 In addition, if $\phi$ satisfies (H3), then
	\begin{equation*}
	\left\|e^{it\phi(\sqrt{-\Delta})}\Delta_ju_0\right\|_{L^\infty(\mathbb{R}^d)}\lesssim|t|^{-\frac{d-1+\theta}{2}}2^{j(d-\frac{m_1(d-1+\theta)}{2}-\frac{\theta(\alpha_1-m_1)}{2})}\|u_0\|_{L^1(\mathbb{R}^d)},\, 0 \leq \theta \leq 1.
	\end{equation*}
	
	(2)~For $j < 0$, $\phi$ satisfies (H2), then
		\begin{equation*}
		\left\|e^{it\phi(\sqrt{-\Delta})}\Delta_ju_0\right\|_{L^\infty(\mathbb{R}^d)}\lesssim |t|^{-\theta}2^{j(d-m_2\theta)}\|u_0\|_{L^1(\mathbb{R}^d)},\,0\leq\theta\leq\frac{d-1}{2}.
		\end{equation*}
In addition, if $\phi$ satisfies (H4), then
\begin{equation*}
\left\|e^{it\phi(\sqrt{-\Delta})}\Delta_ju_0\right\|_{L^\infty(\mathbb{R}^d)}\lesssim|t|^{-\frac{d-1+\theta}{2}}2^{j(d-\frac{m_2(d-1+\theta)}{2}-\frac{\theta(\alpha_2-m_2)}{2})}\|u_0\|_{L^1(\mathbb{R}^d)},\, 0 \leq \theta \leq 1.
\end{equation*}

   (3)~If $\phi$ satisfies (H2), then
	\begin{equation*}
    \left\|e^{it\phi(\sqrt{-\Delta})}P_{\leq 0}u_0\right\|_{L^\infty(\mathbb{R}^d)}\lesssim(1+|t|)^{-\theta}\|u_0\|_{L^1(\mathbb{R}^d)},\, \theta=\min \left(\frac{d}{m_2}, \frac{d-1}{2}\right).
	\end{equation*}
  In addition, if $\phi$ satisfies (H4) and $\alpha_2=m_2$, then
	\begin{equation*}
	 \left\|e^{it\phi(\sqrt{-\Delta})}P_{\leq 0}u_0\right\|_{L^\infty(\mathbb{R}^d)}\lesssim(1+|t|)^{-\theta}\|u_0\|_{L^1(\mathbb{R}^d)},\, \theta=\min \left(\frac{d}{m_2}, \frac{d}{2}\right).
	\end{equation*}
Here we choose $\Phi: \mathbb{R}^d\rightarrow [0,1]$ being an even, smooth radial function such that $supp\, \Phi\subseteq \{x\in\mathbb{R}^d: |x|\leq 2\}$ and $\Phi(x)=1,\,\forall |x|\leq 1$. Define $\Phi_j(x)=\Phi(2^{-j}x)-\Phi(2^{-j+1}x)$ and $\{\Phi_j\}_{j\in\mathbb{Z}}$ is the Littlewood-Paley dyadic partition of unity on $\mathbb{R}^d$. Let $\Delta_j=\mathcal{F}^{-1}\Phi_j\mathcal{F}$ be the Littlewood-Paley projector and $P_{\leq 0}=\mathcal{F}^{-1}\Phi\mathcal{F}$, where $\mathcal{F}$ is the Fourier transform on $\mathbb{R}^d$. Moreover, by using standard duality arguments, they applied the above estimates to some concrete  equations and established the corresponding Strichartz estimates. Their results could not only cover known results but also make some improvements and provide simple proof for the wellposedness of some nonlinear dispersive equations.

Many authors are also interested in adapting the well known Strichartz estimates from the Euclidean setting to a more abstract setting, such as H-type groups, metric measure spaces, hyperbolic spaces (noncompact and negatively curved), compact Riemannian manifolds and bounded domains (with
a possible loss of derivatives). In 2000, Bahouri-G\'{e}rard-Xu \cite{BGX2000} initially discussed the Strichartz estimates associated with the sub-Laplacian on the Heisenberg group, by means of Besov spaces defined by a Littlewood-Paley decomposition related to the spectral of the sub-Laplacian. In their work, they showed such estimates existed for the wave equation, while surprisingly failed for the Schr\"{o}dinger equation. To avoid the particular behavior of the Schr\"{o}dinger operator, one way is to replace the Heisenberg group by a bigger space, like H-type groups whose center dimension is bigger than $1$, on which Hierro \cite{H2005} proved that the Schr\"{o}dinger equation is dispersive. For more results on H-type groups and more generality of step $2$ stratified Lie groups, see \cite{BKG, Song2016, SZ}.  Another way is to adapt the Fourier transform restriction analysis initiated by Strichartz \cite{Str} in the  Euclidean space. Using this approach, Bahouri-Barilari-Gallagher \cite{BBG2021} obtain an anisotropic Strichartz estimate for the Schr\"{o}dinger operator on the Heisenberg group. Inspired by Guo-Peng-Wang \cite{GPW2008}, Song-Yang \cite{SY2023} discussed the decay for a class of dispersive semigroup on the Heisenberg group and as applications they obtained Strichartz inequalites for solutions of some specific equations, such as the frational Schr\"{o}dinger equation, the fractional wave equation and the fourth-order Schr\"{o}dinger equation. For more results on the Heisenberg group, see \cite{FMV1, FV, LS2014}. There are also a number of works considered involving Strichartz estimates in the abstract setting of metric measure spaces (see \cite{BDXM2019, DPR2010, FSW, NR2005, R2008, S2013}), hyperbolic spaces (see \cite{AP2009, AP2014, APV2012}), compact Riemannian manifolds (see \cite{B1993, BGT2004}) and bounded domains (see \cite{ILP2014}), etc.

Our main aim is to establish the decay estimate for a class of dispersive equations \eqref{HSEquation} on H-type groups $G$. The proof originates from the combination of Hierro \cite {H2005}, Guo-Peng-Wang \cite{GPW2008} and Song-Yang \cite{SY2023}. By the non-commutative Fourier analysis on H-type groups, the dispersive estimate is reduced to a sum of oscillatory integrals involving a series of Laguerre functions in $p$-dimension where $p$ is the dimension of the center of the H-type groups $G$. The main difficulties are that the Fourier transform on H-type groups is more complicated than that in the Euclidean case, the scaling constants can not be effectively separated from the time $t$ when $\phi$ is not homogeneous, and the high center dimension $p>1$ involves a series of high-dimensional oscillatory integrals. We overcome these difficulties by non-commutative Fourier transform on H-type groups, the frequency localization method, the properties of the Laguerre functions and Bessel functions, and the stationary phase theorem. Our main results are in the following.

\begin{theorem}\label{ResultTime}
	Assume $\phi:\mathbb{R}^+ \to \mathbb{R}$ is smooth. $U_t=e^{it\phi(\mathcal{L})}$. $\varphi_j$ and $\psi$ are the kernels related to the Littlewood-Paley decomposition introduced in Section 2. $N=2d+2p$ is the homogeneous dimension of the H-type group $G$ with underlying manifold $\mathbb{R}^{2d+p}$. Then the following results hold true.
	
	$(a)$~For $j \geq 0$, $\phi$ satisfies (H1), then
	\begin{equation}\label{res3-2}
	\|U_t \varphi_j\|_{L^\infty(G)} \leq C_{d,p}|t|^{-\theta}2^{j\left(N-2m_1\theta\right)},\,0 \leq \theta \leq \frac{p-1}{2}.
	\end{equation}

In addition, if $\phi$ satisfies (H3), then
	\begin{equation}\label{res3-3}
	\|U_t \varphi_j\|_{L^\infty(G)} \leq C_{d,p}|t|^{-\frac{p-1+\theta}{2}}2^{j\left(N-m_1(p-1+\theta)-\theta(\alpha_1-m_1)\right)},\,   0 \leq \theta \leq 1.
	\end{equation}
	
	$(b)$~For $j < 0$, $\phi$ satisfies (H2), then
	\begin{equation*}
	\|U_t \varphi_j\|_{L^\infty(G)} \leq C_{d,p}|t|^{-\theta}2^{j\left(N-2m_2\theta\right)},\,0 \leq \theta \leq \frac{p-1}{2}.
	\end{equation*}

In addition, if $\phi$ satisfies (H4), then
	\begin{equation*}
	\|U_t \varphi_j\|_{L^\infty(G)} \leq C_{d,p}|t|^{-\frac{p-1+\theta}{2}}2^{j\left(N-m_2(p-1+\theta)-\theta(\alpha_2-m_2)\right)},\,   0 \leq \theta \leq 1.
	\end{equation*}

    $(c)$~If $\phi$ satisfies (H2), then
	\begin{equation}\label{res-sum1}
 \|U_t \psi\|_{L^\infty(G)} \leq C_{d,p}(1+|t|)^{-\theta},\,\theta=\min\left(\frac{N}{2m_2},\frac{p-1}{2}\right).
	\end{equation}

In addition, if $\phi$ satisfies (H4) and $\alpha_2=m_2$, then
	\begin{equation}\label{res-sum2}
	\|U_t \psi\|_{L^\infty(G)} \leq C_{d,p}(1+|t|)^{-\theta},\,\theta=\min\left(\frac{N}{2m_2},\frac{p}{2}\right);
	\end{equation}
 if $\phi$ satisfies (H4), $\alpha_2>m_2$ and $p<\frac{N-(\alpha_2-m_2)}{m_2}$, then
 \begin{equation}\label{res-sum2-n}
	\|U_t \psi\|_{L^\infty(G)} \leq C_{d,p}(1+|t|)^{-\frac{p}{2}}.
	\end{equation}
\end{theorem}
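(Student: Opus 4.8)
The plan is to pass to the group Fourier transform on $G$ and reduce every kernel to an explicit family of oscillatory integrals. Writing a point as $(x,z)\in\mathbb{R}^{2d}\times\mathbb{R}^{p}$ and using the Plancherel theorem together with the spectral decomposition of $\mathcal{L}$, on the representation attached to a central frequency $\lambda\in\mathbb{R}^p\setminus\{0\}$ the sub-Laplacian becomes a rescaled Hermite operator with eigenvalues $(2k+d)|\lambda|$, $k\in\mathbb{N}$, and Laguerre eigenfunctions. I would therefore express, schematically, the convolution kernel of $U_t\varphi_j$ as
\begin{equation*}
K_{t,j}(x,z)=c_{d,p}\sum_{k\geq0}\int_{\mathbb{R}^{p}}e^{i\lambda\cdot z}\,e^{it\phi((2k+d)|\lambda|)}\,\beta_j\big((2k+d)|\lambda|\big)\,|\lambda|^{d}\,L_k^{d-1}\!\Big(\tfrac{|\lambda|\,|x|^2}{2}\Big)\,e^{-|\lambda||x|^2/4}\,d\lambda,
\end{equation*}
where $L_k^{d-1}$ are the Laguerre functions and $\beta_j$ is the spectral cutoff corresponding to the Littlewood-Paley kernel $\varphi_j$, supported where the eigenvalue of $\mathcal{L}$ is of size $2^{2j}$ (equivalently $\sqrt{\mathcal{L}}\sim 2^{j}$). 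The whole theorem then amounts to bounding $\|K_{t,j}\|_{L^\infty}$ uniformly in $(x,z)$, since $\|U_t\varphi_j\|_{L^\infty(G)}=\|K_{t,j}\|_{L^\infty}$.

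With this representation the two estimates in part $(a)$ (resp. $(b)$) come from two complementary mechanisms, after noting that on $\supp\beta_j$ the eigenvalue satisfies $(2k+d)|\lambda|\sim2^{2j}$, so that (H1) and (H3) for $j\ge0$ (resp. (H2) and (H4) for $j<0$, with $m_2,\alpha_2$) give $|\phi'|\sim r^{m_1-1}$ and $|\phi''|\sim r^{\alpha_1-2}$ with $r\sim 2^{2j}$. First, integrating the angular part of $\lambda$ over $S^{p-1}$ against $e^{i\lambda\cdot z}$ produces a Bessel function $J_{(p-2)/2}(|z|\,|\lambda|)$, whose decay $|J_\nu(\rho)|\lesssim \rho^{-1/2}$ is the source of the angular gain. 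The first bound, valid for $0\le\theta\le(p-1)/2$, I would obtain by interpolating the trivial $L^1\to L^\infty$ estimate ($\theta=0$) with the endpoint $\theta=(p-1)/2$ furnished by the full Bessel decay, converting $|t|^{-\theta}$ into the factor $2^{-2m_1\theta j}$ through the fact that each power of $|t|^{-1}$ is paid for by $2^{2m_1 j}$ on $\supp\beta_j$. Second, the sharper bound uses the stationary phase theorem in the radial variable $s=|\lambda|$: the phase $t\phi((2k+d)s)\pm|z|s$ has non-degenerate critical points whose Hessian is controlled by $\phi''$, so (H3)/(H4) contribute one further half power of $|t|$ and upgrade the rate to $|t|^{-(p-1+\theta)/2}$ for $0\le\theta\le1$, the correction $-\theta(\alpha_1-m_1)$ in the exponent recording the gap between the sizes of $\phi''$ and $\phi'$.

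Next I would sum the per-mode estimates over the Laguerre index $k$. On $\supp\beta_j$ the constraint $(2k+d)|\lambda|\sim2^{2j}$ limits the number of active modes, and combined with the Plancherel weight $|\lambda|^d$ and the $\lambda$-integration this reproduces the correct homogeneity: the summation yields the prefactor $2^{jN}$ with $N=2d+2p$, dressed by the $\theta$-dependent corrections above. Carrying this out requires uniform control of the Laguerre sums $\sum_k L_k^{d-1}(\cdot)\,e^{-(\cdot)}$ (via known Laguerre/Ces\`aro bounds) together with careful bookkeeping of the scaling of the measure, which produces precisely the exponents $N-2m_1\theta$ and $N-m_1(p-1+\theta)-\theta(\alpha_1-m_1)$ in $(a)$, and their low-frequency counterparts in $(b)$.

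Finally, part $(c)$ follows by summing the low-frequency bounds of $(b)$ over $j<0$ for the kernel $\psi=\sum_{j<0}\varphi_j$. The resulting geometric series $\sum_{j<0}2^{j(\,\cdot\,)}$ converges exactly when the bracketed exponent is positive, so optimizing $\theta$ subject to this constraint gives $\theta=\min\big(\tfrac{N}{2m_2},\tfrac{p-1}{2}\big)$ in \eqref{res-sum1}, the factor $(1+|t|)$ absorbing the region $|t|\lesssim1$ via the trivial bound. When (H4) holds with $\alpha_2=m_2$ the radial phase is genuinely of order $m_2$ and one extra half power is available, improving the exponent to $\min\big(\tfrac{N}{2m_2},\tfrac{p}{2}\big)$ in \eqref{res-sum2}; in the borderline case $\alpha_2>m_2$ the series remains summable only under $p<\frac{N-(\alpha_2-m_2)}{m_2}$, yielding the clean rate $(1+|t|)^{-p/2}$ of \eqref{res-sum2-n}. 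I expect the principal obstacle to be the interaction of the $p$-dimensional oscillatory integral with the Laguerre sum: in contrast to the Heisenberg case $p=1$, for $p>1$ one must combine the angular Bessel decay with a radial stationary phase \emph{uniformly} in $k$ and then sum in $k$ without degrading the sharp power $2^{jN}$. Securing these uniform-in-$k$ constants, and the exact balance between the angular gain $(p-1)/2$ and the radial gain governed by $\phi''$, is the technical core of the proof.
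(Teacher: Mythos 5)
Your outline follows the paper's own route essentially step for step: non-commutative Fourier inversion into a Laguerre series, the change of variables putting the spectral cutoff on a fixed annulus, integration over $\mathbb{S}^{p-1}$ producing $J_{(p-2)/2}$, and then the dichotomy between non-stationary integration by parts (each step paying $|t|^{-1}2^{-2jm_1}$ via (H1)) and the critical region $|s|\sim|t|2^{2jm_1}$, where the decay $(1+|s|)^{-(p-1)/2}$ of $r^{-(p-2)/2}J_{(p-2)/2}(r)$ gives the endpoint $\theta=\frac{p-1}{2}$ and, under (H3)/(H4), van der Corput with $|\frac{d^2}{dr^2}\phi(2^{2j}r)|\sim 2^{2j\alpha_1}$ supplies the extra half power; interpolation and summation in the Laguerre index via $|\mathfrak{L}_m^{(d-1)}|\lesssim(2m+d)^{d-1/4}$ (so that the Jacobian $(2m+d)^{-(d+p)}$ leaves a summable $(2m+d)^{-p-1/4}$) complete parts (a) and (b). This is exactly the paper's argument, so (a) and (b) are fine in outline.

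The genuine gap is in part (c) at the endpoint. You argue that the series $\sum_{j<0}2^{j(N-2m_2\theta)}$ converges exactly when the exponent is positive, "so optimizing $\theta$ subject to this constraint gives $\theta=\min\left(\frac{N}{2m_2},\frac{p-1}{2}\right)$". But when $\frac{N}{2m_2}\leq\frac{p-1}{2}$ the claimed value is $\theta=\frac{N}{2m_2}$, for which the exponent $N-2m_2\theta$ vanishes and $\sum_{j\leq 2}2^{j\cdot 0}$ diverges; positivity only yields every $\theta<\frac{N}{2m_2}$ with a constant that blows up as $\theta\uparrow\frac{N}{2m_2}$, not the endpoint itself. (The same issue arises for \eqref{res-sum2} when $\frac{N}{2m_2}\leq\frac{p}{2}$; only \eqref{res-sum2-n}, where the hypothesis $p<\frac{N-(\alpha_2-m_2)}{m_2}$ is strict, is safe by naive summation.) The paper closes this with an additional argument you would need: for fixed $(s,t)$ there is essentially one critical scale $j_0$, determined by $\frac{2^{2j_0}}{2m+d}|s|\sim|t|2^{2j_0m_2}$, at which the stationary-region bound already gives $|t|^{-\frac{p-1}{2}}2^{j_0(N-m_2(p-1))}\lesssim|t|^{-\frac{N}{2m_2}}$ directly; the $O(1)$ scales with $|j-j_0|\leq C$ are handled by this bound, while the scales with $|j-j_0|>C$ are non-stationary, so one applies the integration-by-parts bound with $q=0$ when $2^j\leq|t|^{-\frac{1}{2m_2}}$ and with $q=\frac{p+1}{2}\leq d$ when $2^j>|t|^{-\frac{1}{2m_2}}$, and sums the two geometric tails by Lemma \ref{Sum}. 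Without some such splitting around the critical scale your summation cannot reach the stated exponent.
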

Finally, as applications, in Section $5$ we shall derive the new Strichartz inequalities for the solutions of some specific equations, such as the fractional Schr\"{o}dinger equation, the fourth-order Schr\"odinger equation, the beam equation and the Klein-Gordon equation, which corresponds to $\phi(r)=r^\alpha$, $r^2+r,\sqrt{1+r^2},\sqrt{1+r}$, respectively.  

This paper is organized as follows: In Section 2, we introduce some basic notations and definitions on H-type groups. We also
recall the properties for spherical Fourier transform and the homogeneous Besov spaces on H-type groups.  Section
3 is dedicated to describe the useful technical lemmas for proving the decay estimates and Strichartz estimates. In Section 4, we prove our main results Theorem \ref{ResultTime}. In Section 5, we exploit Theorem \ref{ResultTime} to obtain the Strichartz estimates for some concrete equations, such as the fractional Schr\"{o}dinger equation, the fourth-order Schr\"{o}dinger equation, the beam equation and the Klein-Gordon equation. It is worth noticing that the list of applications is not exhaustive since we just intend to show the generality of our approach.

\begin{remark}
	 Throughout this paper, $A\lesssim B$ means that $A\leqslant CB$, and $A\sim B$ stands for $C_1B\leq A\leq C_2B$, where $C$, $C_1$, $C_2$ denote positive universal constants.
\end{remark}

\section{Preliminaries}
\subsection{H-type groups}
Let $\mathfrak{g}$ be a two step nilpotent Lie algebra endowed with an inner product $\langle \cdot,\cdot \rangle$. Its center is denoted by $\mathfrak{z}$. $\mathfrak{g}$ is said to be of H-type if $[\mathfrak{z}^{\bot},\mathfrak{z}^{\bot}]=\mathfrak{z}$ and for every $s \in \mathfrak{z}$, the map $J_s: \mathfrak{z}^{\bot} \rightarrow \mathfrak{z}^{\bot}$ defined by
\begin{equation*}
\langle J_s u, w \rangle:=\langle s, [u,w] \rangle, \quad\forall u, w \in \mathfrak{z}^{\bot},
\end{equation*}
is an orthogonal map whenever $|s|=1$.\\
\indent An H-type group is a connected and simply connected Lie group $G$ whose Lie algebra is of H-type.\\
\indent Given $0 \neq a \in \mathfrak{z}^*$, the dual of $\mathfrak{z}$, we can define a skew-symmetric mapping $B(a)$ on $\mathfrak{z}^{\bot}$ by
\begin{equation*}
\langle B(a)u,w \rangle =a([u,w]), \forall u,w \in \mathfrak{z}^{\bot}.
\end{equation*}
We denote by $z_a$ the element of $\mathfrak{z}$ determined by
\begin{equation*}
\langle B(a)u,w \rangle =a([u,w])=\langle J_{z_a} u,w \rangle.
\end{equation*}
Since $B(a)$ is skew symmetric and non-degenerate, the dimension of $\mathfrak{z}^{\bot}$ is even, i.e. dim$\mathfrak{z}^{\bot}=2d$.
We can choose an orthonormal basis
\begin{equation*}
\{E_1(a),E_2(a),\cdots,E_d(a),\overline{E}_1(a),\overline{E}_2(a),\cdots,\overline{E}_d(a)\}
\end{equation*}
of $\mathfrak{z}^{\bot}$ such that
\begin{equation*}
B(a)E_i(a)=|z_a|J_{\frac{z_a}{|z_a|}}E_i(a)=|a|\overline{E}_i(a)
\end{equation*}
and
\begin{equation*}
B(a)\overline{E}_i(a)=-|a|E_i(a).
\end{equation*}
We set $p=dim \mathfrak{z}$. We can choose an orthonormal basis $\{\epsilon_1,\epsilon_2,\cdots,\epsilon_p \}$ of $\mathfrak{z}$ such that $a(\epsilon_1)=|a|,a(\epsilon_j)=0,j=2,3,\cdots,p$. Then we can denote the element of $\mathfrak{g}$ by
\begin{equation*}
(z,t)=(x,y,t)=\underset{i=1}{\overset{d}{\sum}}(x_i E_i+y_i \overline{E}_i )+\underset{j=1}{\overset{p}{\sum}}s_j \epsilon_j.
\end{equation*}
We identify $G$ with its Lie algebra $\mathfrak{g}$ by exponential map. The group law on H-type group $G$ has the form
\begin{equation}
(z,s)(z',s')=(z+z',s+s'+\frac{1}{2}[z,z']),  \label{equ:Law}
\end{equation}
where $[z,z']_j=\langle z,U^jz' \rangle$ for a suitable skew symmetric matrix $U^j,j=1,2,\cdots,p$.
\begin{theorem} \indent G is an H-type group with underlying manifold $\mathbb{R}^{2d+p}$, with the group law  $\eqref{equ:Law}$ and the matrix $U^j,j=1,2,\cdots$,p, satisfies the following conditions:\\
$(i)$ $U^j$ is a $2d \times 2d$ skew symmetric and orthogonal matrix, $j=1,2,\cdots$,p.\\
$(ii)$ $U^i U^j+U^j U^i=0,i,j=1,2,\cdots,p$ with $i \neq j$.
\end{theorem}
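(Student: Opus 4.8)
The plan is to reduce everything to the algebraic structure of the maps $J_s$, since the matrices $U^j$ appearing in the group law are nothing but the $J_{\epsilon_j}$ (up to a sign) read off against the orthonormal basis $\{\epsilon_1,\dots,\epsilon_p\}$ of $\mathfrak{z}$. Concretely, I would first record the two elementary properties of $J_s$ that the H-type condition forces—skew-symmetry and a Clifford-type relation—then identify $U^j$ with $\pm J_{\epsilon_j}$, and finally transport skew-symmetry, orthogonality and the anticommutation relation from the $J$'s to the $U$'s. The dimension count $2d\times 2d$ is already available, being exactly the statement $\dim\mathfrak{z}^{\bot}=2d$ established just above from the non-degeneracy of $B(a)$.

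First I would prove that each $J_s$ is skew-symmetric. From the defining identity $\langle J_s u, w\rangle=\langle s,[u,w]\rangle$ together with the antisymmetry $[u,w]=-[w,u]$ of the Lie bracket, one gets $\langle J_s u,w\rangle=-\langle s,[w,u]\rangle=-\langle J_s w,u\rangle$ for all $u,w\in\mathfrak{z}^{\bot}$, that is $J_s^{\mathrm T}=-J_s$. Next I would upgrade the orthogonality hypothesis, which is only assumed for $|s|=1$, to the full Clifford relation. By linearity of $s\mapsto J_s$ and the skew-symmetry just obtained, orthogonality gives $\langle J_s^2 u,u\rangle=-\langle J_s u,J_s u\rangle=-|s|^2|u|^2$ for every $s\neq 0$ and every $u$, and since $J_s^2$ is symmetric this forces $J_s^2=-|s|^2 I$; polarizing the identity $J_{s+s'}^2=-|s+s'|^2 I$ then yields $J_sJ_{s'}+J_{s'}J_s=-2\langle s,s'\rangle I$.

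The third step is the identification. Expanding $[u,w]=\sum_{j=1}^p [u,w]_j\,\epsilon_j$ in the orthonormal basis and pairing with $\epsilon_j$ gives $[u,w]_j=\langle\epsilon_j,[u,w]\rangle=\langle J_{\epsilon_j}u,w\rangle$. Comparing this with the group-law expression $[u,w]_j=\langle u,U^j w\rangle$ and using $J_{\epsilon_j}^{\mathrm T}=-J_{\epsilon_j}$ forces $U^j=-J_{\epsilon_j}$. Conclusion $(i)$ is then immediate: $U^j$ inherits skew-symmetry from $J_{\epsilon_j}$, and since $|\epsilon_j|=1$ the map $J_{\epsilon_j}$ is orthogonal, hence so is its negative $U^j$. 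Conclusion $(ii)$ follows directly from the Clifford relation of the previous step: for $i\neq j$ one has $U^iU^j+U^jU^i=J_{\epsilon_i}J_{\epsilon_j}+J_{\epsilon_j}J_{\epsilon_i}=-2\langle\epsilon_i,\epsilon_j\rangle I=0$ by orthonormality of the basis.

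I expect no serious obstacle, as the statement is purely structural, but two points deserve care. The first is deriving $J_s^2=-|s|^2 I$ from an orthogonality assumption that is only posited for unit $s$: one must invoke linearity in $s$ to rescale, and combine it with the skew-symmetry from the first step, before polarization is legitimate. The second is the transpose and sign bookkeeping in the identification step, where obtaining $U^j=-J_{\epsilon_j}$ rather than $+J_{\epsilon_j}$ depends on the convention $[z,z']_j=\langle z,U^j z'\rangle$; here I would simply verify that the overall sign affects neither the skew-symmetry and orthogonality in $(i)$ nor the anticommutation in $(ii)$, since all three are invariant under $U^j\mapsto -U^j$.
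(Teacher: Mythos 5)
Your argument is correct, and it fills a gap the paper leaves open: the paper offers no proof of this theorem, deferring entirely to the citation [BU] (Bonfiglioli--Uguzzoni). What you write is essentially the standard derivation from Kaplan's theory: skew-symmetry of $J_s$ from antisymmetry of the bracket, the rescaling $J_s=|s|J_{s/|s|}$ plus polarization to get the Clifford relation $J_sJ_{s'}+J_{s'}J_s=-2\langle s,s'\rangle I$, and the identification $U^j=\pm J_{\epsilon_j}$ by comparing $[z,z']_j=\langle z,U^jz'\rangle$ with $\langle \epsilon_j,[z,z']\rangle=\langle J_{\epsilon_j}z,z'\rangle$. Each step checks out, including the two points you flag: $J_s^2=-|s|^2I$ does follow from symmetry of $J_s^2$ together with $\langle J_s^2u,u\rangle=-|s|^2|u|^2$, and the sign ambiguity in $U^j=-J_{\epsilon_j}$ is indeed immaterial for (i) and (ii).

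One caveat worth stating explicitly: in [BU] this result is a \emph{characterization} --- a group $(\mathbb{R}^{2d+p},\cdot)$ with law \eqref{equ:Law} is of H-type \emph{if and only if} the $U^j$ satisfy (i) and (ii). You prove only the forward implication (H-type $\Rightarrow$ (i), (ii)), which is all the wording of the statement here literally asserts and all the paper subsequently uses; but if the theorem is meant in the biconditional sense of [BU], the converse (reconstructing the inner product, checking $[\mathfrak{z}^\perp,\mathfrak{z}^\perp]=\mathfrak{z}$, and verifying that $J_s$ is orthogonal for $|s|=1$ from (i) and (ii)) would still need to be supplied. That direction is also routine --- it essentially reverses your computation --- but it is a distinct half of the proof.
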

{\bf Proof.} See \cite{BU}.
\begin{remark}\label{p-d} It is well known that H-type algebras are closely related to Clifford modules (see \cite{R}). H-type algebras can be classified by the standard theory of Clifford algebras. Specially, on H-type group $G$, there is a relation between the dimension of the center and its orthogonal complement space. That is $p+1\leq 2d$ (see \cite{KR}).
\end{remark}
\begin{remark}
We identify $G$ with $\mathbb{R}^{2d}\times\mathbb{R}^p$ and denote by $n=2d+p$ its topological dimension. Following Folland and Stein (see \cite{FS}), we will exploit the canonical homogeneous structure, given by the family of dilations $\{\delta_r\}_{r>0}$,
\begin{equation*}
\delta_r(z,s)=(rz,r^2s).
\end{equation*}
We then define the homogeneous dimension of $G$ by $N=2d+2p$.
\end{remark}
The left invariant vector fields which agree respectively with $\frac{\partial}{\partial x_j},\frac{\partial}{\partial y_j}$ at the origin are given by
\begin{equation*}
\begin{aligned}
X_j&=\frac{\partial}{\partial x_j}+\frac{1}{2}\underset{k=1}{\overset{p}{\sum}} \left( \underset{l=1}{\overset{2d}{\sum}}z_l U_{l,j}^k \right) \frac{\partial}{\partial s_k},\\
Y_j&=\frac{\partial}{\partial y_j}+\frac{1}{2}\underset{k=1}{\overset{p}{\sum}} \left( \underset{l=1}{\overset{2d}{\sum}}z_l U_{l,j+d}^k \right) \frac{\partial}{\partial s_k},\\
\end{aligned}
\end{equation*}
where $z_l=x_l,z_{l+d}=y_l,l=1,2,\cdots,d$. In terms of these vector fields we introduce the sublaplacian $\Delta$ by
\begin{equation*}
\mathcal{L}=-\underset{j=1}{\overset{d}{\sum}}(X_j^2 +Y_j^2).
\end{equation*}
\noindent\\[2mm]
\subsection{Spherical Fourier transform}
Kor\'{a}nyi \cite{Kor}, Damek and Ricci \cite{DR} have computed the spherical functions associated to the Gelfand pair $(G, O(d))$ (we identify $O(d)$ with $O(d)\otimes Id_p$). They involve, as on the Heisenberg group, the Laguerre functions
\begin{equation*}
\mathfrak{L}_m^{(\gamma)}(\tau)=L_m^{(\gamma)}(\tau)e^{-\tau/2}, \quad \tau \in \mathbb{R}, m,\gamma \in \mathbb{N},
\end{equation*}
where $L_m^{(\gamma)}$ is the Laguerre polynomial of type $\gamma$ and degree $m$.

We say a function $f$ on $G$ is radial if the value of $f(z,s)$ depends only on $|z|$ and $s$.  We denote by $\mathcal{S}_{rad}(G)$ and $L^q_{rad}(G)$,$1\leq q\leq \infty$, the spaces of radial functions in $\mathcal{S}(G)$ and $L^p(G)$, respectively. In particular, the set of $L^1_{rad}(G)$ endowed with the convolution product
\begin{equation*}
f_1*f_2(g)=\int_Gf_1(gg'^{-1})f_2(g')\,dg', \quad g\in G
\end{equation*}
is a commutative algebra.

Let $f\in L^1_{rad}(G)$. We define the spherical Fourier transform, $ m\in\mathbb{N}, \lambda\in\mathbb{R}^p$,
\begin{equation*}
\hat{f}(\lambda,m)=\left( \begin{array}{c} m+d-1\\m \end{array} \right)^{-1}
\int_{\mathbb{R}^{2d+p}}e^{i\lambda s} f(z,s)\mathfrak{L}_m^{(d-1)}(\frac{|\lambda|}{2}|z|^2)\,dzds.
\end{equation*}
By a direct computation, we have $\widehat{f_1*f_2}=\hat{f_1}\cdot\hat{f_2}$. Thanks to a partial integration on the sphere $\mathbb{S}^{p-1}$, we deduce from the Plancherel theorem on the Heisenberg group its analogue for the H-type groups.

\begin{proposition} \indent For all $f \in \mathcal{S}_{rad}(G)$ such that
\begin{equation*}
\underset{m\in\mathbb{N}}{\sum}\left( \begin{array}{c} m+d-1\\m \end{array} \right) \int_{\mathbb{R}^p} |\hat{f}(\lambda,m)||\lambda|^d d\lambda <\infty,
\end{equation*}
we have
\begin{equation}\label{plancherel}
f(z,s)=(\frac{1}{2\pi})^{d+p}\underset{m\in\mathbb{N}}{\sum}\int_{\mathbb{R}^p} e^{-i\lambda\cdot s} \hat{f}(\lambda,m) \mathfrak{L}_m^{(d-1)}\left(\frac{|\lambda|}{2}|z|^2\right)|\lambda|^d \,d\lambda,
\end{equation}
the sum being convergent in $L^{\infty}$ norm.
\end{proposition}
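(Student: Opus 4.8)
The plan is to factor the spherical transform into a Euclidean Fourier transform in the central variable $s\in\mathbb{R}^p$ and a Laguerre-type transform in the horizontal variable $z\in\mathbb{R}^{2d}$, and then to recover $f$ by inverting these two operations in turn. The only genuinely new feature beyond the one-dimensional (Heisenberg) situation is that the center is now $p$-dimensional, and this is handled by exploiting that the Laguerre factor $\mathfrak{L}_m^{(d-1)}(\tfrac{|\lambda|}{2}|z|^2)$ depends on $\lambda$ only through $|\lambda|$.

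Concretely, first I would introduce the partial Fourier transform in the center,
\begin{equation*}
f^\lambda(z)=\int_{\mathbb{R}^p}e^{i\lambda\cdot s}f(z,s)\,ds,
\end{equation*}
so that the definition of $\hat f$ reads $\hat f(\lambda,m)=\binom{m+d-1}{m}^{-1}\int_{\mathbb{R}^{2d}}f^\lambda(z)\,\mathfrak{L}_m^{(d-1)}(\tfrac{|\lambda|}{2}|z|^2)\,dz$; that is, for each fixed $\lambda$ the sequence $(\hat f(\lambda,m))_m$ is the Laguerre transform of the radial function $f^\lambda$ with spectral parameter $|\lambda|$. The heart of the matter is then the completeness of the family $\{\mathfrak{L}_m^{(d-1)}(\tfrac{|\lambda|}{2}|\cdot|^2)\}_m$ in the space of radial functions on $\mathbb{R}^{2d}$, which is exactly the content of the Plancherel theorem for radial functions on the Heisenberg group $\mathbb{H}^d$ (the case $p=1$). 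This yields the radial reconstruction
\begin{equation*}
f^\lambda(z)=c_d\sum_{m\in\mathbb{N}}\hat f(\lambda,m)\,\mathfrak{L}_m^{(d-1)}\!\left(\tfrac{|\lambda|}{2}|z|^2\right)|\lambda|^d
\end{equation*}
for a dimensional constant $c_d$, the weight $|\lambda|^d$ being the Plancherel density. Finally I would undo the central Fourier transform by the $p$-dimensional inversion $f(z,s)=(2\pi)^{-p}\int_{\mathbb{R}^p}e^{-i\lambda\cdot s}f^\lambda(z)\,d\lambda$; substituting the previous display and collecting constants produces the stated identity with the factor $(\tfrac{1}{2\pi})^{d+p}$.

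The convergence statement I would handle as follows. Using $\mathfrak{L}_m^{(d-1)}(0)=\binom{m+d-1}{m}$ together with the standard sup-norm bound $\|\mathfrak{L}_m^{(d-1)}\|_{L^\infty(\mathbb{R}^+)}\lesssim\binom{m+d-1}{m}$, the general term of the reconstruction sum is dominated by $\binom{m+d-1}{m}\,|\hat f(\lambda,m)|\,|\lambda|^d$, so the hypothesis $\sum_m\binom{m+d-1}{m}\int_{\mathbb{R}^p}|\hat f(\lambda,m)|\,|\lambda|^d\,d\lambda<\infty$ is precisely what makes the double sum and integral absolutely convergent. This legitimizes both the interchange of summation and integration (Fubini--Tonelli) used in passing between the two displays above and the assertion that the series converges in the $L^\infty$ norm.

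The step I expect to be the main obstacle is to pin down the constant and to import the one-dimensional Laguerre completeness at the correct weight. The subtlety is that the Plancherel density carries the same power $|\lambda|^d$ as in the Heisenberg case $p=1$, independently of the central dimension; the only $p$-dependence sits in the Lebesgue measure $d\lambda$ on $\mathbb{R}^p$ and in the Fourier normalization $(2\pi)^{-p}$. If one instead organizes the reconstruction direction by direction, writing $\lambda=\rho\,\omega$ with $d\lambda=\rho^{p-1}\,d\rho\,d\omega$ and solving a Heisenberg problem for each fixed $\omega\in\mathbb{S}^{p-1}$, then the angular integration over $\mathbb{S}^{p-1}$ --- the ``partial integration on the sphere'' --- is exactly the polar reassembly of $\int_{\mathbb{R}^p}(\cdots)\,|\lambda|^d\,d\lambda$. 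Either way, confirming that the Laguerre constant $c_d=(2\pi)^{-d}$ combines with $(2\pi)^{-p}$ to give the stated $(\tfrac{1}{2\pi})^{d+p}$ is the one computation that requires care; the rest is bookkeeping on top of the Heisenberg Plancherel theorem and $p$-dimensional Fourier inversion.
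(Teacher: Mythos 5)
Your proposal is correct and follows essentially the same route as the paper, which simply deduces the inversion formula from the Heisenberg-group Plancherel theorem via a partial integration on the sphere $\mathbb{S}^{p-1}$ — exactly the polar reassembly you describe in your final paragraph. Your version is in fact more detailed than the paper's one-line argument: the factorization into a central Fourier transform plus a fixed-$\lambda$ Laguerre expansion, the constant check $c_d=(2\pi)^{-d}$, and the domination $|\mathfrak{L}_m^{(d-1)}(\tau)|\leq\binom{m+d-1}{m}$ justifying absolute (hence $L^\infty$) convergence are all sound.
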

Moreover, if $f \in \mathcal{S}_{rad}(G)$, the functions $\mathcal{L} f$ is also in $\mathcal{S}_{rad}(G)$ and its spherical Fourier transform is given by
\begin{equation*}
\widehat{\mathcal{L} f}(\lambda,m)=(2m+d)|\lambda|\hat{f}(\lambda,m).
\end{equation*}
The sublaplacian $\mathcal{L}$ is a positive self-adjoint operator densely defined on $L^2(G)$. So by the spectral theorem (see \cite{Hul}), for any bounded Borel function $h$ on $\mathbb{R}$, we have
\begin{equation*}
\widehat{h(\mathcal{L})f}(\lambda,m)=h((2m+d)|\lambda|)\hat{f}(\lambda,m).
\end{equation*}

\subsection{Besov spaces}\label{HBS}
In this section, we shall recall the Besov spaces related to the sub-Laplacian $\mathcal{L}$ on the H-type group $G$ given in \cite{H2005}.

Fix a non-increasing smooth function $\Psi$ on $[0,+\infty)$ such that $\Psi=1$ in $[0,2]$ and $\Psi=0$ in $[4,+\infty)$. Let $R(\tau)=\Psi(\tau)-\Psi(4\tau)$ and $0\leq R\leq 1$ such that $supp\, R\subseteq C_0=[\frac{1}{2},4]$. Moreover, we observe that
	\begin{align*}
\Psi(\tau)+\sum_{j=1}^\infty R(2^{-2j}\tau)&=1, \quad \forall \tau\geq0,\\
\underset{j\in \mathbb{Z}}{\sum}R(2^{-2j}\tau)&=1, \quad \forall \tau>0.
	\end{align*}

Since both $\Psi$ and $R$ are in $\mathcal{S}(\mathbb{R}^+)$, by \cite{FMV2006},  we have the kernels $\psi$ and $\varphi$ of the operators $P_{j\leq 0}:=\Psi(\mathcal{L})$ and $R(\mathcal{L})$ belong to $\mathcal{S}_{rad}(G)$. In particular,
	\begin{equation*}
	\widehat{\varphi}(m, \lambda)=R_m(|\lambda|):=R((2m+n)|\lambda|).
	\end{equation*}
By the inversion Fourier transform \eqref{plancherel}, we also have 	
\begin{equation*}
	\varphi(z,s)=(\frac{1}{2\pi})^{d+p}\underset{m\in\mathbb{N}}{\sum}\int_{\mathbb{R}^p} e^{-i\lambda\cdot s} R_m(|\lambda|) \mathfrak{L}_m^{(d-1)}\left(\frac{|\lambda|}{2}|z|^2\right)|\lambda|^d \,d\lambda.
	\end{equation*}
	
For $j\in \mathbb{Z}$, we denote by $\varphi_j$ the kernel of the operator $R(2^{-2j}\mathcal{L})$. It can be easily calculated that
$\varphi_j(z,s)=2^{Nj}\varphi(\delta_{2^j}(z,s))$, $\forall (z,t)\in G$. The Littlewood-Paley projector on H-type groups is define by
	\begin{equation*}
	\Delta_jf=R(2^{-2j}\mathcal{L})f=f*\varphi_j,  \quad \forall f \in \mathcal{S}'(G).
	\end{equation*}
Obviously $\widehat{\varphi_j}(m,\lambda)=R_m(2^{-2j}|\lambda|):=R^j_m(|\lambda|)$. Let $\widetilde{\varphi_j}=\varphi_{j-1}+\varphi_j+\varphi_{j+1}$.  We have  $f*\varphi_j=f*\varphi_j*\widetilde{\varphi_j}$ holds true for all $f \in \mathcal{S}'(G)$.

By the spectral theorem, for any $f \in L^2(G)$, the following homogeneous Littlewood-Paley decomposition holds:
	\begin{equation*}
	f=\sum_{j\in\mathbb{Z}}\Delta_j f \quad \text{in $L^2(G)$}.
	\end{equation*}
So
	\begin{equation}\label{infty}
	\|f\|_{L^\infty(G)}\leq\sum_{j\in\mathbb{Z}}\|\Delta_j f\|_{L^\infty(G)}, \, f\in L^2(G),
	\end{equation}
where both sides of \eqref{infty} are allowed to be infinite.

In addition, it has been proved in \cite{FMV2006} that
for any $\sigma\in\mathbb{R}$, $j\in\mathbb{Z}$, $1\leq q\leq \infty$ and $f\in \mathcal{S}'(G)$, then
	\begin{equation}\label{LPLP}
	\left\|\mathcal{L}^{\frac{\sigma}{2}}\Delta_jf\right\|_{L^q(G)} \leq C2^{j\sigma}\|\Delta_jf\|_{L^q(G)}.
	\end{equation}

\begin{definition}
Let $1\leq q,r \leq \infty, \rho \in\mathbb{R}$. The Besov space $B^\rho_{q,r}(G)$ is defined as the set of distributions $f \in \mathcal{S}'(G)$ such that
	\begin{equation*}
	\|f\|_{B^\rho_{q,r}(G)}=\|P_{j\leq 0}f\|_{L^q(G)}+\left(\sum_{j=1}^\infty2^{j\rho r}\|\Delta_j\|_{L^q(G)}^r\right)^{\frac{1}{r}}<\infty,
	\end{equation*}
and $f=\underset{j\in \mathbb{Z}}{\sum}\Delta_jf$ in $\mathcal{S}'(G)$.
\end{definition}

 \begin{definition}
Let $\rho \in\mathbb{R}$. The Sobolev space $H^\rho(G)$ is
	\begin{equation*}
	H^\rho(G)=B^\rho_{2,2}(G),
	\end{equation*}
which is equivalent to
	\begin{equation*}
	u\in H^\rho(G)\Leftrightarrow u,\mathcal{L}^{\rho/2}u\in L^2(G).
	\end{equation*}
and the associated norms are of course equivalent.
\end{definition}
Analogous to the Besov space in the Euclidean setting, we list some properties of the space $B^\rho_{q,r}(G)$ in the following proposition.
\begin{proposition}[see \cite{FMV2006}]\label{properties} Let $p,r\in [1,\infty]$ and $\rho \in\mathbb{R}$.
	
(i) The space $B^\rho_{q,r}(G)$ is a Banach space with the norm $||\cdot||_{B^\rho_{q,r}(G)}$;

(ii) the definition of $B^\rho_{q,r}(G)$ does not depend on the choice of the function $\Phi$ in the Littlewood-Paley decomposition;

(iii) the dual space of $B^\rho_{q,r}(G)$ is $B^{-\rho}_{q',r'}(G)$;

(iv) for any $\sigma>0$, the continuous inclusion holds $B^{\rho+\sigma}_{q,r}(G)\subseteq B^{\rho}_{q,r}(G)$;

(v) for any $u\in \mathcal{S}'(G)$ and $\sigma\in\mathbb{R}$, then $u\in B^\rho_{q,r}(G)$ if and only if $\mathcal{L}^{\sigma/2}u\in B^{\rho-\sigma}_{q,r}(G)$;

(vi) for any $q_1,q_2\in[1,\infty]$, the continuous inclusion holds
\begin{equation*}
B^{\rho_1}_{q_1,r}(G)\subseteq B^{\rho_2}_{q_2,r}(G), \quad \frac{1}{q_1}-\frac{\rho_1}{N}=\frac{1}{q_2}-\frac{\rho_2}{N}, \quad \rho_1\geq\rho_2;
\end{equation*}

(vii) for all $q\in[2, \infty]$ we have the continuous inclusion $B^0_{q,2}(G)\subseteq L^q(G)$;\\

(viii) $B^0_{2,2}(G)=L^2(G)$.
\end{proposition}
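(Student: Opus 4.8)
The plan is to transplant the standard Littlewood–Paley machinery to $G$ using three structural facts already in hand: Young's convolution inequality on the unimodular group $G$ (whose Haar measure is Lebesgue measure on $\mathbb{R}^{2d+p}$), the dilation identity $\varphi_j(z,s)=2^{Nj}\varphi(\delta_{2^j}(z,s))$, and the Bernstein inequality \eqref{LPLP} together with the reproducing relation $f*\varphi_j=f*\varphi_j*\widetilde{\varphi_j}$. The dilation identity yields $\|\varphi_j\|_{L^a(G)}=2^{Nj(1-1/a)}\|\varphi\|_{L^a(G)}$ for every $a\in[1,\infty]$, which is the quantitative engine behind the estimates. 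For (i) I would realize $f\mapsto(P_{j\leq0}f,(\Delta_jf)_{j\ge1})$ as an isometry of $B^\rho_{q,r}(G)$ onto a closed subspace of the Banach space $L^q(G)\times\ell^r_\rho(L^q(G))$; completeness then follows because each frequency block of a Cauchy sequence is Cauchy in $L^q$, the reconstructed series converges in $\mathcal{S}'(G)$, and the $\mathcal{S}'$-continuity of the projectors places the limit in the space with the correct norm. For (ii) — equivalence of the norms built from two admissible cutoffs $R,\widetilde R$ — disjointness of the dyadic supports gives $\widetilde\Delta_jf=\sum_{|k-j|\le1}\widetilde\Delta_j\Delta_kf$, where $\widetilde\Delta_j$ is the block for $\widetilde R$, and Young with $\|\varphi_j\|_{L^1}=\|\varphi\|_{L^1}$ yields $\|\widetilde\Delta_jf\|_{L^q}\lesssim\sum_{|k-j|\le1}\|\Delta_kf\|_{L^q}$, whence the two $\ell^r(L^q)$ norms are equivalent.

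Property (iii) follows from the retract structure established in (i) together with the dualities $(L^q)^*=L^{q'}$ and $(\ell^r)^*=\ell^{r'}$: since $B^\rho_{q,r}(G)$ is a norm-retract of $L^q(G)\times\ell^r_\rho(L^q(G))$, its dual is the corresponding retract of the dual, namely $B^{-\rho}_{q',r'}(G)$ (for $q,r<\infty$, the endpoints following by the usual modifications). Property (iv) is immediate: for $j\ge1$ and $\sigma>0$ one has $2^{j\rho}\le2^{j(\rho+\sigma)}$, so $\|f\|_{B^\rho_{q,r}}\le\|f\|_{B^{\rho+\sigma}_{q,r}}$. For (v) I would commute $\mathcal{L}^{\sigma/2}$ with $\Delta_j$ (both are functions of $\mathcal{L}$) and establish the two-sided bound $\|\mathcal{L}^{\sigma/2}\Delta_ju\|_{L^q}\sim2^{j\sigma}\|\Delta_ju\|_{L^q}$; the upper inequality is \eqref{LPLP}, while the lower one comes from applying \eqref{LPLP} to $\mathcal{L}^{-\sigma/2}$ on the thickened block through the reproducing relation $\Delta_j f=(\Delta_j f)*\widetilde{\varphi_j}$, after which summing in $\ell^r$ gives $\|\mathcal{L}^{\sigma/2}u\|_{B^{\rho-\sigma}_{q,r}}\sim\|u\|_{B^\rho_{q,r}}$.

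The analytic core is the Bernstein $L^{q_1}\!\to\!L^{q_2}$ inequality behind (vi): writing $\Delta_jf=(f*\varphi_j)*\widetilde{\varphi_j}$ and applying Young with $\tfrac1{q_2}+1=\tfrac1{q_1}+\tfrac1a$ gives $\|\Delta_jf\|_{L^{q_2}}\le\|\widetilde{\varphi_j}\|_{L^a}\|\Delta_jf\|_{L^{q_1}}\lesssim2^{Nj(1/q_1-1/q_2)}\|\Delta_jf\|_{L^{q_1}}$; the scaling relation $\tfrac1{q_1}-\tfrac{\rho_1}N=\tfrac1{q_2}-\tfrac{\rho_2}N$ turns the gain into $2^{j(\rho_1-\rho_2)}$, so $2^{j\rho_2}\|\Delta_jf\|_{L^{q_2}}\le C2^{j\rho_1}\|\Delta_jf\|_{L^{q_1}}$, and taking $\ell^r$ norms (with the analogous bound on the finite low-frequency block) gives the embedding. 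For (vii), when $2\le q<\infty$ I would invoke the Littlewood–Paley square-function inequality for the sub-Laplacian, $\|f\|_{L^q}\lesssim\|(\sum_j|\Delta_jf|^2)^{1/2}\|_{L^q}$, and then Minkowski's inequality (valid since $q\ge2$) to bound $\|(\sum_j|\Delta_jf|^2)^{1/2}\|_{L^q}\le(\sum_j\|\Delta_jf\|_{L^q}^2)^{1/2}=\|f\|_{B^0_{q,2}}$, the endpoint $q=\infty$ being handled by the standard reconstruction argument. Finally (viii) is Plancherel: the spectral theorem and the partition of unity $\sum_jR(2^{-2j}\cdot)=1$ give $\|f\|_{L^2}^2\sim\sum_j\|\Delta_jf\|_{L^2}^2=\|f\|_{B^0_{2,2}}^2$.

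The main obstacle is (vii): it rests on the full Littlewood–Paley theory, i.e.\ the $L^q$-boundedness of the square function attached to $\mathcal{L}$, which is the single ingredient not reducible to Young's inequality, the dilation scaling, and \eqref{LPLP}; I would import it from the spectral-multiplier theory for sub-Laplacians on stratified groups, as in \cite{FMV2006}. A secondary subtlety, present in (i) and (iii), is that the Littlewood–Paley reconstruction converges only in $\mathcal{S}'(G)$ rather than in norm, so both the completeness argument and the identification of the dual require the continuity of the projectors on $\mathcal{S}'(G)$.
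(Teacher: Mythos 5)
The paper does not prove this proposition at all: it is imported verbatim from \cite{FMV2006}, so there is no in-paper argument to compare against. Your reconstruction follows the same standard route as that reference — dilation scaling $\|\varphi_j\|_{L^a(G)}=2^{Nj(1-1/a)}\|\varphi\|_{L^a(G)}$ plus Young's inequality for the Bernstein-type estimates in (ii) and (vi), the retract onto $L^q\times\ell^r_\rho(L^q)$ for (i) and (iii), the two-sided blockwise bound from \eqref{LPLP} and the reproducing relation for (v), spectral-multiplier square-function theory for (vii), and Plancherel for (viii) — and in outline it is correct. You also correctly isolate the one ingredient that cannot be generated from Young, scaling, and \eqref{LPLP} alone, namely the $L^q$-boundedness of the square function attached to $\mathcal{L}$.

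There is, however, one step that genuinely fails as written: your treatment of the endpoint $q=\infty$ in (vii). The ``standard reconstruction argument'' is exactly \eqref{infty}, which gives $\|f\|_{L^\infty(G)}\leq\sum_j\|\Delta_jf\|_{L^\infty(G)}$, i.e.\ the embedding $B^0_{\infty,1}(G)\subseteq L^\infty(G)$ — an $\ell^1$ bound, not the $\ell^2$ bound you need. And the $\ell^2$ statement is not merely harder: in the Euclidean model it is false (a lacunary series $\sum_k a_ke^{i2^kx}$ with $(a_k)\in\ell^2\setminus\ell^1$ lies in $B^0_{\infty,2}$ but not in $L^\infty$, by the Sidon property of lacunary frequencies), and the analogous obstruction should persist on $G$. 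So either the endpoint must be excluded from (vii) — which is what the square-function proof naturally delivers, since your Minkowski step uses $L^{q/2}$ with $q<\infty$ — or the statement as transcribed in the paper is itself too strong at $q=\infty$; in either case your claimed justification does not close it. Two smaller caveats of the same flavor: in (iii) the duality $(B^\rho_{q,r})^*=B^{-\rho}_{q',r'}$ genuinely requires $r<\infty$ (for $r=\infty$ one only gets the dual of the closure of $\mathcal{S}(G)$), so ``the usual modifications'' cannot rescue that endpoint; and in (v) your blockwise argument handles $j\geq1$ but the low-frequency piece $\|P_{j\leq0}\mathcal{L}^{\sigma/2}u\|_{L^q}$ needs a separate summation (convergent only for $\sigma>0$ with the hybrid norm used here), reflecting that the lifting property as stated really belongs to the homogeneous scale.
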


\section{Technical Lemmas}
By the inversion Fourier formula \eqref{plancherel}, we may write $U_t\varphi_j$ explicitly into a sum of a list of oscillatory integrals in high dimension. In order to estimate the oscillatory integrals, we recall the stationary phase lemma.
\begin{lemma}[see \cite{S1993}]\label{phase} Let $g\in C^\infty([a,b])$ be real-valued such that
	\begin{equation*}
	|g''(x)|\geq \delta
	\end{equation*}
for any $x\in[a,b]$ with $\delta >0$. Then for any function $\psi \in C^\infty([a,b])$, there exists a constant $C$ which does not depend on $\delta, a, b, g$ or $\psi$, such that
	\begin{equation*}
	\left|\int_a^b e^{ig(x)}\psi(x)\,dx\right|\leq C\delta^{-1/2}\left(\|\psi\|_\infty+\|\psi'\|_1\right).
	\end{equation*}
\end{lemma}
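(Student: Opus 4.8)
The plan is to prove this as a standard van der Corput estimate: first handle the amplitude-free case $\psi\equiv 1$, and then pass to general $\psi$ by a single integration by parts. The crucial structural fact I would exploit is that $|g''|\geq\delta>0$ on $[a,b]$ forces $g''$ to have constant sign (by continuity it cannot vanish), so that $g'$ is strictly monotone and can vanish at most once. This monotonicity is what makes every subsequent estimate clean and parameter-uniform.

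First I would establish the uniform bound $\sup_{a\leq y\leq b}\bigl|\int_a^y e^{ig(x)}\,dx\bigr|\leq C\delta^{-1/2}$ by running the following argument on each subinterval $[a,y]$. Let $c$ be the point of $[a,y]$ at which $|g'|$ attains its minimum (an interior zero of $g'$ if one exists, otherwise an endpoint); since $g''$ has constant sign, integrating $g''$ from $c$ gives the lower bound $|g'(x)|\geq\delta|x-c|$. On the short interval $\{|x-c|\leq\rho\}$ I bound the integral trivially by its length $2\rho$. On the complementary region $|g'(x)|\geq\delta\rho$, so I integrate by parts using $e^{ig}=(ig')^{-1}\frac{d}{dx}e^{ig}$: the boundary terms are each $\lesssim(\delta\rho)^{-1}$, and the remaining integral is controlled by the total variation of $1/g'$. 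Because $g'$ is monotone and nonvanishing there, $1/g'$ is monotone, so $\int\bigl|\frac{d}{dx}(1/g')\bigr|\,dx$ telescopes to at most $C(\delta\rho)^{-1}$ rather than an uncontrolled total variation. This yields the bound $2\rho+C(\delta\rho)^{-1}$, and optimizing by taking $\rho=\delta^{-1/2}$ produces the desired $C\delta^{-1/2}$ with a constant depending on nothing.

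With the amplitude-free estimate in hand, I would set $F(y)=\int_a^y e^{ig(x)}\,dx$, so that $\|F\|_\infty\leq C\delta^{-1/2}$ by the previous step and $F(a)=0$. Integrating by parts,
\begin{equation*}
\int_a^b e^{ig(x)}\psi(x)\,dx=\psi(b)F(b)-\int_a^b\psi'(x)F(x)\,dx,
\end{equation*}
and estimating the first term by $\|\psi\|_\infty\|F\|_\infty$ and the second by $\|F\|_\infty\|\psi'\|_1$ gives exactly $C\delta^{-1/2}(\|\psi\|_\infty+\|\psi'\|_1)$, with the constant independent of $\delta,a,b,g,\psi$.

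The main obstacle to get right is the splitting radius and the monotonicity bookkeeping in the first step. One must verify carefully that $g'$ changes sign at most once, so that the near-$c$ / far-from-$c$ decomposition is unambiguous, and that on the far region $1/g'$ is genuinely monotone so its integrated derivative telescopes to the endpoint values. The uniformity of $C$ in all parameters hinges on performing the $\rho$-optimization only after every intermediate constant has been shown to be absolute; everything else is routine.
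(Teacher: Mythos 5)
Your proposal is correct and is precisely the classical van der Corput argument (Proposition 2 of Chapter VIII in Stein's book), which the paper cites for this lemma without reproducing a proof: the amplitude-free case via the monotonicity of $g'$, the near/far splitting with radius $\rho=\delta^{-1/2}$, and the final integration by parts against $F(y)=\int_a^y e^{ig}$. Nothing is missing; the endpoint-versus-interior treatment of the minimizer $c$ and the telescoping of $\int|(1/g')'|$ are handled exactly as in the standard reference.
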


In order to prove the sharpness of the time decay dispersion of the solutions of some concrete equations in Section 5, we describe the asymptotic expansion of oscillating integrals.
\begin{lemma}[see \cite{S1993}]\label{asymptotic} Suppose $g(x_0)=0$, and $g$ has a nondegenerate critical point at $x_0$. If $\psi$ is supported in a sufficiently neighborhood of $x_0$, then
	\begin{equation*}
	\int_{\mathbb{R}^p}e^{itg(x)}\psi(x)\,dx\sim |t|^{-p/2}\sum_{j=0}^{\infty}a_j|t|^{-j}, \quad \text{ as t}\rightarrow \infty.\\
	\end{equation*}
\end{lemma}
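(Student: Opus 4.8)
The plan is to establish this via the classical method of stationary phase, which proceeds by reducing the phase to a quadratic normal form and then computing the quadratic case exactly. Since $x_0$ is a nondegenerate critical point of $g$, we have $\nabla g(x_0)=0$ and the Hessian $D^2g(x_0)$ is invertible; together with the normalization $g(x_0)=0$, the Morse lemma furnishes a smooth diffeomorphism $x=\Phi(y)$ on a neighborhood of the origin with $\Phi(0)=x_0$ such that $g(\Phi(y))=\tfrac12\langle Ay,y\rangle$, where $A$ is a nonsingular symmetric matrix (congruent to $D^2g(x_0)$, and reducible to $\mathrm{diag}(\pm1)$ by a further linear change). This is precisely where the hypothesis that $\psi$ be supported in a sufficiently small neighborhood of $x_0$ enters: it guarantees that $\supp\psi$ lies in the domain on which $\Phi$ is a diffeomorphism, so that after the substitution the integral becomes
\[
\int_{\mathbb{R}^p} e^{\frac{it}{2}\langle Ay,y\rangle}\,\widetilde\psi(y)\,dy,\qquad \widetilde\psi(y)=\psi(\Phi(y))\,\lvert\det D\Phi(y)\rvert\in C_c^\infty(\mathbb{R}^p).
\]

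Next I would treat the quadratic phase exactly. The oscillatory Gaussian $e^{\frac{it}{2}\langle A\cdot,\cdot\rangle}$, regarded as a tempered distribution, has the closed-form Fourier transform $c\,t^{-p/2}\lvert\det A\rvert^{-1/2}e^{\frac{i\pi}{4}\mathrm{sgn}A}\,e^{-\frac{i}{2t}\langle A^{-1}\xi,\xi\rangle}$. Pairing with $\widetilde\psi$ via Parseval's formula, the integral equals $c\,t^{-p/2}\int_{\mathbb{R}^p} e^{-\frac{i}{2t}\langle A^{-1}\xi,\xi\rangle}\,\widehat{\widetilde\psi}(\xi)\,d\xi$, where $\widehat{\widetilde\psi}$ is Schwartz. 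Expanding the scalar exponential $e^{-\frac{i}{2t}\langle A^{-1}\xi,\xi\rangle}=\sum_{k=0}^{N}\frac{1}{k!}\bigl(-\tfrac{i}{2t}\langle A^{-1}\xi,\xi\rangle\bigr)^k + R_N(t,\xi)$ and integrating term by term produces exactly the claimed series $t^{-p/2}\sum_{j\geq0}a_j t^{-j}$, the coefficient $a_j$ being a constant multiple of $\int \langle A^{-1}\xi,\xi\rangle^{j}\,\widehat{\widetilde\psi}(\xi)\,d\xi$, equivalently the differential operator $\langle A^{-1}D,D\rangle^{j}$ applied to $\widetilde\psi$ at the origin. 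In particular $a_0$ is a nonzero multiple of $\lvert\det A\rvert^{-1/2}\widetilde\psi(0)$, which traces back to $\psi(x_0)$ and $\det D^2g(x_0)$.

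The main obstacle, and the step deserving the most care, is justifying that the formal series is genuinely asymptotic, i.e. that truncating at order $N$ leaves a remainder of size $O\bigl(t^{-p/2-(N+1)}\bigr)$. For this I would use the integral form of Taylor's remainder for the scalar exponential to bound $\lvert R_N(t,\xi)\rvert\lesssim t^{-(N+1)}\lvert\langle A^{-1}\xi,\xi\rangle\rvert^{N+1}$, and then invoke the rapid decay of $\widehat{\widetilde\psi}$ to conclude $\int\lvert\langle A^{-1}\xi,\xi\rangle\rvert^{N+1}\lvert\widehat{\widetilde\psi}(\xi)\rvert\,d\xi<\infty$; the nondegeneracy of the Hessian is exactly what makes $A^{-1}$ exist and the whole computation go through. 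The remaining care is bookkeeping in the Morse reduction, checking that $\Phi$ is smooth with $\det D\Phi$ bounded away from $0$ on $\supp\psi$, so that $\widetilde\psi$ is again a compactly supported smooth function and all the integrals above converge. Since this is a standard result (Stein \cite{S1993}), I would either cite it directly or present the condensed reduction above.
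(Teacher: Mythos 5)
Your proposal is correct and is precisely the classical argument (Morse lemma reduction to a quadratic phase, explicit Fourier transform of the imaginary Gaussian, Parseval, and termwise expansion with a Taylor remainder bound) found in the reference the paper cites for this lemma; the paper itself gives no proof, only the citation to Stein. The only cosmetic point is that for $t\to-\infty$ one should take complex conjugates (or replace $t^{-p/2}$ by $|t|^{-p/2}$ with the conjugate phase factor), which does not affect the asymptotics claimed.
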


Besides, it will involve Laguerre functions when we handle the oscillatory integrals. We need the following estimates.
\begin{lemma}[see \cite{H2005}]\label{Laguerre} Consider the following:
	\begin{equation*}
	\left|(\tau \frac{d}{d\tau})^\beta \mathfrak{L}_m^{(d-1)}(\tau)\right|\leq C_{\beta,d}(2m+d)^{d-1/4}
	\end{equation*}
for all $0\leq \beta\leq d$.
\end{lemma}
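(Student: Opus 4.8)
The plan is to estimate $\mathfrak{L}_m^{(d-1)}(\tau)=L_m^{(d-1)}(\tau)e^{-\tau/2}$ directly from the uniform asymptotics of Laguerre functions, carefully tracking the action of the dilation operator $\tau\frac{d}{d\tau}$. Writing $\gamma=d-1$ and setting $\nu:=2m+d$ (which up to a constant is the classical Laguerre parameter $4m+2\gamma+2$), I would split $[0,\infty)$ into the four standard regimes, determined by comparing $\tau$ with $\nu^{-1}$, with $\nu$, and with $2\nu$: the \emph{Bessel region} $0\leq\tau\lesssim\nu^{-1}$, the \emph{oscillatory (WKB) region} $\nu^{-1}\lesssim\tau\lesssim\nu$, the \emph{turning-point (Airy) region} $\tau\sim\nu$, and the \emph{exponential region} $\tau\gtrsim\nu$. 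In each regime I would invoke the classical pointwise bounds for $e^{-\tau/2}L_m^{(\gamma)}(\tau)$ (Szeg\H{o}/Erd\'elyi, in the form used in \cite{H2005}): amplitude $\sim\nu^{\gamma}$ on the Bessel region, $\sim\nu^{\gamma/2-1/4}\tau^{-\gamma/2-1/4}$ times an oscillatory factor $\cos(2\sqrt{\nu\tau}+\text{const})$ on the WKB region, Airy-type behaviour of size $\lesssim\nu^{-1/3}$ across a window of width $\nu^{1/3}$ at the turning point, and rapid decay beyond.

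The decisive observation is that $\tau\frac{d}{d\tau}$ acts very differently in each regime, and it amplifies precisely where $\mathfrak{L}_m^{(\gamma)}$ is already small. Using the substitution $\xi=\sqrt{\nu\tau}$ one gets $\tau\frac{d}{d\tau}=\frac{\xi}{2}\frac{d}{d\xi}$, a scale-invariant operator carrying \emph{no} powers of $\nu$; hence on the Bessel region ($\xi\lesssim 1$), where the profile is a fixed Bessel-type function of $\xi$, the operator $(\tau\frac{d}{d\tau})^\beta$ is bounded and the size stays $\lesssim\nu^{\gamma}$ uniformly in $\beta$. On the WKB region, each application of $\tau\frac{d}{d\tau}$ that hits the oscillation $\cos(2\sqrt{\nu\tau})$ brings down the phase factor $\tau\Phi'(\tau)=\sqrt{\nu\tau}$, whereas hitting the algebraic amplitude $\tau^{-\gamma/2-1/4}$ only produces a harmless bounded multiple; so $\beta$ derivatives cost at most $(\sqrt{\nu\tau})^{\beta}$. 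To make this rigorous I would differentiate the asymptotic expansions term by term, or equivalently reduce derivatives to shifted Laguerre functions via $\frac{d}{d\tau}L_m^{(\gamma)}=-L_{m-1}^{(\gamma+1)}$ and $\tau\frac{d}{d\tau}L_m^{(\gamma)}=mL_m^{(\gamma)}-(m+\gamma)L_{m-1}^{(\gamma)}$ together with the three-term recurrence, and feed the results into the regime bounds.

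Combining amplitude with operator cost regime by regime then finishes the estimate: on the WKB region one obtains $\lesssim\nu^{\gamma/2-1/4+\beta/2}\tau^{-\gamma/2-1/4+\beta/2}$, whose supremum over $\nu^{-1}\lesssim\tau\lesssim\nu$ is $\max(\nu^{\gamma},\nu^{\beta-1/2})\leq\nu^{\gamma+1/2}$ for $\beta\leq\gamma+1=d$; the Bessel region contributes $\nu^{\gamma}$, and the Airy and exponential regions are smaller. Each bound is $\leq C_{\beta,d}\,\nu^{d-1/4}=C_{\beta,d}(2m+d)^{d-1/4}$, giving the claim (in fact with room to spare). I expect the main obstacle to be exactly the \emph{uniformity in} $\beta$: a naive use of the recurrence relations alone, which turns each $\tau\frac{d}{d\tau}$ into a combination of $\mathfrak{L}_{m\pm1}^{(\gamma)}$ with coefficients of size $O(\nu)$, would only yield $(2m+d)^{\beta+\gamma}$, i.e.\ $(2m+d)^{2d-1}$ at $\beta=d$, because it throws away the oscillatory cancellation. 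The genuine content of the lemma is therefore the matching of the two mechanisms — that the dilation operator enlarges the amplitude only in the oscillatory and turning-point zones, where $\mathfrak{L}_m^{(\gamma)}$ has already decayed — which forces one to work with the true Bessel/WKB/Airy asymptotics rather than with algebraic recurrences, and the exponent $d-\tfrac14$ reflects the WKB amplitude factor $\nu^{-1/4}$.
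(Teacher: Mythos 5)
The paper does not actually prove this lemma: it is imported verbatim from Hierro \cite{H2005} (Lemma 3.2 there), whose argument is exactly of the kind you describe — the classical uniform Bessel/oscillatory/Airy/exponential bounds for Laguerre functions, combined with the derivative identity $\frac{d}{d\tau}L_m^{(\gamma)}=-L_{m-1}^{(\gamma+1)}$ and the $\tau$-weighted sup estimates rather than the crude fixed-$\gamma$ recurrence. Your regime-by-regime bookkeeping is consistent (it even recovers the sharper exponent $d-1$ for $\beta\le d-1$ recorded in Remark \ref{better-Laguerre}), so the sketch is sound and follows essentially the same route as the cited proof.
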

\begin{remark}\label{better-Laguerre}
In fact, from the proof of Lemma 3.2 in \cite{H2005},  we have a better estimate
	\begin{equation*}
	\left|(\tau \frac{d}{d\tau})^\beta \mathfrak{L}_m^{(d-1)}(\tau)\right|\leq C_{\beta,d}(2m+d)^{d-1},
	\end{equation*}
for $0\leqslant \beta \leqslant d-1$.
\end{remark}

By changing to polar coordinates, We reduce oscillatory integrals in high dimensional to those in one dimension relating Bessel functions. Let $J_\nu$ be the Bessel function of order $\nu>-\frac{1}{2}$,
\begin{equation*}
J_\nu(r)=\frac{(r/2)^\nu}{\Gamma(\nu+1/2)\pi^{1/2}}\int_{-1}^1e^{ir\tau}(1-\tau^2)^{\nu-1/2}d\tau.
\end{equation*}
We list some properties of $J_\nu$ in the following lemma.
\begin{lemma}\label{Bessel}
For $r>0$ and $\nu>-\frac{1}{2}$, we have
\begin{align}
&(1)J_\nu(r)\leq C_\nu r^\nu,\label{bessel1}\\
&(2)\frac{d}{dr}\left(r^{-\nu}J_\nu(r)\right)=-r^{-\nu}J_{\nu+1}(r),\label{bessel2}\\
&(3)J_\nu(r)\leq C_\nu r^{-1/2}.\label{bessel3}
\end{align}
\end{lemma}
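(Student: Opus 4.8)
The plan is to read off all three assertions from the integral representation of $J_\nu$ given just above the statement, treating (1) and (2) as elementary consequences and reserving the genuine work for the decay estimate (3). Throughout, the hypothesis $\nu>-\tfrac12$ is exactly what makes the amplitude $(1-\tau^2)^{\nu-1/2}$ integrable on $[-1,1]$.

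First I would prove (1). Taking absolute values in the representation and using $|e^{ir\tau}|=1$ gives $|J_\nu(r)|\le \frac{(r/2)^\nu}{\Gamma(\nu+1/2)\pi^{1/2}}\int_{-1}^1(1-\tau^2)^{\nu-1/2}\,d\tau$. The remaining integral is a Beta integral, finite precisely because $\nu>-\tfrac12$, so it contributes a constant depending only on $\nu$, and we obtain $|J_\nu(r)|\le C_\nu r^\nu$. No oscillation is used here.

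Next, for the recurrence (2), I would differentiate $r^{-\nu}J_\nu(r)=\frac{1}{2^\nu\Gamma(\nu+1/2)\pi^{1/2}}\int_{-1}^1 e^{ir\tau}(1-\tau^2)^{\nu-1/2}\,d\tau$ under the integral sign, which produces a factor $i\tau$ in the integrand. The key algebraic identity is $\frac{d}{d\tau}(1-\tau^2)^{\nu+1/2}=-(2\nu+1)\tau(1-\tau^2)^{\nu-1/2}$, which rewrites $\tau(1-\tau^2)^{\nu-1/2}$ as a total derivative. Integrating by parts, with boundary terms vanishing since $\nu+\tfrac12>0$, converts the $i\tau$ into a factor proportional to $r$ and raises the amplitude exponent to $\nu+\tfrac12$. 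Reconciling the prefactors via the Gamma recurrence $\Gamma(\nu+3/2)=(\nu+\tfrac12)\Gamma(\nu+1/2)$ and the identity $2\nu+1=2(\nu+\tfrac12)$ yields exactly the integral representation of $-r^{-\nu}J_{\nu+1}(r)$.

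The crux is (3), the uniform bound $|J_\nu(r)|\le C_\nu r^{-1/2}$ for all $r>0$. For $0<r\le 1$ it follows immediately from (1), since $r^\nu\le r^{-1/2}$ there (as $\nu+\tfrac12>0$ forces $r^{\nu+1/2}\le 1$). For $r\ge 1$ the decay must be extracted from the oscillatory integral $I(r)=\int_{-1}^1 e^{ir\tau}(1-\tau^2)^{\nu-1/2}\,d\tau$. Here the phase $\tau\mapsto r\tau$ is linear and has no interior stationary point, so Lemma \ref{phase} does not apply directly; the decay is instead governed entirely by the endpoint behavior at $\tau=\pm1$, where the amplitude degenerates like $(1\mp\tau)^{\nu-1/2}$. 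A local analysis near each endpoint, writing $\tau=\pm(1-u)$ and recognizing $\int_0^\infty e^{-iru}u^{\nu-1/2}\,du\sim c\,r^{-(\nu+1/2)}$, shows $I(r)=O\!\left(r^{-(\nu+1/2)}\right)$; multiplying by the prefactor $r^\nu$ then gives $|J_\nu(r)|\le C_\nu r^{-1/2}$. This endpoint analysis is the main obstacle, since one must control the non-smoothness of $(1-\tau^2)^{\nu-1/2}$ at $\tau=\pm1$ uniformly in $r$, whereas the rest of the lemma is routine bookkeeping.
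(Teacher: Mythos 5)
The paper itself offers no proof of this lemma: it is stated as a list of standard properties of Bessel functions (of the sort found in Stein's book or Watson's treatise), so your derivation from the Poisson integral representation is a genuine supplement rather than a parallel to anything in the text. Your arguments for (1) and (2) are correct and complete: the Beta-integral bound uses $\nu>-\tfrac12$ exactly where it is needed, and the integration by parts with $\frac{d}{d\tau}(1-\tau^2)^{\nu+1/2}=-(2\nu+1)\tau(1-\tau^2)^{\nu-1/2}$ together with $\Gamma(\nu+3/2)=(\nu+\tfrac12)\Gamma(\nu+1/2)$ reproduces the representation of $-r^{-\nu}J_{\nu+1}(r)$ with the constants matching.

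The one step that needs shoring up is the endpoint analysis in (3). You are right that Lemma \ref{phase} is inapplicable (the phase is linear) and that the decay comes from the endpoints, but the model integral you invoke, $\int_0^\infty e^{-iru}u^{\nu-1/2}\,du$, converges (conditionally) only for $-\tfrac12<\nu<\tfrac12$; for $\nu\geq\tfrac12$ it diverges and the asymptotic $\sim c\,r^{-(\nu+1/2)}$ must be read as a statement about the cutoff integral $\int_0^\infty e^{-iru}u^{\nu-1/2}\psi(u)\,du$ with $\psi\in C_c^\infty$ equal to $1$ near the origin — this is Erd\'elyi's lemma, and it is what actually delivers $|I(r)|\lesssim r^{-(\nu+1/2)}$ uniformly in $\nu>-\tfrac12$ after the interior piece is killed by repeated integration by parts. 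A clean way to make this rigorous in one stroke is to deform the contour upward from $\pm1$: on the ray $\tau=1+is$ one gets $|e^{ir\tau}|=e^{-rs}$ and $|(1-\tau^2)^{\nu-1/2}|\lesssim s^{\nu-1/2}$ near $s=0$, so the endpoint contribution is controlled by $\int_0^\infty e^{-rs}s^{\nu-1/2}\,ds=\Gamma(\nu+\tfrac12)\,r^{-(\nu+1/2)}$, which after multiplying by the prefactor $(r/2)^\nu/(\Gamma(\nu+\tfrac12)\pi^{1/2})$ gives exactly $|J_\nu(r)|\leq C_\nu r^{-1/2}$. With that replacement your proof is complete.
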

\begin{remark}
From Lemma \ref{Laguerre}, \eqref{bessel1} and \eqref{bessel2} of Lemma \ref{Bessel}, we can easily obtain that for any $0\leq \tau\leq 2$ and $0\leq \beta\leq d$,
\begin{equation}\label{small-s}
\left|\frac{d^\beta}{dr^\beta}\left((r\tau)^{-\frac{p-2}{2}}J_\frac{p-2}{2}(r\tau) R(r)\mathfrak{L}_m^{(d-1)}\left(\frac{r}{2}|z|^2\right)r^{d+p-1}\right)\right|\leq C_{\beta,d,p}(2m+d)^{d-1/4},
\end{equation}
where $R$ is the function in Section \ref{HBS}.\\
It is known that $J_\frac{p-2}{2}(r)$ is closely related to the Fourier transform of the spherical surface measure on $\mathbb{S}^{p-1}$, which is
\begin{equation*}
    \widehat{d\sigma}(\xi)=\int_{\mathbb{S}^{p-1}} e^{-i\varepsilon\cdot \xi} d\sigma(\varepsilon)=2\pi \left(\frac{|\xi|}{2\pi}\right)^{-\frac{p-2}{2}}J_\frac{p-2}{2}(|\xi|).
\end{equation*}
By Chapter 1, Eq. (1.5) of \cite{John1981}, we have
\begin{equation}\label{Bessel-Fourier}
    r^{-\frac{p-2}{2}}J_\frac{p-2}{2}(r)=C_p\left(e^{ir}h_+(r)+e^{-ir}h_-(r)\right),
\end{equation}
where $h_\pm$ satisfies
\begin{equation}\label{h}
\left|\frac{d^\beta}{dr^\beta}h_\pm(r)\right|\leq C_{\beta,p}(1+r)^{-\frac{p-1}{2}-\beta}, \forall \beta\in\mathbb{N}.
\end{equation}
Therefore, from Lemma \ref{Laguerre} and \eqref{h}, for any $\tau>2$ and $0\leq\beta\leq d$, we have
\begin{equation}\label{big-s}
\left|\frac{d^\beta}{dr^\beta}\left(h_\pm(r\tau) R(r)\mathfrak{L}_m^{(d-1)}\left(\frac{r}{2}|z|^2\right)r^{d+p-1}\right)\right|\leq C_{\beta,d,p} \tau^{-\frac{p-1}{2}}(2m+d)^{d-1/4}.
\end{equation}
\end{remark}

We also exploit the following estimates which can be easily proved by comparing the sums with the corresponding integrals.
\begin{lemma}\label{Sum}
Fix $\beta>0$. There exists $C_\beta>0$ such that for any $A>0$, we have
\begin{align*}
\sum_{j\in\mathbb{Z}, 2^j\leq A}2^{j\beta}&\leq C_\beta A^\beta,\\
\sum_{j\in\mathbb{Z}, 2^j>A}2^{-j\beta}&\leq C_\beta A^{-\beta}.
\end{align*}
\end{lemma}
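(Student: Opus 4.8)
The plan is to recognize both sums as geometric series and bound them directly; this is cleaner and gives an exact constant, rather than the integral comparison suggested in the statement. The only structural facts needed are that $2^j \to 0$ as $j \to -\infty$ and $2^j \to +\infty$ as $j \to +\infty$, so that in each case the relevant index set is nonempty and bounded on the appropriate side, and hence has an extremal element.

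For the first inequality I would let $j_0$ denote the largest integer $j$ with $2^j \leq A$, so that $2^{j_0} \leq A$ and the summation condition $2^j \leq A$ is equivalent to $j \leq j_0$. Reindexing by $k = j_0 - j \geq 0$ gives
\begin{equation*}
\sum_{j \in \mathbb{Z},\, 2^j \leq A} 2^{j\beta} = \sum_{k=0}^{\infty} 2^{(j_0-k)\beta} = 2^{j_0\beta}\sum_{k=0}^{\infty} 2^{-k\beta} = \frac{2^{j_0\beta}}{1-2^{-\beta}}.
\end{equation*}
Since $\beta > 0$ the geometric series converges, and since $2^{j_0} \leq A$ we have $2^{j_0\beta} \leq A^\beta$; this yields the claim with $C_\beta = (1-2^{-\beta})^{-1}$. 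For the second inequality I would proceed symmetrically, letting $j_1$ be the smallest integer with $2^{j_1} > A$, so that the condition $2^j > A$ becomes $j \geq j_1$. Reindexing by $k = j - j_1 \geq 0$ turns the sum into $2^{-j_1\beta}(1-2^{-\beta})^{-1}$, and the bound $2^{-j_1\beta} < A^{-\beta}$ (which follows from $2^{j_1} > A$ and $\beta>0$) finishes the estimate with the same constant $C_\beta$.

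There is no genuine obstacle here, so the lemma is deliberately elementary; the only points deserving a word of care are the existence of the extremal indices $j_0, j_1$ noted above, and the fact that the resulting constant $C_\beta = (1-2^{-\beta})^{-1}$ is manifestly independent of $A$, which is exactly the uniformity the later applications require. The alternative route indicated in the statement, namely dominating each sum by the integral of $2^{x\beta}$ over the corresponding half-line, produces the same type of bound up to a different explicit constant, but the geometric-series computation is both exact and marginally shorter.
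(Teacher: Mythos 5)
Your proof is correct. The paper itself offers no written proof of this lemma; it only remarks that the estimates "can be easily proved by comparing the sums with the corresponding integrals," i.e.\ by dominating $\sum 2^{j\beta}$ by $\int 2^{x\beta}\,dx$ over the appropriate half-line. You instead sum the geometric series exactly after locating the extremal index ($j_0$ the largest integer with $2^{j_0}\leq A$, respectively $j_1$ the smallest with $2^{j_1}>A$), obtaining the explicit constant $C_\beta=(1-2^{-\beta})^{-1}$ in both cases. The two routes are equally elementary and yield the same type of bound; the integral comparison is the more robust template (it applies verbatim to monotone summands that are not exactly geometric), while your computation is exact and makes the uniformity of $C_\beta$ in $A$ completely transparent, which is the only feature the later applications (the summation over dyadic scales in the proof of Theorem 1.3(c)) actually use. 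Your attention to the existence of $j_0$ and $j_1$ (nonemptiness and one-sided boundedness of the index sets, from $2^j\to 0$ as $j\to-\infty$ and $2^j\to\infty$ as $j\to+\infty$) is the only point where care is needed, and you handle it correctly.
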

Finally, we apply the following duality arguments to prove the Strichartz estimates for some concrete equations on H-type groups.

\begin{lemma} [see \cite{GV}]\label{equ} Let $H$ be a Hilbert space, $X$ a Banach space,$X^*$ the dual of $X$, and $D$ a vector space densely contained in $X$. Let $A\in \mathcal{L}_a(D,H)$ and let $A^*\in \mathcal{L}_a(H,D_a^*)$ be its adjoint operator, defined by
	\begin{equation*}
	\langle A^*v,f \rangle_D=\langle v,Af \rangle_H, \quad \forall f\in D, \quad \forall v \in H,
	\end{equation*}
where $\mathcal{L}_a(Y,Z)$ is the space of linear maps from a vector space $Y$ to a vector space $Z$, $D^*_a$ is the algebraic dual of $D$, $\langle \varphi,f\rangle_D$ is the pairing between $D^*_a$ and $D$, and $\langle\cdot,\cdot\rangle_H$ is the scalar product in $H$. Then the following three condition are equivalent:

(1) There exists $a$, $0\leqslant a \leqslant \infty$ such that for all $f \in D$,
	\begin{equation*}
	\|Af\| \leqslant a\|f\|_X.
	\end{equation*}
	
(2) $\mathfrak{R}(A^*)\subset X^*$, and there exists $a$, $0\leqslant a \leqslant \infty$, such that for all $v \in H$,
	\begin{equation*}
	\|A^*v\|_{X^*} \leqslant a\|v\|.
	\end{equation*}
	
(3) $\mathfrak{R}(A^*A)\subset X^*$,and there exists $a$, $0\leqslant a \leqslant \infty$, such that for all $f \in D$,
	\begin{equation*}
	\|A^*Af\|_{X^*} \leqslant a^2\|f\|_X,
	\end{equation*}
where $||\cdot||$ denote the norm in $H$. The constant $a$ is the same in all three parts. If one of those conditions is satisfied, the operators $A$ and $A^*A$ extend by continuity to bounded operators from $X$ to $H$ and from $X$ to $X^*$,respectively.
\end{lemma}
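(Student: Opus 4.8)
The plan is to establish the three equivalences through the cycle $(1)\Leftrightarrow(2)$, $(1)\Rightarrow(3)$, $(3)\Rightarrow(1)$, keeping explicit track of the constant $a$ at each step so that it is manifestly common to all three conditions. Two structural facts do all the work: the density of $D$ in $X$, which upgrades a functional bounded on $D$ in the $X$-norm to a genuine element of $X^*$, and the self-duality of the Hilbert space $H$, which both expresses $\|Af\|_H$ as a supremum of inner products and, more decisively, rewrites $\|Af\|_H^2$ through the adjoint pairing.

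For $(1)\Rightarrow(2)$ I would fix $v\in H$ and note that for every $f\in D$ the defining relation, together with the Cauchy--Schwarz inequality in $H$ and hypothesis $(1)$, gives $|\langle A^*v,f\rangle_D|=|\langle v,Af\rangle_H|\leq\|v\|\,\|Af\|_H\leq a\|v\|\,\|f\|_X$. Thus $f\mapsto\langle A^*v,f\rangle_D$ is a linear functional on $D$ bounded in the $X$-norm by $a\|v\|$; since $D$ is dense in $X$ it extends uniquely to an element of $X^*$, so $\mathfrak{R}(A^*)\subset X^*$ and $\|A^*v\|_{X^*}\leq a\|v\|$, which is $(2)$. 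For the converse $(2)\Rightarrow(1)$ I would fix $f\in D$ and use $\|Af\|_H=\sup_{\|v\|\leq1}|\langle v,Af\rangle_H|$; for each unit vector $v$, the relation $\langle v,Af\rangle_H=\langle A^*v,f\rangle_D$ together with $A^*v\in X^*$ identifies the pairing with the $X^*$--$X$ duality, giving $|\langle v,Af\rangle_H|\leq\|A^*v\|_{X^*}\|f\|_X\leq a\|v\|\,\|f\|_X$, and taking the supremum yields $(1)$.

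For $(1)\Rightarrow(3)$, granting $(1)$ and hence $(2)$, for $f\in D$ one has $Af\in H$, so $A^*(Af)\in X^*$, whence $\mathfrak{R}(A^*A)\subset X^*$, and $\|A^*Af\|_{X^*}\leq a\|Af\|_H\leq a^2\|f\|_X$, which is $(3)$. The decisive step is $(3)\Rightarrow(1)$, where the Hilbert structure is essential: for $f\in D$ I would write
\begin{equation*}
\|Af\|_H^2=\langle Af,Af\rangle_H=\langle A^*Af,f\rangle_D,
\end{equation*}
and, since $(3)$ places $A^*Af$ in $X^*$, this equals the $X^*$--$X$ pairing, so $\|Af\|_H^2\leq\|A^*Af\|_{X^*}\|f\|_X\leq a^2\|f\|_X^2$, giving $\|Af\|_H\leq a\|f\|_X$.

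I expect the main obstacle to be conceptual rather than computational: the adjoint $A^*$ a priori takes values only in the algebraic dual $D_a^*$, whereas conditions $(2)$ and $(3)$ assert the qualitative improvement $\mathfrak{R}(A^*)\subset X^*$ (respectively $\mathfrak{R}(A^*A)\subset X^*$). The point that must be handled with care is the identification of the formal pairing $\langle\cdot,\cdot\rangle_D$ with the true $X^*$--$X$ duality, which is precisely what density of $D$ in $X$ licenses once the relevant functional is shown to be $X$-bounded. The closing extension-by-continuity assertion is then routine: $A$ is bounded $X\to H$ on the dense subspace $D$ and $A^*A$ is bounded $X\to X^*$ on $D$, so both extend uniquely to the whole of $X$ with the same norms.
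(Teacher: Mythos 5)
Your proof is correct: the cycle $(1)\Leftrightarrow(2)$, $(1)\Rightarrow(3)$, $(3)\Rightarrow(1)$ with the density of $D$ in $X$ upgrading $A^*v$ to an element of $X^*$ and the identity $\|Af\|_H^2=\langle A^*Af,f\rangle_D$ driving the $TT^*$ step is exactly the standard argument of Ginibre--Velo, which the paper invokes by citation (see \cite{GV}) rather than reproving. Your explicit tracking of the constant $a$ and your attention to identifying the formal pairing with the $X^*$--$X$ duality match the cited source's treatment, so there is nothing to correct.
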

\begin{lemma}[see \cite{GV}]\label{DuilatyXX} Let $H$,$D$ and two triplets$(X_i,A_i,a_i)$,$i=1,2$, satisfy any of the conditions of Lemma \ref{equ}. Then for all choices of $i,j=1,2$, $\mathfrak{R}(A_i^*A_j) \subset X_i^*$,and for all $f \in D$,
	\begin{equation*}
	\|A_i^* A_jf\|_{X_i^*} \leqslant a_i a_j\|f\|_{X_j}.
	\end{equation*}
\end{lemma}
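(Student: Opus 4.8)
The plan is to reduce the bilinear estimate to a composition of the two linear bounds furnished by Lemma \ref{equ}, using the equivalence of its three conditions for each triplet separately. First I would fix $i,j \in \{1,2\}$ and extract the two ingredients I need. Since the triplet $(X_j, A_j, a_j)$ satisfies the conditions of Lemma \ref{equ}, condition (1) applied to this triplet gives
\begin{equation*}
\|A_j f\|_H \leqslant a_j \|f\|_{X_j}, \quad \forall f \in D,
\end{equation*}
so in particular $A_j f \in H$ for every $f \in D$. Thus $A_j$ sends the Banach-space data into the Hilbert space $H$ with the stated operator norm.

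Next I would invoke condition (2) of Lemma \ref{equ} for the triplet $(X_i, A_i, a_i)$. Because the three conditions are equivalent \emph{with the same constant}, the validity of (1) for this triplet forces $\mathfrak{R}(A_i^*) \subset X_i^*$ together with
\begin{equation*}
\|A_i^* v\|_{X_i^*} \leqslant a_i \|v\|_H, \quad \forall v \in H.
\end{equation*}
Now I would simply compose the two bounds: taking $v = A_j f \in H$ in the estimate for $A_i^*$ shows at once that $A_i^* A_j f \in \mathfrak{R}(A_i^*) \subset X_i^*$, which yields the claimed range inclusion $\mathfrak{R}(A_i^* A_j) \subset X_i^*$. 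Chaining the two inequalities then gives
\begin{equation*}
\|A_i^* A_j f\|_{X_i^*} \leqslant a_i \|A_j f\|_H \leqslant a_i a_j \|f\|_{X_j}, \quad \forall f \in D,
\end{equation*}
which is exactly the desired bound. Since $i$ and $j$ were arbitrary, this covers all four choices.

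There is no genuine obstacle here: the only point requiring care is the bookkeeping of which condition is applied to which index. Condition (1) is used for the index $j$, to push $f$ forward into $H$, while the dual condition (2) is used for the index $i$, to pull back into $X_i^*$; both steps are legitimate precisely because Lemma \ref{equ} asserts these conditions are equivalent with a common constant, so no loss in the constants $a_i, a_j$ is incurred. Specializing to $i = j$ recovers part (3) of Lemma \ref{equ}, so the present statement is just the natural bilinear generalization obtained by allowing the two operators to differ.
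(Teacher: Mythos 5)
Your argument is correct and coincides with the standard proof: the paper itself gives no proof of this lemma but merely cites \cite{GV}, and the proof there is precisely your composition --- condition (1) of Lemma \ref{equ} applied to the triplet $(X_j,A_j,a_j)$ to land in $H$, followed by condition (2) applied to $(X_i,A_i,a_i)$ to pull back into $X_i^*$, with no loss of constants since Lemma \ref{equ} asserts the equivalence of the conditions with a common constant. Nothing further is needed.
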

\begin{lemma}\label{L1LW} (see \cite{GV}) Let $H$ be a Hilbert space, let $I$ be an interval of $\mathbb{R}$, let $X \subset S'(I \times \mathbb{R}^n)$ be a Banach space, let $X$ be stable under time restriction, let $X$ and $A$ satisfy(any of) the conditions of Lemma \ref{equ}. Then the operator $A^*A$ is a bounded operator from $L_t^1(I,H)$ to $X^*$ and from $X$ to $L_t^\infty(I,H)$.
\end{lemma}

\section{Decay Estimates}
In this section, we will prove Theorem\ref{ResultTime}.

\noindent{\bf Proof of Theorem \ref{ResultTime}: }~ First, we prove (a). Since the case $p=1$ has been obtained in \cite{SY2023}, we assume $p\geq 2$. For any $j\geq0$, according to $\varphi_j$ introduced in section 2, we have
	\begin{equation*}
	\begin{aligned}
	U_t\varphi_j(z,s)&=\frac{1}{(2\pi)^{d+p}}\sum_{m=0}^{\infty} \int_{\mathbb{R}^p} e^{-i\lambda\cdot s} e^{it\phi((2m+d)|\lambda|)} \\ & \qquad \qquad \qquad \times R^j_m(|\lambda|) \mathfrak{L}_m^{(d-1)}\left(\frac{|\lambda|}{2}|z|^2\right)|\lambda|^d \,d\lambda.
	\end{aligned}
\end{equation*}
By the change of variable $\lambda'=2^{-2j}(2m+d)\lambda$, we get
	\begin{equation}\label{Imj}
	\begin{aligned}
	U_t\varphi_j(z,s)&=\frac{1}{(2\pi)^{d+p}}\sum_{m=0} ^\infty\frac{2^{Nj}}{(2m+d)^{d+p}}\int_{\mathbb{R}^p} e^{-i\frac{2^{2j}}{2m+d}\lambda' \cdot s} e^{it\phi(2^{2j}|\lambda'|)} \\
	&\qquad \qquad \times R(|\lambda'|) \mathfrak{L}_m^{(d-1)}\left(\frac{2^{2j-1}}{2m+d}|\lambda'||z|^2\right)|\lambda'|^d \,d\lambda' \\
	&=\frac{1}{(2\pi)^{d+p}}\sum_{m=0} ^\infty I_{m,j}(t,z,s),
	\end{aligned}
	\end{equation}
where
	\begin{equation*}
	\begin{aligned}
	I_{m,j}(t,z,s)&=\frac{2^{Nj}}{(2m+d)^{d+p}}\int_{\mathbb{R}^p} e^{-i\frac{2^{2j}}{2m+d}\lambda' \cdot s} e^{it\phi(2^{2j}|\lambda'|)} \\
	          &\qquad \qquad \times R(|\lambda'|) \mathfrak{L}_m^{(d-
    1)}\left(\frac{2^{2j-1}}{2m+d}|\lambda'||z|^2\right)|\lambda'|^d
    \,d\lambda'\\
              &=\frac{2^{Nj}}{(2m+d)^{d+p}}\int_{\mathbb{S}^{p-1}}\int_0^\infty e^{-i\frac{2^{2j}}{2m+d} r \varepsilon\cdot s}  e^{it\phi(2^{2j}r)}\\
              &\qquad \qquad \times R(r) \mathfrak{L}_m^{(d-
    1)}\left(\frac{2^{2j-1}}{2m+d}r|z|^2\right)r^{d+p-1}
    \,dr d\sigma(\varepsilon).
	\end{aligned}
	\end{equation*}
 We integrate first over $\mathbb{S}^{p-1}$:
 \begin{equation}\label{IImj}
	\begin{aligned}
	I_{m,j}(t,z,s)&=\frac{2^{Nj}}{(2m+d)^{d+p}}\int_0^\infty \widehat{d\sigma}\left(\frac{2^{2j}}{2m+d}r s\right)  e^{it\phi(2^{2j}r)}\\
              &\qquad \qquad \times R(r) \mathfrak{L}_m^{(d-
    1)}\left(\frac{2^{2j-1}}{2m+d}r|z|^2\right)r^{d+p-1}
    \,dr\\
    &=\uppercase\expandafter{\romannumeral2}_{m,j}\left(t,\sqrt{\frac{2^{2j}}{2m+d}}z,\frac{2^{2j}}{2m+d}s\right),
	\end{aligned}
	\end{equation}
 where
 \begin{equation*}
	\begin{aligned}
	\uppercase\expandafter{\romannumeral2}_{m,j}(t,z,s)&=\frac{2^{Nj}}{(2m+d)^{d+p}}\int_0^\infty \widehat{d\sigma}\left(rs\right)  e^{it\phi(2^{2j}r)}\\
              &\qquad \qquad \times R(r) \mathfrak{L}_m^{(d-
    1)}\left(\frac{r}{2}|z|^2\right)r^{d+p-1}
    \,dr\\
    &=\frac{(2\pi)^\frac{p}{2}2^{Nj}}{(2m+d)^{d+p}}\int_0^\infty  e^{it\phi(2^{2j}r)}(r |s|)^{-\frac{p-2}{2}}J_\frac{p-2}{2}(r |s|)\\
              &\qquad \qquad \times R(r) \mathfrak{L}_m^{(d-
    1)}\left(\frac{r}{2}|z|^2\right)r^{d+p-1}
    \,dr.
	\end{aligned}
	\end{equation*}
 It suffices to obtain the $L_{z,s}^\infty$-estimate for $\uppercase\expandafter{\romannumeral2}_{m,j}(t,z,s)$.\\
 From Lemma \ref{Laguerre} and \eqref{bessel1} in Lemma \ref{Bessel}, we obtain the trivial estimate
\begin{equation}\label{trivial}
\left|\uppercase\expandafter{\romannumeral2}_{m,j}(t,z,s)\right|\leq C_{d,p} 2^{jN}(2m+d)^{-p-1/4}.
\end{equation}
We will continue the discussion in two cases. On the one hand, we use the vanishing property at the origin \eqref{bessel1} and the recurring property for the Bessel functions \eqref{bessel2}. On the other hand, we apply the decay property of the Bessel function \eqref{Bessel-Fourier}.\\
\textbf{Case 1.} $|s|\leq 2$. In this case, we shall use the vanishing property at the origin. Denote $D_r=\frac{1}{i2^{2j}t\phi'(2^{2j}r)}\frac{d}{dr}$. We see that
\begin{equation*}
    D_r\left(e^{it\phi(2^{2j}r)}\right)=e^{it\phi(2^{2j}r)}, \quad D_r^*f=\frac{i}{2^{2j}t}\frac{d}{dr}\left(\frac{1}{\phi'(2^{2j}r)}f\right)
\end{equation*}
For $\beta\in\mathbb{N}$ and $r\in [1/2,2]$, it follows from (H1) that
\begin{equation}\label{phase-prime}
 \frac{d^\beta}{dr^\beta}\left(\frac{1}{\phi'(2^{2j}r)}\right)\leq C_\beta 2^{-2j(m_1-1)}.
\end{equation}
Using integration by part, for any $0\leq q\leq d$, we have
 \begin{equation*}
	\begin{aligned}
	\uppercase\expandafter{\romannumeral2}_{m,j}(t,z,s)&=\frac{(2\pi)^\frac{p}{2}2^{Nj}}{(2m+d)^{d+p}}\int_0^\infty  e^{it\phi(2^{2j}r)}(r |s|)^{-\frac{p-2}{2}}J_\frac{p-2}{2}(r |s|)\\
              &\qquad \qquad \times R(r) \mathfrak{L}_m^{(d-
    1)}\left(\frac{r}{2}|z|^2\right)r^{d+p-1}
    \,dr\\
    &=\frac{(2\pi)^\frac{p}{2}2^{Nj}}{(2m+d)^{d+p}}\int_0^\infty  D_r\left(e^{it\phi(2^{2j}r)}\right)(r |s|)^{-\frac{p-2}{2}}J_\frac{p-2}{2}(r |s|)\\
              &\qquad \qquad \times R^*(r) \mathfrak{L}_m^{(d-
    1)}\left(\frac{r}{2}|z|^2\right)r^{d+p-1}
    \,dr\\
     &=\frac{(2\pi)^\frac{p}{2}2^{Nj}}{(2m+d)^{d+p}}\cdot \frac{i}{2^{2j}t}\int_0^\infty  e^{it\phi(2^{2j}r)}\frac{d}{dr}\left(\frac{1}{\phi'(2^{2j}r)}(r |s|)^{-\frac{p-2}{2}}J_\frac{p-2}{2}(r |s|)\right.\\
              &\qquad \qquad \left.\times R(r) \mathfrak{L}_m^{(d-
    1)}\left(\frac{r}{2}|z|^2\right)r^{d+p-1}\right)
    \,dr\\
    &=\frac{(2\pi)^\frac{p}{2}2^{Nj}}{(2m+d)^{d+p}}\cdot \left(\frac{i}{2^{2j}t}\right)^q\sum_{k=0}^q\sum_{(\beta_1,\beta_2,\cdots,\beta_q)\in \Lambda_q^k} C_{q,k}\int_0^\infty  e^{it\phi(2^{2j}r)}\\
     &\qquad \qquad\times \prod_{j=1}^q\frac{d^{\beta_j}}{dr^{\beta_j}}\left(\frac{1}{\phi'(2^{2j}r)}\right)\frac{d^{q-k}}{dr^{q-k}}\left((r|s|)^{-\frac{p-2}{2}}J_\frac{p-2}{2}(r|s|)R(r)\right.\\
    &\qquad \qquad\times \left.\mathfrak{L}_m^{(d-
    1)}\left(\frac{r}{2}|z|^2\right)r^{d+p-1}\right)
    \,dr,
    \end{aligned}
	\end{equation*}
 where $\Lambda_q^k=\{(\beta_1,\beta_2,\cdots,\beta_q)\in\{0,1,2,\cdots,k\}^q: \sum\limits_{j=1}^q\beta_j=k\}$. Combined it with \eqref{small-s} and \eqref{phase-prime}, it yields that
 \begin{equation}\label{sharp-s-small}
|\uppercase\expandafter{\romannumeral2}_{m,j}(t,z,s)|\leq C_{q,d,p}|t|^{-q}2^{j(N-2m_1q)}(2m+d)^{-p-1/4}, \,\forall 0\leq q\leq d.
 \end{equation}
\textbf{Case 2.} $|s|>2$. In this case, we shall apply the decay property of Bessel functions.
From \eqref{Bessel-Fourier}, we have
\begin{align*}
&\quad \uppercase\expandafter{\romannumeral2}_{m,j}(t,z,s)\\
&=\frac{C_p2^{Nj}}{(2m+d)^{d+p}}\int_0^\infty  e^{it\phi(2^{2j}r)}\left(e^{ir|s|}h(r|s|)+e^{-ir|s|}\overline{h(r|s|)}\right)\\
              &\qquad \qquad \times R(r) \mathfrak{L}_m^{(d-
    1)}\left(\frac{r}{2}|z|^2\right)r^{d+p-1}
    \,dr\\
    &=\frac{C_p2^{Nj}}{(2m+d)^{d+p}}\int_0^\infty  e^{i\left(t\phi(2^{2j}r)+r|s|\right)}h_+(r|s|) R(r) \mathfrak{L}_m^{(d-
    1)}\left(\frac{r}{2}|z|^2\right)r^{d+p-1}
    \,dr\\
    &\qquad +\frac{C_p2^{Nj}}{(2m+d)^{d+p}}\int_0^\infty  e^{i\left(t\phi(2^{2j}r)-r|s|\right)}h_-(r|s|) R(r) \mathfrak{L}_m^{(d-
    1)}\left(\frac{r}{2}|z|^2\right)r^{d+p-1}
    \,dr\\
    &=\sum_{\pm}\uppercase\expandafter{\romannumeral2}^\pm_{m,j}(t,z,s),
\end{align*}
where
\begin{align*}
&\quad \uppercase\expandafter{\romannumeral2}^\pm_{m,j}(t,z,s)\\
&=\frac{C_p2^{Nj}}{(2m+d)^{d+p}}\int_0^\infty  e^{i\left(t\phi(2^{2j}r)\pm r|s|\right)}h_\pm(r|s|) R(r) \mathfrak{L}_m^{(d-
    1)}\left(\frac{r}{2}|z|^2\right)r^{d+p-1}
    \,dr\\
&=\frac{C_p2^{Nj}}{(2m+d)^{d+p}}\int_0^\infty  e^{it\phi_\pm(2^{2j}r)}h_\pm(r|s|) R(r) \mathfrak{L}_m^{(d-
    1)}\left(\frac{r}{2}|z|^2\right)r^{d+p-1}
    \,dr,
\end{align*}
and $\phi_\pm(r)=\phi(r)\pm \frac{r|s|}{2^{2j}t}$.
Without loss of generality, we can assume that $t>0$ and $\phi'(r)>0$. For $\uppercase\expandafter{\romannumeral2}^+_{m,j}(t,z,s)$, note that $\phi'_+(r)\geq \phi'(r)$ and \eqref{phase-prime} also holds true if we replace $\phi$ by $\phi_+$. Combined with \eqref{big-s}, analogous to Case 1, we obtain
\begin{equation}\label{sharp-s-big-plus}
|\uppercase\expandafter{\romannumeral2}^+_{m,j}(t,z,s)|\leq C_{q,d,p}|t|^{-q}2^{j(N-2m_1q)}(2m+d)^{-p-1/4}, \,\forall 0\leq q\leq d.
 \end{equation}
For $\uppercase\expandafter{\romannumeral2}^-_{m,j}(t,z,s)$, $\phi'_-(2^{2j}r)=\phi'(2^{2j}r)-\frac{|s|}{2^{2j}t}$. Noticing that if $|s|=2^{2j}t\phi'(2^{2j}r)$, then $\phi'_-(2^{2j}r)=0$. We divide the discussion into the following two cases.\\\\
\textbf{Case 2.1.} $|s|\geq 2\sup\limits_{r\in[1/2,2]}2^{2j}t\phi'(2^{2j}r)$. \\
In this case, we have $|\phi'_-(2^{2j}r)|=\frac{1}{2^{2j}t}\left(|s|-2^{2j}t\phi'(2^{2j}r)\right)\geq \phi'(2^{2j}r)$ and \eqref{phase-prime} also holds true if we replace $\phi$ by $\phi_-$. Analogous to $\uppercase\expandafter{\romannumeral2}^+_{m,j}(t,z,s)$, we obtain
\begin{equation}\label{sharp-s-big-minus1}
|\uppercase\expandafter{\romannumeral2}^-_{m,j}(t,z,s)|\leq C_{q,d,p}|t|^{-q}2^{j(N-2m_1q)}(2m+d)^{-p-1/4}, \,\forall 0\leq q\leq d.
 \end{equation}
\\
\textbf{Case 2.2.} $|s|\leq \frac{1}{2}\inf\limits_{r\in[1/2,2]}2^{2j}t\phi'(2^{2j}r)$. \\
 In this case, we have $\phi'_-(2^{2j}r)=\frac{1}{2^{2j}t}\left(2^{2j}t\phi'(2^{2j}r)-|s|\right)\geq \frac{1}{2}\phi'(2^{2j}r)$ and \eqref{phase-prime} also holds true if we replace $\phi$ by $\phi_-$. Analogous to $\uppercase\expandafter{\romannumeral2}^+_{m,j}(t,z,s)$, we obtain
\begin{equation}\label{sharp-s-big-minus2}
|\uppercase\expandafter{\romannumeral2}^-_{m,j}(t,z,s)|\leq C_{q,d,p}|t|^{-q}2^{j(N-2m_1q)}(2m+d)^{-p-1/4}, \,\forall 0\leq q\leq d.
 \end{equation}
 \\
 \textbf{Case 2.3.} $\frac{1}{2}\inf\limits_{r\in[1/2,2]}2^{2j}t\phi'(2^{2j}r)\leq |s|\leq 2\sup\limits_{r\in[1/2,2]}2^{2j}t\phi'(2^{2j}r)$. \\
 In this case, we see that $|s|\sim |t|2^{2jm_1}$. It follows from \eqref{big-s} that
 \begin{equation}\label{sharp-s-big-minus3}
 \begin{aligned}
  |\uppercase\expandafter{\romannumeral2}^-_{m,j}(t,z,s)|&\leq C_{d,p}2^{jN}|s|^{-\frac{p-1}{2}}(2m+d)^{-p-1/4}\\
  &\leq C_{d,p}|t|^{-\frac{p-1}{2}}2^{j\left(N-m_1(p-1)\right)}(2m+d)^{-p-1/4}.
  \end{aligned}
 \end{equation}
 \\
Noting that $p\leq 2d-1$ in Remark \ref{p-d}, from \eqref{sharp-s-small}, \eqref{sharp-s-big-plus}, \eqref{sharp-s-big-minus1}, \eqref{sharp-s-big-minus2} and \eqref{sharp-s-big-minus3}, we have
\begin{equation}\label{sharp}
|\uppercase\expandafter{\romannumeral2}_{m,j}(t,z,s)|\leq C_{d,p}|t|^{-\frac{p-1}{2}}2^{j\left(N-m_1(p-1)\right)}(2m+d)^{-p-1/4}.
 \end{equation}
Interpolating \eqref{trivial} and \eqref{sharp}, we get
\begin{equation*}
|\uppercase\expandafter{\romannumeral2}_{m,j}(t,z,s)|\leq C_{d,p}|t|^{-\theta}2^{j\left(N-2m_1\theta\right)}(2m+d)^{-p-1/4},\,\forall 0\leq\theta\leq\frac{p-1}{2},
 \end{equation*}
which follows immediately that for any $0\leq\theta\leq\frac{p-1}{2}$
\begin{equation*}
|U_t\varphi_j(z,s)|\leq C_{d,p}|t|^{-\theta}2^{j\left(N-2m_1\theta\right)}\sum_{m=0}^\infty(2m+d)^{-p-1/4}\leq C_{d,p}|t|^{-\theta}2^{j\left(N-2m_1\theta\right)}.
\end{equation*}
	
If (H3) holds in addition, we have $\left|\frac{d^2}{dr^2}\left(\phi_-(2^{2j}r)\right)\right|=2^{4j}|\phi''(2^{2j}r)|\sim 2^{2j\alpha_1}$ in Case 2.3. It follows from Lemma \ref{phase} and \eqref{big-s} that
\begin{equation}\label{better-sharp-s-big-minus3}
\begin{aligned}
|\uppercase\expandafter{\romannumeral2}^-_{m,j}(t,z,s)|&\leq C_{d,p}2^{jN}|t2^{2j\alpha_1}|^{-\frac{1}{2}}|s|^{-\frac{p-1}{2}}(2m+d)^{-p-1/4}\\
&\leq C_{d,p} |t|^{-\frac{p}{2}}2^{j\left(N-m_1p-(\alpha_1-m_1)\right)}(2m+d)^{-p-1/4}.
\end{aligned}
 \end{equation}
Using the fact that for any $0\leq \theta\leq 1$, $\frac{p-1+\theta}{2}=(1-\theta)\frac{p-1}{2}+\theta\frac{p}{2}$, by an interpolation between \eqref{sharp-s-big-minus3} and \eqref{better-sharp-s-big-minus3}, we obtain
\begin{equation}\label{Better-sharp-s-big-minus3}
|\uppercase\expandafter{\romannumeral2}^-_{m,j}(t,z,s)|\leq C_{d,p} |t|^{-\frac{p-1+\theta}{2}}2^{j\left(N-m_1(p-1+\theta)-\theta(\alpha_1-m_1)\right)}(2m+d)^{-p-1/4}.
 \end{equation}
Noting that $\alpha_1\leq m_1$, from \eqref{sharp-s-small}, \eqref{sharp-s-big-plus}, \eqref{sharp-s-big-minus1}, \eqref{sharp-s-big-minus2} and \eqref{Better-sharp-s-big-minus3}, we have
\begin{equation}\label{better-sharp}
|\uppercase\expandafter{\romannumeral2}_{m,j}(t,z,s)|\leq C_{d,p} |t|^{-\frac{p-1+\theta}{2}}2^{j\left(N-m_1(p-1+\theta)-\theta(\alpha_1-m_1)\right)}(2m+d)^{-p-1/4},\forall 0\leq\theta\leq 1.
 \end{equation}
It follows immediately that for any $0\leq\theta\leq1$
\begin{equation*}
\begin{aligned}
|U_t\varphi_j(z,s)|&\leq C_{d,p} |t|^{-\frac{p-1+\theta}{2}}2^{j\left(N-m_1(p-1+\theta)-\theta(\alpha_1-m_1)\right)}\sum_{m=0}^\infty(2m+d)^{-p-1/4}\\
&\leq C_{d,p}|t|^{-\frac{p-1+\theta}{2}}2^{j\left(N-m_1(p-1+\theta)-\theta(\alpha_1-m_1)\right)},
\end{aligned}
\end{equation*}
which completes the proof of (a).\\

 The proof of (b) is similar to (a) and we omit the details.\\

 Now we turn to the proof of (c). Take any fixed $\theta$ satisfying $0\leq\theta\leq\min\left(\frac{N}{2m_2},\frac{p-1}{2}\right)$. We write
 \begin{equation}\label{U1}
\begin{aligned}
\|U_t \psi\|_{L^\infty(G)}&=\left\|\sum_{j\leq 2}U_t\Delta_j\psi\right\|_{L^\infty(G)}\\
&=\left\|\sum_{j\leq 2}(U_t\varphi_j)*\psi\right\|_{L^\infty(G)}\\
&\leq\|\psi\|_{L^1(G)}\left\|\sum_{j\leq 2}U_t\varphi_j\right\|_{L^\infty(G)}.
\end{aligned}
\end{equation}
By (b), it gives that
\begin{equation*}
    \left\|\sum_{j\leq 2}U_t\varphi_j\right\|_{L^\infty(G)}\leq \sum_{j\leq 2}\left\|U_t\varphi_j\right\|_{L^\infty(G)}\lesssim |t|^{-\theta}\sum_{j\leq 2}2^{j\left(N-2m_2\theta\right)},
\end{equation*}
which indicates that \eqref{res-sum1} holds when $\theta<\frac{N}{2m_2}$.

Therefore, it suffices to consider the case $\theta=\frac{N}{2m_2}\leq\frac{p-1}{2}$. By \eqref{Imj}, \eqref{IImj} and \eqref{U1}, we have
\begin{equation}\label{U2}
\sum_{j\leq 2}U_t\varphi_j(z,s)=C_{d,p}\sum_{j\leq 2}\sum_{m=0}^\infty \uppercase\expandafter{\romannumeral2}_{m,j}\left(t,\sqrt{\frac{2^{2j}}{2m+d}}z,\frac{2^{2j}}{2m+d}s\right).
\end{equation}
Argued similarly as (a), from the proof of (b), we know that if $j_0\leq 2$ and $2\leq \frac{2^{2j_0}}{2m+d}|s|\sim |t|2^{2j_0m_2}$, then
\begin{equation*}
 \begin{aligned}
  \left|\uppercase\expandafter{\romannumeral2}^-_{m,j_0}\left(t,\sqrt{\frac{2^{2j_0}}{2m+d}}z,\frac{2^{2j_0}}{2m+d}s\right)\right|&\leq C_{d,p}|t|^{-\frac{p-1}{2}}2^{j_0\left(N-m_2(p-1)\right)}(2m+d)^{-p-1/4}\\
  &\leq C_{d,p}|t|^{-\frac{p-1}{2}}\left|\frac{1}{t}\right|^\frac{N-m_2(p-1)}{2m_2}(2m+d)^{-p-1/4}\\
  &\leq C_{d,p}|t|^{-\frac{N}{2m_2}}(2m+d)^{-p-1/4},
  \end{aligned}
 \end{equation*}
 and we deduce
 \begin{equation}\label{sim}
\left|\uppercase\expandafter{\romannumeral2}_{m,j_0}\left(t,\sqrt{\frac{2^{2j_0}}{2m+d}}z,\frac{2^{2j_0}}{2m+d}s\right)\right|\leq C_{d,p}|t|^{-\frac{N}{2m_2}}(2m+d)^{-p-1/4}.
 \end{equation}
When $|j-j_0|>C\gg1$, it also holds true that
 \begin{equation}\label{gg}
\left|\uppercase\expandafter{\romannumeral2}_{m,j}\left(t,\sqrt{\frac{2^{2j}}{2m+d}}z,\frac{2^{2j}}{2m+d}s\right)\right|\leq C_{d,p}|t|^{-q}2^{j(N-2m_2q)}(2m+d)^{-p-1/4}
 \end{equation}
for any $0\leq q\leq d$.\\
Hence, by \eqref{sim} and \eqref{gg} we have
 \begin{align}
  &\quad \sum_{j\leq2}\left|\uppercase\expandafter{\romannumeral2}_{m,j}\left(t,\sqrt{\frac{2^{2j}}{2m+d}}z,\frac{2^{2j}}{2m+d}s\right)\right|\nonumber\\
  &\leq \left(\sum_{|j-j_0|\leq C}+\underset{2^j\leq|t|^{-\frac{1}{2m_2}}}{\sum_{|j-j_0|>C}}+\underset{2^j>|t|^{-\frac{1}{2m_2}}}{\sum_{|j-j_0|>C}}\right)\left|\uppercase\expandafter{\romannumeral2}_{m,j}\left(t,\sqrt{\frac{2^{2j}}{2m+d}}z,\frac{2^{2j}}{2m+d}s\right)\right|\nonumber\\
  &\leq C_{d,p}\left(|t|^{-\frac{N}{2m_2}}+\sum_{2^j\leq|t|^{-\frac{1}{2m_2}}}2^{jN}+\sum_{2^j>|t|^{-\frac{1}{2m_2}}}|t|^{-q}2^{j(N-2m_2q)}\right)(2m+d)^{-p-1/4}\nonumber\\
  &\leq C_{d,p}|t|^{-\frac{N}{2m_2}}(2m+d)^{-p-1/4},\label{last}
  \end{align}
 where the last inequality is obtained by applying Lemma \ref{Sum} and choosing $q=\frac{p+1}{2}\leq d$.  By \eqref{U2} and \eqref{last}, we obtain
 \begin{align*}
\left|\sum_{j\leq 2}U_t\varphi_j(z,s)\right|&\leq C_{d,p}\sum_{m=0}^\infty \sum_{j\leq 2}\left|\uppercase\expandafter{\romannumeral2}_{m,j}\left(t,\sqrt{\frac{2^{2j}}{2m+d}}z,\frac{2^{2j}}{2m+d}s\right)\right|\\
&\leq C_{d,p}|t|^{-\frac{N}{2m_2}}\sum_{m=0}^\infty (2m+d)^{-p-1/4}\\
&\leq C_{d,p}|t|^{-\frac{N}{2m_2}},
\end{align*}
and our desired result immediately comes out from \eqref{U1}.

In addition, if (H4) holds, we can argue similarly and we omit the details.
 	\qed	

\section{Applications}
In this section, we prove Strichartz estimates for some concrete equations on H-type groups by using Theorem \ref{ResultTime}. Above all, we introduce the general Strichartz estimates for $U_t=e^{it\phi(\mathcal{L})}$.

We recall a variant of Hardy-Littlewood-Sobolev inequality.
\begin{lemma}[\cite{GPW2008}]\label{VHLW} Assume $\gamma_1,\gamma_2\in\mathbb{R}$. Let
\begin{equation*}
    k(y)=\begin{cases}
	|y|^{-\theta_1} , \, &|y|\leq1,\\
	|y|^{-\theta_2}, \,&|y|>1.
 \end{cases}
\end{equation*}
Assume that one of the following conditions holds:\\

(1)~$0<\theta_1=\theta_2<d$, $1<r<q<\infty$ and $1-\frac{1}{r}+\frac{1}{q}=\frac{\theta_1}{d}$,\\

(2)~$\theta_1<\theta_2$, $0<\theta_1<d$, $1<r<q<\infty$ and $1-\frac{1}{r}+\frac{1}{q}=\frac{\theta_1}{d}$,\\

(3)~$\theta_1<\theta_2$, $0<\theta_2<d$, $1<r<q<\infty$ and $1-\frac{1}{r}+\frac{1}{q}=\frac{\theta_2}{d}$,\\

(4)~$\theta_1<\theta_2$, $1\leq r\leq q\leq \infty$ and $\frac{\theta_1}{d}<1-\frac{1}{r}+\frac{1}{q}<\frac{\theta_2}{d}$.\\
We have
\begin{equation*}
    \left\|\int_{\mathbb{R}^d}f(\cdot-y)k(y)dy\right\|_{L^q(\mathbb{R}^d)}\lesssim \|f\|_{L^r(\mathbb{R}^d)}.
\end{equation*}
\end{lemma}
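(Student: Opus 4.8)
The plan is to realise the operator as a convolution, $Tf(x)=\int_{\R^d}f(x-y)k(y)\,dy=(f*k)(x)$, and to split the kernel at the unit scale: write $k=k_0+k_\infty$ with $k_0(y)=|y|^{-\theta_1}\mathbf 1_{\{|y|\le 1\}}$ and $k_\infty(y)=|y|^{-\theta_2}\mathbf 1_{\{|y|>1\}}$, so that it suffices to bound $f*k_0$ and $f*k_\infty$ separately in $L^q(\R^d)$. Set $\tfrac1s:=1-\tfrac1r+\tfrac1q$, which is exactly the exponent in Young's inequality $\|f*g\|_{L^q}\lesssim\|f\|_{L^r}\|g\|_{L^s}$. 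Computing in polar coordinates, $k_0\in L^s(\R^d)$ precisely when $\theta_1 s<d$, i.e. $\tfrac1s>\tfrac{\theta_1}{d}$, while $k_\infty\in L^s(\R^d)$ precisely when $\theta_2 s>d$, i.e. $\tfrac1s<\tfrac{\theta_2}{d}$.

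In case (1) the two exponents coincide, $\theta_1=\theta_2=:\theta$, so $k(y)=|y|^{-\theta}$ is the pure Riesz kernel and the hypotheses are exactly the classical Hardy--Littlewood--Sobolev (HLS) conditions; I would quote HLS directly. For the remaining cases I would treat each piece by one of two tools, depending on where $\tfrac1s$ sits relative to $\tfrac{\theta_1}{d}$ and $\tfrac{\theta_2}{d}$. For $f*k_0$: if $\tfrac1s>\tfrac{\theta_1}{d}$ (cases (3), (4)) then $k_0\in L^s$ and Young applies; if $\tfrac1s=\tfrac{\theta_1}{d}$ (case (2)) I would instead use the pointwise bound $|f*k_0|\le|f|*|\cdot|^{-\theta_1}$ and invoke HLS for the full Riesz kernel, legitimate since $0<\theta_1<d$ and $1<r<q<\infty$. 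Symmetrically for $f*k_\infty$: if $\tfrac1s<\tfrac{\theta_2}{d}$ (cases (2), (4)) then $k_\infty\in L^s$ and Young applies, while if $\tfrac1s=\tfrac{\theta_2}{d}$ (case (3)) I would dominate $|f*k_\infty|\le|f|*|\cdot|^{-\theta_2}$ and apply HLS with $0<\theta_2<d$. Summing the two $L^q$ bounds yields $\|Tf\|_{L^q}\lesssim\|f\|_{L^r}$.

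What makes every case close is the ordering $\theta_1<\theta_2$ assumed in (2)--(4): in case (2), $\tfrac1s=\tfrac{\theta_1}{d}<\tfrac{\theta_2}{d}$ automatically puts $k_\infty$ in $L^s$; in case (3), $\tfrac1s=\tfrac{\theta_2}{d}>\tfrac{\theta_1}{d}$ automatically puts $k_0$ in $L^s$; and case (4) is exactly the two-sided strict condition $\tfrac{\theta_1}{d}<\tfrac1s<\tfrac{\theta_2}{d}$ placing both pieces in $L^s$, so it needs only Young and no HLS at all. Since this is a standard kernel-splitting argument, I do not anticipate a genuine obstacle; the only care required is endpoint bookkeeping---verifying $1\le s\le\infty$ so that Young is applicable (which follows from $r\le q$ together with $\tfrac1s>\tfrac{\theta_1}{d}>0$), and confirming that in each critical case the HLS exponent invoked lands in the admissible range $0<\theta_i<d$, $1<r<q<\infty$, which the hypotheses guarantee.
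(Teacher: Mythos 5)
Your argument is correct. The paper itself gives no proof of this lemma --- it is quoted verbatim from \cite{GPW2008} --- and your kernel splitting $k=k_0+k_\infty$ at the unit scale, with Young's inequality at the Young exponent $\tfrac1s=1-\tfrac1r+\tfrac1q$ for whichever piece is subcritical and the pointwise domination $|f*k_i|\le |f|*|\cdot|^{-\theta_i}$ plus classical Hardy--Littlewood--Sobolev for the critical piece in cases (2) and (3), is exactly the standard argument behind the cited result; all the exponent arithmetic you describe checks out. One small remark on the bookkeeping in case (4): the lemma only assumes $\theta_1,\theta_2\in\mathbb{R}$ there, so $\theta_1$ may be nonpositive and your justification ``$\tfrac1s>\tfrac{\theta_1}{d}>0$'' need not apply; but $0\le\tfrac1s\le1$ follows directly from $r\le q$ and $\tfrac1r-\tfrac1q\le1$, and $k_0$ is then bounded on the unit ball, so the conclusion is unaffected.
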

\begin{definition}
Given $\theta_1\leq \theta_2\in\mathbb{R}$, we say $\gamma\in E(\theta_1,\theta_2)$ if one of the following holds:\\

(1)~$0<\theta_1=\theta_2<1$ and $\gamma=\frac{2}{\theta_1}$,\\

(2)~$\theta_1<\theta_2$, $0<\theta_1<1$ and $\gamma=\frac{2}{\theta_1}$,\\

(3)~$\theta_1<\theta_2$, $0<\theta_2<1$ and $\gamma=\frac{2}{\theta_2}$,\\

(4)~$\theta_1<\theta_2$, $2\leq \gamma\leq\infty$ and $\theta_1<\frac{2}{\gamma}<\theta_2$.
\end{definition}
We assume that for $2\leq q\leq \infty$, $\rho=\rho(q)\in\mathbb{R}$ and $\theta_1\leq \theta_2$,
\begin{equation}\label{General-Assumption}
    \|U_tf\|_{B_{q,2}^\rho(G)}\lesssim k(t) \|f\|_{B_{q',2}^{-\rho}(G)}, \quad\text{whare } k(t)=\begin{cases}
	|t|^{-\theta_1} , \, &|t|\leq1,\\
	|t|^{-\theta_2}, \,&|t|>1.
	\end{cases}
\end{equation}
Once we obtain this dispersive estimate with different decay rate between $|t|>1$ and $|t|\leq 1$, we prove the following proposition by using Lemma \ref{VHLW} and standard duality arguments in Lemma \ref{equ}-\ref{L1LW}.
\begin{proposition}[see \cite{GPW2008}]\label{General-Strichartz}
    Assume $U_t$ satisfies \eqref{General-Assumption}. For $\gamma\in E(\theta_1,\theta_2)$ and $T>0$, we have
    \begin{align*}
        \|U_tf\|_{L^\gamma\left(\mathbb{R},B_{q,2}^\rho(G)\right)}&\lesssim \|f\|_{L^2(G)},\\
        \|\int_0^tU_{t-\tau}g(\tau,\cdot)d\tau\|_{L^\gamma\left([-T,T],B_{q,2}^\rho(G)\right)}&\lesssim \|g\|_{L^{\gamma'}\left([-T,T],B_{q',2}^{-\rho}(G)\right)},\\
       \|\int_0^tU_{t-\tau}g(\tau,\cdot)d\tau \|_{L^\infty\left([-T,T],H^\rho(G)\right)}&\lesssim \|g\|_{L^{\gamma'}\left([-T,T],B_{q',2}^{-\rho}(G)\right)},\\
    \|\int_0^tU_{t-\tau}g(\tau,\cdot)d\tau\|_{L^\gamma\left([-T,T],B_{q,2}^\rho(G)\right)}&\lesssim \|g\|_{L^1\left([-T,T],L^2(G)\right)}.
    \end{align*}
\end{proposition}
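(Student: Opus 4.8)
The four inequalities are all consequences of a single $TT^{*}$ estimate, so the plan is to first set up the abstract duality framework of Lemma \ref{equ}, then establish the $TT^{*}$ bound directly from the dispersive estimate \eqref{General-Assumption}, and finally read off the four statements from Lemmas \ref{equ}, \ref{DuilatyXX} and \ref{L1LW}. For the framework I would take $H=L^{2}(G)$ and, for a time interval $I$ (equal to $\mathbb{R}$ for the homogeneous estimate and to $[-T,T]$ for the retarded ones), set $X=L^{\gamma'}(I,B_{q',2}^{-\rho}(G))$, identifying the dual $X^{*}=L^{\gamma}(I,B_{q,2}^{\rho}(G))$ from the Besov duality $(B_{q',2}^{-\rho}(G))^{*}=B_{q,2}^{\rho}(G)$ of Proposition \ref{properties}(iii) together with the duality of the time-Lebesgue spaces. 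The relevant operator is $A^{*}v=U_{(\cdot)}v$, sending $L^{2}(G)$ into space-time functions; since $U_{t}$ is unitary on $L^{2}(G)$ one has $U_{t}^{*}=U_{-t}$, so the composition $A^{*}A$ is precisely the Duhamel convolution $g\mapsto\int U_{t-\tau}g(\tau)\,d\tau$ appearing on the left-hand sides.

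The analytic heart of the argument is the $TT^{*}$ bound. For $g\in X$ I would estimate, by Minkowski's integral inequality followed by the dispersive estimate \eqref{General-Assumption} applied at each time,
\begin{equation*}
\Big\|\int U_{t-\tau}g(\tau)\,d\tau\Big\|_{B_{q,2}^{\rho}(G)}\le\int\|U_{t-\tau}g(\tau)\|_{B_{q,2}^{\rho}(G)}\,d\tau\lesssim\int k(t-\tau)\,\|g(\tau)\|_{B_{q',2}^{-\rho}(G)}\,d\tau.
\end{equation*}
Taking the $L_{t}^{\gamma}$ norm reduces matters to the boundedness of a one-dimensional convolution against the kernel $k$, which is exactly the content of Lemma \ref{VHLW} with $d=1$, $r=\gamma'$ and $q=\gamma$. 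The class $E(\theta_{1},\theta_{2})$ is designed so that, under the identity $1-\tfrac{1}{\gamma'}+\tfrac{1}{\gamma}=\tfrac{2}{\gamma}$, each of its four cases coincides with one of the four hypotheses of Lemma \ref{VHLW}; verifying this correspondence case by case is the one bookkeeping-heavy point, but it is purely mechanical. This yields $\|A^{*}Ag\|_{X^{*}}\lesssim\|g\|_{X}$, i.e. condition (3) of Lemma \ref{equ}.

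Given the $TT^{*}$ bound, the remaining steps are formal. The equivalence of conditions (1)--(3) in Lemma \ref{equ} upgrades it to condition (2), namely the homogeneous estimate $\|U_{t}f\|_{L^{\gamma}(\mathbb{R},B_{q,2}^{\rho}(G))}\lesssim\|f\|_{L^{2}(G)}$, which is the first inequality. The second inequality is the $TT^{*}$ bound itself. The third and fourth inequalities, which put $L^{\infty}_{t}H^{\rho}$ on the left and $L^{1}_{t}L^{2}$ on the right respectively, follow from Lemma \ref{L1LW}, using $H^{\rho}(G)=B_{2,2}^{\rho}(G)$ and the self-duality $B_{2,2}^{0}(G)=L^{2}(G)$ of Proposition \ref{properties}(viii); in these two endpoints the causal truncation $\int_{0}^{t}$ is harmless, since one bounds it by Minkowski's inequality directly against the $L^{1}$- or $L^{\infty}$-in-time norm, while mixed-exponent pairings are supplied by Lemma \ref{DuilatyXX}.

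The main obstacle I anticipate lies not in any single estimate but in the passage from the full-line convolution $\int_{\mathbb{R}}U_{t-\tau}g(\tau)\,d\tau$, which the Hardy--Littlewood--Sobolev argument naturally controls, to the causal integral $\int_{0}^{t}U_{t-\tau}g(\tau)\,d\tau$ in the second inequality, where both time exponents form the dual pair $(\gamma',\gamma)$. The standard remedy, as in \cite{GPW2008}, is the Christ--Kiselev lemma, which transfers boundedness of the untruncated operator to its retarded truncation whenever $\gamma>\gamma'$, that is $\gamma>2$; the endpoint $\gamma=2$, which can occur only in case (4) of $E(\theta_{1},\theta_{2})$, must then be excluded or handled by a separate endpoint argument. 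Apart from this truncation issue and the mechanical matching against Lemma \ref{VHLW}, the proof is a direct transcription of the duality scheme of \cite{GPW2008} into the H-type Besov setting, whose ingredients are all already in place.
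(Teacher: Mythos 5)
Your proposal follows exactly the route the paper indicates for this proposition (which it does not prove in detail, citing \cite{GPW2008}): the $TT^{*}$ scheme of Lemmas \ref{equ}--\ref{L1LW} combined with the variant Hardy--Littlewood--Sobolev inequality of Lemma \ref{VHLW}, applied with $d=1$, $r=\gamma'$, $q=\gamma$ to the scalar convolution in time after the Besov duality of Proposition \ref{properties}(iii). The one caveat is that the Christ--Kiselev step you flag as the main obstacle is not actually needed for the diagonal retarded estimate: since you pass the $B_{q,2}^{\rho}$ norm inside the time integral via Minkowski \emph{before} invoking Lemma \ref{VHLW}, the causal truncation $\chi_{\{0<\tau<t\}}\le 1$ only decreases the scalar majorant, so the estimate for $\int_{0}^{t}$ follows directly from the untruncated convolution bound.
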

\begin{remark}
    It is worth noticing that Strichartz estimates are widely used for the well-posedness of nonlinear dispersive equations. See the book \cite{Tao2006}. We shall address this problem in the frame of H-type groups in a forthcoming paper.
\end{remark}
\subsection{The fractional Schr\"{o}dinger equation}

\indent First, we consider the fractional Schr\"{o}dinger equation ($0<\alpha<1$)
    \begin{equation}\label{FSchrEqu}
	\begin{cases}
	i\partial_tu+\mathcal{L}^\alpha u=g,\\
	u|_{t=0}=u_0.
	\end{cases}
	\end{equation}
By Duhamel's principle, the solution is formally given by
	\begin{equation*}\label{solution1}
	u(t)=S_tu_0-i\int_0^tS_{t-\tau}g(\tau,\cdot)\,d\tau,
	\end{equation*}
where $S_t=e^{it\mathcal{L}^\alpha}$ and it corresponds to the case when $\phi(r)=r^\alpha$. By a simple calculation,
	\begin{equation*}
	\phi'(r)=\alpha r^{\alpha-1}, \quad
	\phi''(r)=\alpha(\alpha-1)r^{\alpha-2}.
	\end{equation*}
We see that $\phi$ satisfies (H1)-(H4) with $m_1=\alpha_1=m_2=\alpha_2=\alpha$.

Now we show that the fractional Schr\"{o}dinger equation is dispersive.
\begin{theorem}
Let $u$ be the solution of the free fractional Schr\"{o}dinger equation \eqref{FSchrEqu} (i.e. with $g=0$). For $0<\alpha<1$, we have the sharp estimate
	\begin{equation*}
	\|u(t)\|_{L^\infty (G)}\lesssim |t|^{-p/2}\|u_0\|_{\dot{B}^{N-p\alpha}_{1,1}(G)}.
	\end{equation*}
\end{theorem}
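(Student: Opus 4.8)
The plan is to pass from Theorem~\ref{ResultTime} to the global $L^\infty$ bound by a frequency-by-frequency argument, reassemble the pieces into the homogeneous Besov norm, and then extract sharpness from the stationary-phase asymptotics of Lemma~\ref{asymptotic}. First I would write the free solution as $u(t)=S_tu_0=e^{it\mathcal{L}^\alpha}u_0$, decompose $u_0=\sum_{j\in\mathbb{Z}}\Delta_ju_0$ by the Littlewood--Paley partition, and use the triangle inequality to get
\begin{equation*}
\|u(t)\|_{L^\infty(G)}\leq\sum_{j\in\mathbb{Z}}\|S_t\Delta_ju_0\|_{L^\infty(G)}.
\end{equation*}
Since $S_t$ and $\Delta_j$ are both functions of $\mathcal{L}$, they commute with convolution; invoking the identity $\Delta_ju_0=\Delta_ju_0*\widetilde{\varphi_j}$ I would write $S_t\Delta_ju_0=(\Delta_ju_0)*(U_t\widetilde{\varphi_j})$, so that Young's inequality gives
\begin{equation*}
\|S_t\Delta_ju_0\|_{L^\infty(G)}\leq\|\Delta_ju_0\|_{L^1(G)}\,\|U_t\widetilde{\varphi_j}\|_{L^\infty(G)}.
\end{equation*}

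The crucial observation is that $\phi(r)=r^\alpha$ forces $m_1=\alpha_1=m_2=\alpha_2=\alpha$, so the high- and low-frequency estimates of Theorem~\ref{ResultTime} coincide and there is no boundary mismatch at $j=0$. Choosing the endpoint $\theta=1$ in \eqref{res3-3} for $j\geq0$ and in its low-frequency analogue for $j<0$, the $2^j$-exponent collapses to $N-m_1(p-1+1)-(\alpha_1-m_1)=N-p\alpha$ and the time power to $-p/2$, yielding the single uniform bound
\begin{equation*}
\|U_t\varphi_j\|_{L^\infty(G)}\leq C_{d,p}\,|t|^{-p/2}\,2^{j(N-p\alpha)},\qquad j\in\mathbb{Z}.
\end{equation*}
Applying this to $\widetilde{\varphi_j}=\varphi_{j-1}+\varphi_j+\varphi_{j+1}$ (the shifts $j\mapsto j\pm1$ only alter constants), summing over $j$, and recognising
\begin{equation*}
\sum_{j\in\mathbb{Z}}2^{j(N-p\alpha)}\|\Delta_ju_0\|_{L^1(G)}=\|u_0\|_{\dot{B}^{N-p\alpha}_{1,1}(G)}
\end{equation*}
then delivers the claimed estimate. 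I note that the genuine homogeneity of $\phi$ is what produces the clean power $|t|^{-p/2}$ rather than $(1+|t|)^{-p/2}$, since the matching high/low exponents are invariant under the parabolic scaling $\delta_r$.

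For sharpness I would return to the $p$-dimensional oscillatory integral in \eqref{Imj}, whose phase is $\Phi(\lambda)=-\tfrac{2^{2j}}{2m+d}\lambda\cdot s+t\,\phi(2^{2j}|\lambda|)$ on $\mathbb{R}^p$, with amplitude supported where $|\lambda|\in[\tfrac12,4]$. Taking the dyadic scale $2^{2j}\sim|t|^{-1}$ and a direction $s/|s|$ aligned with $\lambda$, the equation $\nabla_\lambda\Phi=0$ has an interior solution; I would check that the Hessian there is nondegenerate, so Lemma~\ref{asymptotic} applies and the leading term of the expansion is exactly of order $|t|^{-p/2}$, with no further gain. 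It then remains to fix one admissible profile $u_0$ (for instance a single-frequency radial bump) and a sequence of space-time points realising the critical configuration, at which the leading coefficient $a_0$ is shown to be nonzero, giving $\|u(t)\|_{L^\infty(G)}\gtrsim|t|^{-p/2}\|u_0\|_{\dot{B}^{N-p\alpha}_{1,1}(G)}$ as $t\to\infty$.

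The upper bound is essentially bookkeeping once $\theta=1$ is inserted into Theorem~\ref{ResultTime}. The real difficulty lies in the sharpness: one must verify that the phase Hessian at the critical point is nondegenerate (so that the $p/2$ gain is attained and not exceeded) and, more delicately, that the leading amplitude $a_0$—which carries the Laguerre factor $\mathfrak{L}_m^{(d-1)}$ and the Bessel weight $(r|s|)^{-(p-2)/2}J_{(p-2)/2}(r|s|)$—does not vanish after summation in $m$, so that the stationary-phase lower bound survives rather than being annihilated by cancellation.
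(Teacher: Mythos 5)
Your upper-bound argument is essentially the paper's: apply Theorem~\ref{ResultTime} frequency by frequency with $\theta=1$ in \eqref{res3-3} (using $m_1=\alpha_1=m_2=\alpha_2=\alpha$ so the exponent collapses to $N-p\alpha$ and the decay to $|t|^{-p/2}$), pass to $\|S_t\Delta_ju_0\|_{L^\infty}\lesssim\|\Delta_ju_0\|_{L^1}\|U_t\widetilde{\varphi_j}\|_{L^\infty}$ by Young's inequality, and sum. The only difference is cosmetic: you sum over all $j\in\mathbb{Z}$ and land on the homogeneous norm $\dot{B}^{N-p\alpha}_{1,1}$ (which actually matches the statement more literally), whereas the paper groups the low frequencies into $P_{\leq 0}u_0$ and invokes \eqref{res-sum2} with $\theta=p/2$ for that block. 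Both routes are valid here precisely because $\phi(r)=r^\alpha$ is genuinely homogeneous, as you observe.

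The sharpness part is where your proposal stops short. You correctly identify the danger --- that the leading stationary-phase coefficient $a_0$ could vanish after summing the Laguerre series in $m$ --- but you leave its non-vanishing as something ``to be shown,'' and that is exactly the step that needs an idea. The paper disposes of it by construction rather than by verification: it takes $\widehat{u_0}(m,\lambda)=Q(|\lambda|)\delta_{m0}$ with $Q\in C_c^\infty$ supported near $|\lambda|=1$ and $Q(1)=1$, so the $m$-sum reduces to the single term $m=0$, where $\mathfrak{L}_0^{(d-1)}(\tau)=e^{-\tau/2}$; evaluating at $z=0$ and $s=ts_0$ with $s_0=\alpha d^\alpha(0,\dots,0,1)$ turns $u(0,ts_0,t)$ into a single $p$-dimensional oscillatory integral $\int e^{it(-\lambda\cdot s_0+d^\alpha|\lambda|^\alpha)}Q(|\lambda|)|\lambda|^d\,d\lambda$ with strictly positive amplitude at the unique nondegenerate critical point $\lambda_0=(0,\dots,0,1)$ (Hessian $\alpha d^\alpha\,\mathrm{diag}(1,\dots,1,\alpha-1)$), so Lemma~\ref{asymptotic} gives $u(0,ts_0,t)\sim|t|^{-p/2}$ with no cancellation possible. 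Separately, your suggestion to take the dyadic scale $2^{2j}\sim|t|^{-1}$ is not needed and points in the wrong direction: the lower bound is extracted at a \emph{fixed} frequency $|\lambda|\sim1$ with the observation point $s=ts_0$ moving linearly in $t$; the large parameter in the phase is $t$ itself, and coupling $j$ to $t$ would push you into the low-frequency regime where the normalization of $u_0$ in the Besov norm also changes.
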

\begin{proof}~According to \eqref{res3-3} and \eqref{res-sum2} of Theorem \ref{ResultTime} by taking $\theta=\frac{p}{2}$, we have
	\begin{align*}
	\left\|S_t\Delta_ju_0\right\|_{L^\infty(G)} &\lesssim |t|^{-\frac{p}{2}}2^{j(N-p\alpha)}\|\Delta_ju_0\|_{L^1(G)},\,\forall j\geq 1,\\
 \|S_tP_{\leq 0}u_0\|_{L^\infty (G)}&\lesssim |t|^{-\frac{p}{2}}\|P_{\leq 0}u_0\|_{L^1(G)}
	\end{align*}
Finally, we have the following results:
	\begin{equation*}
	\begin{aligned}
	\|u(t)\|_{L^\infty (G)}&=\|S_tu_0\|_{L^\infty (G)}\\
	&=\|S_tP_{\leq 0}u_0+\sum_{j=1}^\infty S_t\Delta_ju_0\|_{L^\infty (G)} \\
	&\leq \|S_tP_{\leq 0}u_0\|_{L^\infty (G)}+\|\sum_{j=1}^\infty S_t\Delta_ju_0\|_{L^\infty (G)}\\
	&\lesssim |t|^{-\frac{p}{2}}\left(\|P_{\leq 0}u_0\|_{L^1(G)}+\sum_{j=1}^\infty 2^{j(N-p\alpha)}\|\Delta_ju_0\|_{L^1(G)}\right)\\
	&=|t|^{-\frac{p}{2}}\|u_0\|_{B^{N-p\alpha}_{1,1}(G)}.
	\end{aligned}
	\end{equation*}

Finally, let us give an example to explain the sharpness of the decay in time. Let $Q\in C_c^\infty(D_0)$ with $Q(1)=1$, where $D_0$ is a small neighborhood of 1 such that $0\notin D_0$. Then
	\begin{equation*}
	\begin{aligned}
	\widehat{u_0}(m,\lambda)&=Q(|\lambda|)\delta_{m0},   \\
	u_0(z,s)&=C_{d,p}\int_{\mathbb{R}^p}e^{-i\lambda\cdot s-\frac{|\lambda||z|^2}{4}}Q(|\lambda|)|\lambda|^d \,d\lambda.
	\end{aligned}
	\end{equation*}
Consider the free fractional Schr\"{o}dinger equation (\ref{FSchrEqu}). We have
	\begin{equation*}	
u(z,s,t)=e^{it\mathcal{L}^\alpha}u_0=C_{d,p}\int_{\mathbb{R}^p}e^{-i\lambda\cdot s-\frac{|\lambda||z|^2}{4}}e^{it(|\lambda|d)^\alpha }Q(|\lambda|)|\lambda|^d \,d\lambda.
	\end{equation*}
In particular, for a fixed $s_0=\alpha d^\alpha(0,\cdots,0,1)$,
	\begin{equation*}
	u(0,ts_0,t)=C_{d,p}\int_{\mathbb{R}^p}e^{it(-\lambda \cdot s_0+|\lambda|^\alpha d^\alpha)}Q(|\lambda|)|\lambda|^d\,d\lambda.
	\end{equation*}
This oscillating integral has a phase $\Psi(\lambda)=-\lambda\cdot s_0+|\lambda|^\alpha d^\alpha$ with a unique critical point $\lambda_0=\frac{s_0}{\alpha d^\alpha}=(0,\cdots,0,1)$, which is not degenerate. Indeed, the Hessian is equal to
  \begin{equation*}
     H(\lambda_0)= \alpha d^\alpha |\lambda|^{\alpha-2}\left(\delta_{ij}+\frac{\alpha-2}{|\lambda|^2}\lambda_i\lambda_j\right)_{ij}\Big|_{\lambda=\lambda_0}=\alpha d^\alpha
      \left\{
\begin{array}{llll}
1 & & & \\
& \ddots& &\\
& & 1 & \\
& & & \alpha-1 \\
\end{array}
\right\}.
  \end{equation*}
By Lemma \ref{asymptotic}, we get
	\begin{equation*}
	u(0,ts_0,t) \sim |t|^{-p/2}.
	\end{equation*}
\end{proof}

We obtain the decay estimate for $S_t=e^{it\mathcal{L}^\alpha}$.
\begin{proposition}
Let $0<\alpha<1$. Assume $2\leq q\leq\infty$, $1\leq r\leq\infty$, $\delta=\frac{1}{2}-\frac{1}{q}$ and $\rho\leq -\left(N-p\alpha\right)\delta$, then we have
\begin{equation*}
    \|S_tf\|_{B_{q,r}^\rho(G)}\lesssim k(t)\|f\|_{B_{q',r}^{-\rho}(G)}, \quad k(t)=\begin{cases}
	|t|^{-\max (\frac{\rho+N\delta}{\alpha},0)} , \, &|t|\leq1,\\
	|t|^{-p\delta}, \,&|t|\geq1.
	\end{cases}
\end{equation*}
\end{proposition}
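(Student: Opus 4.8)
The plan is to deduce the proposition from the frequency-localized decay estimates of Theorem \ref{ResultTime} by a standard interpolation-and-summation scheme. Since $\phi(r)=r^\alpha$ gives $m_1=\alpha_1=m_2=\alpha_2=\alpha$, the term $\theta(\alpha_1-m_1)$ in \eqref{res3-3} vanishes, and combining \eqref{res3-2} with \eqref{res3-3} produces a single one-parameter family
\[
\|U_t\varphi_j\|_{L^\infty(G)}\le C_{d,p}\,|t|^{-\sigma}2^{j(N-2\alpha\sigma)},\qquad 0\le\sigma\le \tfrac{p}{2},\ j\ge1 .
\]
For the low frequencies, since $0<\alpha<1$ gives $\tfrac{N}{2\alpha}>\tfrac N2>\tfrac p2$, estimate \eqref{res-sum2} (available because $\alpha_2=m_2$) yields $\|U_t\psi\|_{L^\infty(G)}\le C_{d,p}(1+|t|)^{-p/2}$. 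Because $S_t=U_t$ commutes with the spectral projectors, I would write $S_t\Delta_jf=(S_t\varphi_j)*\widetilde\Delta_jf$ and $P_{\le0}S_tf=(S_t\psi)*\widetilde P_{\le0}f$, so that Young's inequality turns these pointwise bounds into $L^1\to L^\infty$ operator bounds with the displayed kernel norms.

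Next I would interpolate. The operators $S_t\Delta_j$ and $P_{\le 0}S_t$ have $L^2\to L^2$ norm at most $1$ by unitarity of $e^{it\mathcal{L}^\alpha}$. With $\delta=\tfrac12-\tfrac1q\in[0,\tfrac12]$ one has $2\delta\in[0,1]$, so Riesz--Thorin interpolation between the $L^1\to L^\infty$ and $L^2\to L^2$ bounds (with parameter $2\delta$) gives, for all $2\le q\le\infty$,
\[
\|S_t\Delta_jf\|_{L^q(G)}\le C\,|t|^{-2\delta\sigma}2^{2j\delta(N-2\alpha\sigma)}\|\widetilde\Delta_jf\|_{L^{q'}(G)},
\]
and likewise $\|P_{\le0}S_tf\|_{L^q(G)}\le C(1+|t|)^{-p\delta}\|\widetilde P_{\le0}f\|_{L^{q'}(G)}$.

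Then I would assemble the $B^\rho_{q,r}(G)$ norm and choose $\sigma$ separately in each time regime. Multiplying the localized bound by $2^{j\rho}$ and factoring out the $B^{-\rho}_{q',r}(G)$ summand $2^{-j\rho}\|\widetilde\Delta_jf\|_{L^{q'}}$ leaves the weight $2^{\,2j[\rho+\delta N-2\alpha\delta\sigma]}$. For $|t|\ge1$, taking $\sigma=p/2$ makes the exponent $2[\rho+\delta(N-p\alpha)]\le0$ exactly by the hypothesis $\rho\le-(N-p\alpha)\delta$, so the $\ell^r$-sum over $j\ge1$ is controlled by $\|f\|_{B^{-\rho}_{q',r}(G)}$ and the time factor is $|t|^{-p\delta}$. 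For $|t|\le1$, taking $\sigma_*=\max\!\big(\tfrac{\rho+N\delta}{2\alpha\delta},0\big)$ renders the same exponent $\le0$ (it equals $0$ in the first case and $2(\rho+N\delta)<0$ in the second), while the time factor becomes $|t|^{-2\delta\sigma_*}=|t|^{-\max(\frac{\rho+N\delta}{\alpha},0)}$; one checks $\sigma_*\le p/2$, again from $\rho\le-(N-p\alpha)\delta$, so the dispersive estimate is indeed available at this $\sigma_*$. The low-frequency piece contributes $(1+|t|)^{-p\delta}$, which is $\lesssim k(t)$ in both regimes (comparable for $|t|\ge1$, and $\le1\le k(t)$ for $|t|\le1$). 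Summing $\widetilde\Delta_j\simeq\Delta_{j-1}+\Delta_j+\Delta_{j+1}$ and $\widetilde P_{\le0}$ back into the Besov norm (the finitely many extra low pieces are harmless for any $1\le r\le\infty$) then gives $\|S_tf\|_{B^\rho_{q,r}(G)}\lesssim k(t)\|f\|_{B^{-\rho}_{q',r}(G)}$.

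The main obstacle, and the only genuinely delicate step, is the small-time optimization over $\sigma$: for $|t|\le1$ the factor $|t|^{-\sigma}$ grows with $\sigma$, so one wants $\sigma$ as small as possible, yet high-frequency summability forces $\sigma\ge\tfrac{\rho+N\delta}{2\alpha\delta}$. The break-even value $\sigma_*$ is precisely what produces the stated exponent $\max(\tfrac{\rho+N\delta}{\alpha},0)$ in $k(t)$, and confirming $0\le\sigma_*\le p/2$ (so that Theorem \ref{ResultTime} actually supplies the needed bound) is where the constraint $\rho\le-(N-p\alpha)\delta$ is used a second time.
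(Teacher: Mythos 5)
Your proposal is correct and follows essentially the same route as the paper: frequency‑localized $L^1\to L^\infty$ decay from Theorem \ref{ResultTime}, $L^2$ boundedness by Plancherel, Riesz--Thorin with parameter $2\delta$, and a regime‑dependent choice of the decay parameter ($\sigma=p/2$ for $|t|\geq 1$, the break‑even $\sigma_*$ for $|t|\leq 1$), with the hypothesis $\rho\leq-(N-p\alpha)\delta$ used exactly as in the paper to close both cases. Your explicit unification of \eqref{res3-2} and \eqref{res3-3} into the single range $0\le\sigma\le p/2$ is a slightly cleaner statement of what the paper implicitly does.
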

\begin{proof}
First we prove the case $|t|\geq 1$. It yields from \eqref{res-sum2} of Theorem \ref{ResultTime} by taking $\theta=\frac{p}{2}$ and Plancherel's identity that
\begin{align*}
 \|S_tP_{\leq 0}u_0\|_{L^\infty (G)}&\lesssim |t|^{-\frac{p}{2}}\|P_{\leq 0}u_0\|_{L^1(G)},\\
 \|S_tP_{\leq 0}u_0\|_{L^2 (G)}&\leq \|P_{\leq 0}u_0\|_{L^2(G)}.
	\end{align*}
 It follows from \eqref{res3-3} of Theorem \ref{ResultTime} by taking $\theta=\frac{p}{2}$ and Plancherel's identity that for $j\geq 1$
 \begin{align*}
	\left\|S_t\Delta_ju_0\right\|_{L^\infty(G)} &\lesssim |t|^{-\frac{p}{2}}2^{j(N-p\alpha)}\|\Delta_ju_0\|_{L^1(G)},\\
\left\|S_t\Delta_ju_0\right\|_{L^2(G)} &\leq \|\Delta_ju_0\|_{L^2(G)}.
	\end{align*}
Applying Riesz-Thorin interpolation theorem, for any $j\geq 1$, we have
\begin{align*}
	\|S_tP_{\leq 0}f\|_{L^q (G)}&\lesssim |t|^{-p\delta}\|P_{\leq 0}f\|_{L^{q'}(G)},\\
\|S_t\Delta_jf\|_{L^q (G)}&\lesssim |t|^{-p\delta}2^{2j\left(N-p\alpha\right)\delta}\|\Delta_jf\|_{L^{q'}(G)}\leq |t|^{-p\delta}2^{-2j\rho}\|\Delta_jf\|_{L^{q'}(G)}.
	\end{align*}
Hence,
\begin{equation*}
    \|S_tf\|_{B_{q,r}^\rho(G)}\lesssim |t|^{-p\delta}\|f\|_{B_{q',r}^{-\rho}(G)},\,|t|\geq1.
\end{equation*}

Next we discuss the case $|t|\leq 1$. From \eqref{res-sum2} of Theorem \ref{ResultTime} by taking $\theta=0$, Plancherel's identity and Riesz-Thorin interpolation theorem , we get
\begin{equation*}
    \|S_tP_{\leq 0}f\|_{L^q(G)}\lesssim \|P_{\leq 0}f\|_{L^{q'}(G)}.
\end{equation*}
For $j\geq1$, it follows from \eqref{res3-3} of Theorem \ref{ResultTime} and Plancherel's identity and Riesz-Thorin interpolation theorem that for $0\leq \theta\leq \frac{p}{2}$
\begin{equation*}
\|S_t\Delta_jf\|_{L^q (G)}\lesssim |t|^{-2\theta\delta}2^{2j(N-2\theta\alpha)\delta}\|\Delta_jf\|_{L^{q'}(G)},
\end{equation*}
which indicates
\begin{equation*}
2^{j\rho}\|S_t\Delta_jf\|_{L^q (G)}\lesssim |t|^{-2\theta\delta}2^{2j\left(\rho+(N-2\theta\alpha)\delta\right)}2^{-j\rho}\|\Delta_jf\|_{L^{q'}(G)}.
\end{equation*}
If $-N\delta \leq \rho\leq -\left(N-p\alpha\right)\delta$, then we can choose $0\leq \theta\leq \frac{p}{2}$ such that $\rho+N\delta=2\theta\alpha \delta$. If $\rho<-N\delta$, we take $\theta=0$. Therefore, we have
\begin{equation*}
2^{j\rho}\|S_t\Delta_jf\|_{L^q (G)}\lesssim |t|^{-\max (\frac{\rho+N\delta}{\alpha},0)}2^{-j\rho}\|\Delta_jf\|_{L^{q'}(G)},
\end{equation*}
and
\begin{equation*}
 \|S_tf\|_{B_{q,r}^\rho(G)}\lesssim |t|^{-\max (\frac{\rho+N\delta}{\alpha},0)}\|f\|_{B_{q',r}^{-\rho}(G)},\,|t|\leq 1,
\end{equation*}
which completes the proof of the proposition.

\end{proof}
\begin{proof}

First we prove the case $|t|\geq 1$. It yields from \eqref{res-sum2} of Theorem \ref{ResultTime} by taking $\theta=\frac{p}{2}$, Plancherel's identity and Riesz-Thorin interpolation theorem, we have
\begin{equation*}
    \|S_tP_{\leq 0}f\|_{L^q (G)}\lesssim |t|^{-p\delta}\|P_{\leq 0}f\|_{L^{q'}(G)}.
\end{equation*}
It follows from \eqref{res3-3} of Theorem \ref{ResultTime} by taking $\theta=\frac{p}{2}$, Plancherel's identity and Riesz-Thorin interpolation theorem that for $j\geq 1$
\begin{align*}
    \|S_t\Delta_jf\|_{L^q (G)}&\lesssim |t|^{-p\delta}2^{2j\left(N-p\alpha\right)\delta}\|\Delta_jf\|_{L^{q'}(G)}\\
    &\lesssim |t|^{-p\delta}2^{-2j\rho}\|\Delta_jf\|_{L^{q'}(G)}.
\end{align*}
Hence,
\begin{equation*}
    \|S_tf\|_{B_{q,r}^\rho(G)}\lesssim |t|^{-p\delta}\|f\|_{B_{q',r}^{-\rho}(G)},\,|t|\geq1.
\end{equation*}

Next we discuss the case $|t|\leq 1$. From \eqref{res-sum2} of Theorem \ref{ResultTime} by taking $\theta=0$, Plancherel's identity and Riesz-Thorin interpolation theorem , we get
\begin{equation*}
    \|S_tP_{\leq 0}f\|_{L^q(G)}\lesssim \|P_{\leq 0}f\|_{L^{q'}(G)}.
\end{equation*}
For $j\geq1$, it follows from \eqref{res3-3} of Theorem \ref{ResultTime} and Plancherel's identity and Riesz-Thorin interpolation theorem that for $0\leq \theta\leq \frac{p}{2}$
\begin{equation*}
\|S_t\Delta_jf\|_{L^q (G)}\lesssim |t|^{-2\theta\delta}2^{2j(N-2\theta\alpha)\delta}\|\Delta_jf\|_{L^{q'}(G)},
\end{equation*}
which indicates
\begin{equation*}
2^{k\rho}\|S_t\Delta_jf\|_{L^q (G)}\lesssim |t|^{-2\theta\delta}2^{j\left(2\rho+2(N-2\theta\alpha)\delta\right)}2^{-k\rho}\|\Delta_jf\|_{L^{q'}(G)}.
\end{equation*}
If $-N\delta \leq \rho\leq -\left(N-p\alpha\right)\delta$, then we can choose $0\leq \theta\leq \frac{p}{2}$ such that $\rho+N\delta=2\theta\alpha \delta$. If $\rho<-N\delta$, we take $\theta=0$. Therefore, we have
\begin{equation*}
2^{k\rho}\|S_t\Delta_jf\|_{L^q (G)}\lesssim |t|^{-\max (\frac{\rho+N\delta}{\alpha},0)}2^{-k\rho}\|\Delta_jf\|_{L^{q'}(G)},
\end{equation*}
and
\begin{equation*}
 \|S_tf\|_{B_{q,r}^\rho(G)}\lesssim |t|^{-\max (\frac{\rho+N\delta}{\alpha},0)}\|f\|_{B_{q',r}^{-\rho}(G)},\,|t|\leq 1,
\end{equation*}
which completes the proof of the proposition.
\end{proof}

By Proposition \ref{General-Strichartz}, we obtain the Strichartz estimates for the fractional  Schr\"{o}dinger operator.
\begin{proposition}\label{Intermediate} Let $0<\alpha<1$. For $i=1,2$, let $\gamma_i, q_i\in[2,\infty]$ and $\rho_i\in\mathbb{R}$ such that
	\begin{equation*}
	\begin{aligned}
	&(i)\,\frac{2}{\gamma_i}+\frac{p}{q_i} \leq \frac{p}{2},   \\
	&(ii)\,\rho_i\leq -(N-\alpha p)\left(\frac{1}{2}-\frac{1}{q_i}\right),
	\end{aligned}
	\end{equation*}
 except for $(\gamma_i, q_i, p)=(2,\infty,2)$. Then for any $T>0$, the fractional Schr\"{o}dinger operator $S_t$ satisfies the estimates
	\begin{equation}\label{Strichartz1}
	\|S_tu_0\|_{L^{\gamma_1}(\mathbb{R}, B_{q_1,2}^{\rho_1}(G))}\lesssim\|u_0\|_{L^2(G)},
	\end{equation}
	\begin{equation}\label{Strichartz2}
	\left\|\int_0^tS_{t-\tau}g(\tau,\cdot)\,d\tau\right\|_{L^{\gamma_1}\left([-T,T], B^{\rho_1}_{q_1,2}(G)\right)}\lesssim \|g\|_{L^{\gamma'_2}([-T,T],B_{q'_2,2}^{-\rho_2}(G))}.
	\end{equation}
\end{proposition}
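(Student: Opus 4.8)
The plan is to deduce both estimates from the abstract $TT^*$ machinery of Lemmas \ref{equ}--\ref{L1LW} together with Proposition \ref{General-Strichartz}, once the free propagator $S_t=e^{it\mathcal{L}^\alpha}$ is shown to obey a dispersive bound of the exact shape \eqref{General-Assumption}. Writing $\delta_i=\tfrac12-\tfrac1{q_i}$, the dispersive estimate for $S_t$ established immediately above (obtained by interpolating the $L^\infty$ and $L^2$ bounds coming from \eqref{res3-3} and \eqref{res-sum2} of Theorem \ref{ResultTime}, Plancherel and Riesz--Thorin) furnishes, for every admissible regularity $\rho$, a bound of the form \eqref{General-Assumption} in which
\[
\theta_1=\max\!\Big(\tfrac{\rho+N\delta_i}{\alpha},\,0\Big),\qquad \theta_2=p\delta_i .
\]
Thus the whole task reduces to choosing $\rho$ and verifying that the admissibility hypotheses $(i)$--$(ii)$ force $\theta_1\le\theta_2$ and $\gamma_i\in E(\theta_1,\theta_2)$.

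First I would record the two translations. Since $\tfrac{p}{q_i}=\tfrac p2-p\delta_i$, condition $(i)$ reads exactly $\tfrac2{\gamma_i}\le p\delta_i=\theta_2$, and condition $(ii)$, $\rho\le-(N-p\alpha)\delta_i$, gives $\rho+N\delta_i\le p\alpha\delta_i$, hence $\theta_1\le p\delta_i=\theta_2$. To pin $\gamma_i$ inside $E(\theta_1,\theta_2)$ I would work at the critical regularity $\widetilde\rho_i:=\tfrac{2\alpha}{\gamma_i}-N\delta_i$; condition $(i)$ guarantees $\widetilde\rho_i\le-(N-p\alpha)\delta_i$, so the dispersive bound is licensed there, and with this choice $\theta_1=\tfrac2{\gamma_i}$, so that $\gamma_i=\tfrac2{\theta_1}$ sits at an endpoint of $E(\theta_1,\theta_2)$ (cases $(1)$--$(3)$ of the definition). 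The excluded triple $(\gamma_i,q_i,p)=(2,\infty,2)$ is precisely the degenerate endpoint in which $\theta_2=1$ and $\tfrac2{\gamma_i}=1$ fall outside every case of $E$, and so must be removed. The stated range of $\rho_i$ is then reached from the critical one by the Besov embedding $B^{\widetilde\rho_i}_{q_i,2}(G)\subseteq B^{\rho_i}_{q_i,2}(G)$ of Proposition \ref{properties}(iv), valid whenever $\rho_i\le\widetilde\rho_i$.

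With $\gamma_i\in E(\theta_1,\theta_2)$ verified, the homogeneous estimate \eqref{Strichartz1} is the first conclusion of Proposition \ref{General-Strichartz} applied to $U_t=S_t$, whose HLS input is Lemma \ref{VHLW} run with the kernel $k$. For the retarded estimate \eqref{Strichartz2}, whose left and right indices differ, I would not invoke Proposition \ref{General-Strichartz} directly but instead apply the bilinear duality Lemma \ref{DuilatyXX}: with $H=L^2(G)$, the pre-dual triplets $X_i=L^{\gamma_i'}([-T,T],B^{-\rho_i}_{q_i',2}(G))$ and the maps $A_i\colon g\mapsto\int S_{-\tau}g(\tau)\,d\tau$, the homogeneous bound just proved says each $(X_i,A_i,a_i)$ satisfies condition $(2)$ of Lemma \ref{equ} (here $X_i^*=L^{\gamma_i}([-T,T],B^{\rho_i}_{q_i,2}(G))$). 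Then $A_1^*A_2$ is exactly the Duhamel operator $g\mapsto\int_0^t S_{t-\tau}g\,d\tau$, and Lemma \ref{DuilatyXX} yields $\|A_1^*A_2g\|_{X_1^*}\lesssim a_1a_2\|g\|_{X_2}$, which is \eqref{Strichartz2}.

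I expect the main obstacle to be the exponent bookkeeping of the second paragraph: checking that $(i)$, $(ii)$ and the endpoint exclusion place $\gamma_i$ in the correct case of $E(\theta_1,\theta_2)$ for every admissible $(\gamma_i,q_i,\rho_i)$, and in particular reconciling the full range of $\rho_i$ allowed by $(ii)$ with the critical threshold $\widetilde\rho_i$ down to which the embedding descends. Once the membership $\gamma_i\in E(\theta_1,\theta_2)$ and the admissible regularity are settled, the $TT^*$ and duality steps are formal consequences of Lemmas \ref{equ}--\ref{L1LW} and Proposition \ref{General-Strichartz} already in hand.
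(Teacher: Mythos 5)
Your proposal follows exactly the route the paper takes: it deduces the proposition from the dispersive bound of the form \eqref{General-Assumption} established in the preceding proposition (with $\theta_1=\max(\tfrac{\rho+N\delta}{\alpha},0)$, $\theta_2=p\delta$), verifies admissibility via the set $E(\theta_1,\theta_2)$, and then invokes Proposition \ref{General-Strichartz} together with the $TT^*$/duality Lemmas \ref{equ}--\ref{L1LW} for the mixed-index retarded estimate. The paper itself gives no further detail beyond citing Proposition \ref{General-Strichartz}, so your write-up is the same argument with the exponent bookkeeping made explicit.
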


\begin{theorem}
Under the same hypotheses as in Proposition \ref{Intermediate}, the solution $u$ of
the fractional Schr\"{o}dinger equation \eqref{FSchrEqu} satisfies the following estimate
	\begin{equation*}
	\|u\|_{L^{\gamma_1}([-T,T], B^{\rho_1}_{q_1,2}(G))} \lesssim \|u_0\|_{L^2(G)}+\|g\|_{L^{\gamma'_2}([-T,T],B_{q_2,2}^{-\rho_2}(G))}.
	\end{equation*}
\end{theorem}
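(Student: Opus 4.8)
The plan is to read off the estimate directly from the Duhamel representation of the solution together with the two bounds already established in Proposition \ref{Intermediate}. First I would recall that, by Duhamel's principle, the solution of \eqref{FSchrEqu} is
\begin{equation*}
u(t)=S_tu_0-i\int_0^tS_{t-\tau}g(\tau,\cdot)\,d\tau,
\end{equation*}
so that the entire task reduces to controlling each of the two summands in the norm $L^{\gamma_1}([-T,T],B^{\rho_1}_{q_1,2}(G))$.

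Next I would apply the triangle inequality in $L^{\gamma_1}([-T,T],B^{\rho_1}_{q_1,2}(G))$ to this representation, obtaining
\begin{equation*}
\|u\|_{L^{\gamma_1}([-T,T],B^{\rho_1}_{q_1,2}(G))}\leq \|S_tu_0\|_{L^{\gamma_1}([-T,T],B^{\rho_1}_{q_1,2}(G))}+\left\|\int_0^tS_{t-\tau}g(\tau,\cdot)\,d\tau\right\|_{L^{\gamma_1}([-T,T],B^{\rho_1}_{q_1,2}(G))}.
\end{equation*}
For the homogeneous term, I would observe that since $[-T,T]\subseteq\mathbb{R}$ the time-restricted norm is dominated by the full-line norm, and then invoke \eqref{Strichartz1} of Proposition \ref{Intermediate} to bound it by $\|u_0\|_{L^2(G)}$. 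For the inhomogeneous term, the estimate \eqref{Strichartz2} of Proposition \ref{Intermediate} applies verbatim and yields the bound $\|g\|_{L^{\gamma'_2}([-T,T],B_{q'_2,2}^{-\rho_2}(G))}$. Summing the two bounds gives the claimed inequality.

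Since we work under the very hypotheses of Proposition \ref{Intermediate}, the admissibility conditions $(i)$ and $(ii)$ (and the exclusion of the endpoint $(\gamma_i,q_i,p)=(2,\infty,2)$) are exactly what guarantees that both \eqref{Strichartz1} and \eqref{Strichartz2} are available, so no further verification is needed. In this sense there is no genuine obstacle: the argument is a direct assembly of the homogeneous and inhomogeneous Strichartz estimates through Duhamel's formula. The only point requiring a moment's care is the passage from the global-in-time bound \eqref{Strichartz1} on $\mathbb{R}$ to the local bound on $[-T,T]$, which is immediate by monotonicity of the $L^{\gamma_1}_t$ norm under restriction of the time interval.
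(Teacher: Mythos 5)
Your argument is correct and coincides with the paper's intended proof: the paper states this theorem without a written proof precisely because it follows immediately from the Duhamel representation, the triangle inequality, and the two estimates \eqref{Strichartz1}--\eqref{Strichartz2} of Proposition \ref{Intermediate}, exactly as you assemble them. (Note only that your bound naturally produces $B_{q'_2,2}^{-\rho_2}$ on the right-hand side, consistent with \eqref{Strichartz2}; the $q_2$ appearing in the theorem's statement is evidently a typographical slip.)
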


Finally, by the embedding relation between the  Besov space and Lebesgue space, we prove the Strichartz inequalities on Lebesgue spaces.
\begin{corollary}\label{SLebesgue} Let $0<\alpha<1$ and $u$ be the solution of the fractional Schr\"{o}dinger equation \eqref{FSchrEqu}. Suppose $\gamma\in[2(N-p\alpha)/p,+\infty)$ and $q\geq2$ satisfy
	\begin{equation*}
	\frac{2\alpha}{\gamma}+\frac{N}{q}=\dfrac{N}{2}-1.
	\end{equation*}
Then we have the following estimate
	\begin{equation*}
	\|u\|_{L^\gamma([-T,T],L^q(G))} \lesssim \|u_0\|_{H^1(G)}+\|g\|_{L^1([-T,T],H^1(G))}.
	\end{equation*}
\end{corollary}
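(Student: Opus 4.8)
The plan is to deduce the Lebesgue-space bound from the Besov-space Strichartz estimates \eqref{Strichartz1}--\eqref{Strichartz2} of Proposition~\ref{Intermediate}, by combining the embedding $B^0_{q,2}(G)\subseteq L^q(G)$ with a one-derivative regularity shift that converts the $L^2$ data norm and the $L^1_t L^2$ source norm appearing there into the $H^1$ norms requested here. Concretely, I would first use property (vii) of Proposition~\ref{properties} to bound, at each fixed $t$,
\[
\|u(t)\|_{L^q(G)}\lesssim \|u(t)\|_{B^0_{q,2}(G)},
\]
and then invoke property (v) of Proposition~\ref{properties} (equivalently \eqref{LPLP}) to write $\|u(t)\|_{B^0_{q,2}(G)}\sim\|\mathcal{L}^{1/2}u(t)\|_{B^{-1}_{q,2}(G)}$. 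Since $\mathcal{L}^{1/2}$ commutes with $S_t=e^{it\mathcal{L}^\alpha}$ and with the Duhamel integral, $\mathcal{L}^{1/2}u$ is itself the solution of \eqref{FSchrEqu} with data $\mathcal{L}^{1/2}u_0$ and source $\mathcal{L}^{1/2}g$, so \eqref{Strichartz1}--\eqref{Strichartz2} apply to it directly.

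For the parameters I would take $\gamma_1=\gamma$, $q_1=q$, $\rho_1=-1$ on the solution side, and $\gamma_2=\infty$, $q_2=2$, $\rho_2=0$ on the source side. With these choices the inhomogeneous norm becomes $\|\mathcal{L}^{1/2}g\|_{L^1([-T,T],B^0_{2,2}(G))}=\|\mathcal{L}^{1/2}g\|_{L^1([-T,T],L^2(G))}\le\|g\|_{L^1([-T,T],H^1(G))}$, and the data norm becomes $\|\mathcal{L}^{1/2}u_0\|_{L^2(G)}\le\|u_0\|_{H^1(G)}$, using $H^1(G)=B^1_{2,2}(G)$, $B^0_{2,2}(G)=L^2(G)$, and the equivalence of the Sobolev norm with $\|\cdot\|_{L^2}+\|\mathcal{L}^{1/2}\cdot\|_{L^2}$. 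Taking $L^\gamma$ in time of the pointwise embedding and feeding in \eqref{Strichartz1}--\eqref{Strichartz2} applied to $\mathcal{L}^{1/2}u$ then yields
\[
\|u\|_{L^\gamma([-T,T],L^q(G))}\lesssim\|\mathcal{L}^{1/2}u\|_{L^\gamma([-T,T],B^{-1}_{q,2}(G))}\lesssim\|u_0\|_{H^1(G)}+\|g\|_{L^1([-T,T],H^1(G))},
\]
which is the claim.

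The step I expect to require the most care is verifying that these indices meet the admissibility hypotheses (i)--(ii) of Proposition~\ref{Intermediate}. For $i=2$ they are immediate: $\rho_2=0$ and $q_2=2$ give equality in (ii), and (i) reads $\frac{p}{2}\le\frac{p}{2}$. The genuine work is for $i=1$. Writing $\delta=\tfrac12-\tfrac1q$, the scaling identity $\frac{2\alpha}{\gamma}+\frac{N}{q}=\frac{N}{2}-1$ becomes $\frac{2}{\gamma}=\frac{N\delta-1}{\alpha}$; substituting this, both condition (i), $\frac{2}{\gamma}+\frac{p}{q}\le\frac{p}{2}$, and condition (ii) at $\rho_1=-1$, $-1\le-(N-\alpha p)\delta$, collapse to the single inequality $(N-\alpha p)\delta\le1$. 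The final point is that this follows from the hypothesis $\gamma\ge2(N-p\alpha)/p$: the latter reads $\frac{2}{\gamma}\le\frac{p}{N-p\alpha}$, so clearing denominators in $\frac{N\delta-1}{\alpha}\le\frac{p}{N-p\alpha}$ gives exactly $(N-p\alpha)\delta\le1$. I would then record the endpoint bookkeeping: $\frac{2}{\gamma}=\frac{N\delta-1}{\alpha}>0$ forces $q>2$, while Remark~\ref{p-d} ($p+1\le2d$, hence $N=2d+2p\ge3p+1$) gives $2(N-p\alpha)/p>2$ so that $\gamma>2$, ensuring $\gamma_1,q_1\ge2$ and that the excluded case $(\gamma_1,q_1,p)=(2,\infty,2)$ never arises. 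Thus Proposition~\ref{Intermediate} is applicable throughout and the argument closes.
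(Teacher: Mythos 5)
Your proof is correct and follows essentially the same route as the paper: both deduce the corollary from the Besov-space Strichartz estimates \eqref{Strichartz1}--\eqref{Strichartz2} of Proposition~\ref{Intermediate} via a one-derivative shift to $H^1$ data together with a Besov-to-Lebesgue embedding. The only difference is organizational --- the paper applies the proposition at a regularity $\rho$ with $\rho+1\geq 0$ and uses the embedding $B^{\rho+1}_{q,2}(G)\subset L^{q_1}(G)$ from Proposition~\ref{properties} (vi)--(vii) to raise the integrability exponent, whereas you take $\rho_1=-1$ so that only $B^0_{q,2}(G)\subseteq L^q(G)$ is needed; your explicit verification that the scaling relation and the hypothesis $\gamma\geq 2(N-p\alpha)/p$ reduce conditions (i)--(ii) to $(N-p\alpha)\delta\leq 1$, and that the excluded endpoint never occurs, supplies bookkeeping the paper leaves implicit.
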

\begin{proof} Under the same hypotheses as in Proposition \ref{Intermediate} and according to \eqref{Strichartz1} and \eqref{Strichartz2}, we have
	\begin{equation*}	
	\begin{aligned}
	&\quad\|u\|_{L^\gamma([-T,T],B^{\rho+1}_{q,2}(G))} \\&\leq \left\|S_tu_0\right\|_{L^\gamma([-T,T],B^{\rho+1}_{q,2}(G))}+\left\|\int_{0}^{t}S_{t-\tau}g(\tau,\cdot)d\tau\right\|_{L^\gamma([-T,T],B^{\rho+1}_{q,2}(G))}.  \\
	 &\lesssim \|u_0\|_{H^1(G)}+\|g\|_{L^1([-T,T],H^1(G))}.
	\end{aligned}
	\end{equation*}
Combining with the fact $B^{\rho+1}_{q,2}(G)\subset L^{q_1}(G)$ where $\rho+1\geq 0$ and $\frac{1}{q_1}=\frac{1}{q}-\frac{\rho+1}{N}$, we have
	\begin{equation*}
	\|u\|_{L^\gamma([-T,T],L^{q_1}(G))} \leq C\left(\|u_0\|_{H^1(G)}+\|g\|_{L^1([-T,T],H^1(G))}\right).
	\end{equation*}
 \end{proof}

\begin{remark}
	When $\alpha=\frac{1}{2}$, we recover the Strichartz estimates for the wave operator on the Heisenberg groups in \cite{BGX2000} and on H-type groups in \cite{H2005}.
\end{remark}
%---------------------------------------------------------------
\noindent\\[2mm]
\subsection{The fourth-order Schr\"{o}dinger equation}
Besides, we consider the fourth-order Schr\"{o}dinger equation
	\begin{equation}\label{SSchrEqu}
	\begin{cases}
	i\partial_tu+\mathcal{L}^2u+\mathcal{L}u=g,\\
	u|_{t=0}=u_0.
	\end{cases}
	\end{equation}
By Duhamel's principle, the solution is formally given by
	\begin{equation*}\label{solution3}
	u(t)=U_tu_0-i\int_0^tU_{t-\tau}g(\tau,\cdot)\,d\tau,
	\end{equation*}
where $U_t=e^{it(\mathcal{L}^2+\mathcal{L})}$ and it corresponds to the case when $\phi(r)=r^2+r$. By a simple calculation,
	\begin{equation*}
	\phi'(r)=2r+1, \quad
	\phi''(r)=2.
	\end{equation*}
We know $\phi$ satisfies (H1)–(H4) with $m_1=\alpha_1=\alpha_2=2$, $m_2=1$.

\begin{theorem}
Let $u$ be the solution of the free fourth-order Schr\"{o}dinger equation \eqref{FSchrEqu} (i.e. with $g=0$). We have the following sharp estimate
	\begin{equation*}
	\|u(t)\|_{L^\infty (G)}\lesssim|t|^{-\frac{p}{2}}\|u_0\|_{\dot{B}^{N-2p}_{1,1}(G)}.
	\end{equation*}
\end{theorem}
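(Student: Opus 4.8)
The plan is to mirror the argument already carried out for the free fractional Schrödinger equation, reading off the relevant exponents from Theorem \ref{ResultTime} for the profile $\phi(r)=r^2+r$, for which $m_1=\alpha_1=\alpha_2=2$ and $m_2=1$ as recorded just above the statement. First I would extract the two building-block kernel estimates. For the high-frequency blocks $j\geq 1$, since $\phi$ satisfies (H1) and (H3), I apply \eqref{res3-3} with $\theta=1$: the time exponent is $(p-1+\theta)/2=p/2$, and because $\alpha_1-m_1=0$ and $m_1=2$ the spatial exponent collapses to $N-m_1(p-1+\theta)-\theta(\alpha_1-m_1)=N-2p$, giving $\|U_t\varphi_j\|_{L^\infty(G)}\lesssim |t|^{-p/2}2^{j(N-2p)}$. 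For the low-frequency part I note that $\phi$ satisfies (H2) and (H4) but with $\alpha_2=2>1=m_2$, so \eqref{res-sum2} is not available; instead I invoke \eqref{res-sum2-n}, whose hypothesis $p<(N-(\alpha_2-m_2))/m_2=N-1$ holds because $p+1\leq 2d$ (Remark \ref{p-d}) forces $N=2d+2p\geq 3p+1$. This yields $\|U_t\psi\|_{L^\infty(G)}\lesssim(1+|t|)^{-p/2}\lesssim|t|^{-p/2}$.

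Next I would convert these kernel bounds into bounds on Littlewood–Paley blocks by Young's inequality, using the fattened-projector identity $\Delta_j u_0=(\Delta_j u_0)*\widetilde{\varphi_j}$, so that $\|U_t\Delta_j u_0\|_{L^\infty(G)}=\|(\Delta_j u_0)*U_t\widetilde{\varphi_j}\|_{L^\infty(G)}\leq\|\Delta_j u_0\|_{L^1(G)}\|U_t\widetilde{\varphi_j}\|_{L^\infty(G)}$, and similarly for the term $U_t P_{\leq 0}u_0$. Summing the decomposition $u(t)=U_t u_0=U_t P_{\leq 0}u_0+\sum_{j\geq 1}U_t\Delta_j u_0$ and pulling out the common factor $|t|^{-p/2}$ gives
\[
\|u(t)\|_{L^\infty(G)}\lesssim |t|^{-p/2}\Bigl(\|P_{\leq 0}u_0\|_{L^1(G)}+\sum_{j\geq 1}2^{j(N-2p)}\|\Delta_j u_0\|_{L^1(G)}\Bigr)=|t|^{-p/2}\|u_0\|_{B^{N-2p}_{1,1}(G)},
\]
which is exactly the claimed decay with Besov regularity $N-2p$.

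For the sharpness I would reuse the explicit profile from the fractional case: take $\widehat{u_0}(m,\lambda)=Q(|\lambda|)\delta_{m0}$ with $Q\in C_c^\infty$ supported away from the origin and $Q(1)=1$, so that only the $m=0$ term survives and $\phi(d|\lambda|)=d^2|\lambda|^2+d|\lambda|$. Evaluating $U_t u_0$ at the point $(0,ts_0)$ produces the oscillatory integral $\int_{\mathbb{R}^p}e^{it\Psi(\lambda)}Q(|\lambda|)|\lambda|^d\,d\lambda$ with phase $\Psi(\lambda)=-\lambda\cdot s_0+d^2|\lambda|^2+d|\lambda|$. Choosing $s_0=d(2d+1)(0,\dots,0,1)$ makes $\lambda_0=(0,\dots,0,1)$ the unique critical point, and I would check that its Hessian equals $2d^2 I+d(I-\lambda_0\lambda_0^{\mathsf T})$, which is positive definite (eigenvalue $2d^2+d$ on the $(p-1)$ tangential directions and $2d^2$ in the radial direction), hence nondegenerate. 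Lemma \ref{asymptotic} then yields $u(0,ts_0,t)\sim|t|^{-p/2}$, so the rate cannot be improved.

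The summation and the Young-inequality bookkeeping are routine, essentially transcribed from the fractional proof. The one genuinely new point, and the step I expect to require the most care, is the low-frequency estimate: because $\phi=r^2+r$ is non-homogeneous, $m_2=1\neq 2=\alpha_2$, so I must route through \eqref{res-sum2-n} rather than \eqref{res-sum2} and verify its dimensional restriction $p<N-1$ from the structural inequality $p+1\leq 2d$. A secondary subtlety in the sharpness argument is that the linear term $d|\lambda|$ shifts the location of the critical point (to $s_0=d(2d+1)\lambda_0$, in place of $s_0=\alpha d^\alpha\lambda_0$ in the fractional case) but leaves the Hessian nondegenerate, so the stationary-phase conclusion survives unchanged.
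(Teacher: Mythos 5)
Your argument is correct and follows essentially the same route as the paper: the inequality is obtained exactly as in the fractional Schr\"odinger case by combining \eqref{res3-3} with $\theta=1$ (giving $|t|^{-p/2}2^{j(N-2p)}$) and the low-frequency bound \eqref{res-sum2-n} (whose hypothesis you correctly verify, since $\alpha_2=2>1=m_2$ rules out \eqref{res-sum2}), and the sharpness is shown with the same stationary-phase example $\widehat{u_0}(m,\lambda)=Q(|\lambda|)\delta_{m0}$ evaluated at $(0,ts_0)$. The only difference is your normalization $s_0=d(2d+1)e_p$, which places the critical point at $|\lambda_0|=1$ inside $\mathrm{supp}\,Q$ (the paper takes $s_0=3de_p$ with $\lambda_0=\frac{1}{d}e_p$); both Hessians are nondegenerate and yield the same $|t|^{-p/2}$ asymptotics, and yours is arguably the more careful choice given that $Q$ is supported near $1$.
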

\begin{proof}
Since the proof of the inequality is similar to the case of the fractional Schr\"{o}dinger equation, we only show the sharpness of the decay. For the same $Q$, $D_0$ and $u_0$, and if $u$ is the solution of \eqref{SSchrEqu} when $g=0$, we have
	\begin{equation*}	
	u(z,s,t)=e^{it(\mathcal{L}^2+\mathcal{L})}u_0=C_{d,p}\int_{\mathbb{R}^p}e^{-i\lambda\cdot s-\frac{|\lambda||z|^2}{4}}e^{it\left(d^2|\lambda|^2+d|\lambda|\right)}Q(|\lambda|)|\lambda|^d \,d\lambda.
	\end{equation*}
In particular,
	\begin{equation*}
	u(0,ts,t)=C_{d,p}\int_{\mathbb{R}^p}e^{it(-\lambda\cdot s+d^2|\lambda|^2+d|\lambda|)}Q(|\lambda|)|\lambda|^d \,d\lambda.\end{equation*}
Consider $u(0,ts_0,t)$ for some fixed $s_0=3d(0,\cdots,0,1)$ such that $|s_0|=3d$.
This oscillating integral has a phase $\Psi(\lambda)=-\lambda\cdot s_0+d^2|\lambda|^2+d|\lambda|$ with a unique critical point $\lambda_0=\frac{s_0}{3d^2}=\frac{1}{d}(0,\cdots,0,1)$, which is not degenerate. Indeed, the Hessian is equal to
  \begin{equation*}
     H(\lambda_0)= \left(\left(2d^2+\frac{d}{|\lambda|}\right)\delta_{ij}-\frac{d}{|\lambda|^3}\lambda_i\lambda_j\right)_{ij}\Big|_{\lambda=\lambda_0}=d^2
      \left\{
\begin{array}{llll}
3 & & & \\
& \ddots& &\\
& & 3 & \\
& & & 2 \\
\end{array}
\right\}.
  \end{equation*}
Applying Lemma \ref{asymptotic}, we have
	\begin{equation*}
	u(0,ts_0,t) \sim |t|^{-\frac{p}{2}}.
	\end{equation*}
\end{proof}
\begin{proposition}
Assume $2\leq q\leq\infty$, $1\leq r\leq\infty$, $\delta=\frac{1}{2}-\frac{1}{q}$ and $\rho\leq -\left(N-2p\right)\delta$. We have
\begin{equation*}
    \|U_tf\|_{B_{q,r}^\rho(G)}\lesssim k(t)\|f\|_{B_{q',r}^{-\rho}(G)}, \quad k(t)=\begin{cases}
	|t|^{-\max (\frac{\rho+N\delta}{2},0)} , \, &|t|\leq1,\\
	|t|^{-p\delta}, \,&|t|\geq1.
	\end{cases}
\end{equation*}
\end{proposition}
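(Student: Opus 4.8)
The plan is to mirror the template of the preceding proposition for $S_t=e^{it\mathcal{L}^\alpha}$, reducing the Besov bound to dyadic $L^{q'}\to L^q$ estimates obtained by Riesz--Thorin interpolation between the $L^1\to L^\infty$ dispersive bounds of Theorem \ref{ResultTime} and the trivial $L^2\to L^2$ bound, and then summing in the $B^\rho_{q,r}$ norm. For $\phi(r)=r^2+r$ we have $m_1=\alpha_1=\alpha_2=2$ and $m_2=1$. Since $U_t=e^{it(\mathcal L^2+\mathcal L)}$ is unitary on $L^2(G)$ and commutes with the spectral projectors $\Delta_j$ and $P_{\le0}$, one has $\|U_t\Delta_jf\|_{L^2}=\|\Delta_jf\|_{L^2}$ and $\|U_tP_{\le0}f\|_{L^2}=\|P_{\le0}f\|_{L^2}$. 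Using the reproducing identity $f*\varphi_j=f*\varphi_j*\widetilde{\varphi_j}$ I would write $U_t\Delta_jf=(\Delta_jf)*(U_t\widetilde{\varphi_j})$, so that Young's inequality converts the kernel bound $\|U_t\varphi_j\|_{L^\infty}$ into $\|U_t\Delta_jf\|_{L^\infty}\lesssim\|U_t\varphi_j\|_{L^\infty}\|\Delta_jf\|_{L^1}$ (and analogously for the low-frequency block through $\psi$).

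For the regime $|t|\ge1$ I would take $\theta=1$ in \eqref{res3-3}, which (using $\alpha_1=m_1=2$) gives $\|U_t\varphi_j\|_{L^\infty}\lesssim|t|^{-p/2}2^{j(N-2p)}$ for $j\ge1$; for the low-frequency part I would invoke \eqref{res-sum2-n}, noting that $\alpha_2=2>m_2=1$ and that its hypothesis $p<\frac{N-(\alpha_2-m_2)}{m_2}=N-1=2d+2p-1$ holds automatically, whence $\|U_t\psi\|_{L^\infty}\lesssim(1+|t|)^{-p/2}$. Interpolating each of these with the $L^2$ bound produces $\|U_t\Delta_jf\|_{L^q}\lesssim|t|^{-p\delta}2^{2j(N-2p)\delta}\|\Delta_jf\|_{L^{q'}}$ and $\|U_tP_{\le0}f\|_{L^q}\lesssim|t|^{-p\delta}\|P_{\le0}f\|_{L^{q'}}$. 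The weight $2^{j\rho}2^{2j(N-2p)\delta}$ is $\lesssim2^{-j\rho}$ for $j\ge1$ precisely when $\rho\le-(N-2p)\delta$, which is the standing hypothesis, so the $j$-sum collapses to $\|f\|_{B^{-\rho}_{q',r}}$ and yields the factor $|t|^{-p\delta}$.

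For $|t|\le1$ the key observation is that, because $\alpha_1=m_1=2$, the two high-frequency estimates merge into one family: \eqref{res3-2} gives $\|U_t\varphi_j\|_{L^\infty}\lesssim|t|^{-\theta}2^{j(N-4\theta)}$ for $0\le\theta\le\frac{p-1}{2}$, while reparametrizing \eqref{res3-3} by $\theta'=\frac{p-1+\theta}{2}$ (so that $\alpha_1-m_1=0$ kills the last exponent) extends the same bound $|t|^{-\theta'}2^{j(N-4\theta')}$ to $\frac{p-1}{2}\le\theta'\le\frac p2$; together they give $\|U_t\varphi_j\|_{L^\infty}\lesssim|t|^{-\theta}2^{j(N-2m_1\theta)}$ for all $0\le\theta\le\frac p2$. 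Interpolation yields $\|U_t\Delta_jf\|_{L^q}\lesssim|t|^{-2\theta\delta}2^{2j(N-2m_1\theta)\delta}\|\Delta_jf\|_{L^{q'}}$, and I would choose the smallest admissible $\theta$, namely $\theta=\frac{\rho+N\delta}{2m_1\delta}$ when $-N\delta\le\rho\le-(N-2p)\delta$ (and $\theta=0$ when $\rho<-N\delta$), so that $\rho+(N-2m_1\theta)\delta\le0$ makes the $j$-sum summable while producing the decay $|t|^{-2\theta\delta}=|t|^{-\max((\rho+N\delta)/2,\,0)}$. The ceiling $\theta\le\frac p2$ is exactly the hypothesis $\rho\le-(N-2p)\delta$, and the low-frequency block contributes only a bounded term since $(1+|t|)^{-p/2}\sim1$ on $|t|\le1$, which is absorbed by $k(t)\ge1$.

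The main obstacle is the bookkeeping in the small-time regime: one must verify carefully that \eqref{res3-2} and \eqref{res3-3} glue along $\theta'=\frac{p-1+\theta}{2}$ into the clean one-parameter family valid for all $\theta\in[0,\frac p2]$ (this gluing uses $\alpha_1=m_1$, which fails for general $\phi$), and then that the optimal choice of $\theta$ simultaneously respects the summability condition $\rho+(N-2m_1\theta)\delta\le0$ and the ceiling $\theta\le\frac p2$. Everything else is the standard interpolation-and-summation machinery already carried out for the fractional Schr\"odinger case.
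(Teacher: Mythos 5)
Your proposal is correct and follows essentially the same route as the paper: interpolate the $L^1\to L^\infty$ kernel bounds \eqref{res3-2}, \eqref{res3-3}, \eqref{res-sum1}, \eqref{res-sum2-n} against the trivial $L^2\to L^2$ bound via Riesz--Thorin, treat $|t|\ge 1$ with $\theta=1$ in \eqref{res3-3} and $|t|\le 1$ by optimizing $\theta\in[0,\tfrac p2]$ so that $\rho+N\delta=4\theta\delta$, and sum over the dyadic blocks. Your explicit gluing of \eqref{res3-2} and \eqref{res3-3} into a single one-parameter family (using $\alpha_1=m_1$) and the Young's-inequality reduction via $f*\varphi_j*\widetilde{\varphi_j}$ are exactly the steps the paper leaves implicit.
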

\begin{proof}
First we prove the case $|t|\geq 1$. It yields from \eqref{res-sum2-n} of Theorem \ref{ResultTime}, Plancherel's identity and Riesz-Thorin interpolation theorem, we have
\begin{equation*}
    \|U_tP_{\leq 0}f\|_{L^q (G)}\lesssim |t|^{-p\delta}\|P_{\leq 0}f\|_{L^{q'}(G)}.
\end{equation*}
It follows from \eqref{res3-3} of Theorem \ref{ResultTime} by taking $\theta=1$, Plancherel's identity and Riesz-Thorin interpolation theorem that for $j\geq 1$
\begin{align*}
    \|U_t\Delta_jf\|_{L^q (G)}&\lesssim |t|^{-p\delta}2^{2j\left(N-2p\right)\delta}\|\Delta_jf\|_{L^{q'}(G)}\\
    &\lesssim |t|^{-p\delta}2^{-2j\rho}\|\Delta_jf\|_{L^{q'}(G)}.
\end{align*}
Hence,
\begin{equation*}
    \|U_tf\|_{B_{q,r}^\rho(G)}\lesssim |t|^{-p\delta}\|f\|_{B_{q',r}^{-\rho}(G)},\,|t|\geq1.
\end{equation*}

Next we discuss the case $|t|\leq 1$. From \eqref{res-sum1} of Theorem \ref{ResultTime} by taking $\theta=0$, Plancherel's identity and Riesz-Thorin interpolation theorem , we get
\begin{equation*}
    \|U_tP_{\leq 0}f\|_{L^q(G)}\lesssim \|P_{\leq 0}f\|_{L^{q'}(G)}.
\end{equation*}
For $j\geq1$, it follows from \eqref{res3-2} and \eqref{res3-3} of Theorem \ref{ResultTime}, Plancherel's identity and Riesz-Thorin interpolation theorem that for $0\leq \theta\leq \frac{p}{2}$
\begin{equation*}
\|U_t\Delta_jf\|_{L^q (G)}\lesssim |t|^{-2\theta\delta}2^{2j(N-4\theta)\delta}\|\Delta_jf\|_{L^{q'}(G)},
\end{equation*}
which indicates
\begin{equation*}
2^{j\rho}\|U_t\Delta_jf\|_{L^q (G)}\lesssim |t|^{-2\theta\delta}2^{2j\left(\rho+(N-4\theta)\delta\right)}2^{-j\rho}\|\Delta_jf\|_{L^{q'}(G)}.
\end{equation*}
If $-N\delta \leq \rho\leq -\left(N-2p\right)\delta$, then we can choose $0\leq \theta\leq \frac{p}{2}$ such that $\rho+N\delta=4\theta \delta$. If $\rho<-N\delta$, we take $\theta=0$. Therefore, we have
\begin{equation*}
2^{j\rho}\|U_t\Delta_jf\|_{L^q (G)}\lesssim |t|^{-\max (\frac{\rho+N\delta}{2},0)}2^{-j\rho}\|\Delta_jf\|_{L^{q'}(G)},
\end{equation*}
and
\begin{equation*}
 \|U_tf\|_{B_{q,r}^\rho(G)}\lesssim |t|^{-\max (\frac{\rho+N\delta}{2},0)}\|f\|_{B_{q',r}^{-\rho}(G)},\,|t|\leq 1,
\end{equation*}
which completes the proof of the proposition.
\end{proof}

By Proposition \ref{General-Strichartz}, we obtain the Strichartz estimates for the fourth-order Schr\"odinger operator.
\begin{proposition}\label{SResultSB} For $i=1,2$, let $\gamma_i, q_i\in[2,\infty]$ and $\rho_i\in\mathbb{R}$ such that
	\begin{equation*}
	\begin{aligned}
	&(i)\,\frac{2}{\gamma_i}+\frac{p}{q_i} \leq \frac{p}{2},   \\
	&(ii)\,\rho_i\leq -(N-2p)\left(\frac{1}{2}-\frac{1}{q_i}\right),
	\end{aligned}
	\end{equation*}
 except for $(\gamma_i, q_i, p)=(2,\infty,2)$. Then for any $T>0$, the fourth-order Schr\"odinger operator $U_t$ satisfies the estimates
	\begin{equation*}
	\|U_tu_0\|_{L^{\gamma_1}(\mathbb{R}, B_{q_1,2}^{\rho_1}(G))}\lesssim\|u_0\|_{L^2(G)},
	\end{equation*}
	\begin{equation*}
	\left\|\int_0^tU_{t-\tau}g(\tau,\cdot)\,d\tau\right\|_{L^{\gamma_1}\left([-T,T], B^{\rho_1}_{q_1,2}(G)\right)}\lesssim \|g\|_{L^{\gamma'_2}([-T,T],B_{q'_2,2}^{-\rho_2}(G))}.
	\end{equation*}
\end{proposition}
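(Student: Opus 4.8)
The plan is to derive both inequalities from the dispersive estimate for $U_t=e^{it(\mathcal{L}^2+\mathcal{L})}$ established in the preceding proposition, feeding it into the abstract machinery of Proposition \ref{General-Strichartz} and the duality Lemma \ref{DuilatyXX}, exactly as in the fractional case of Proposition \ref{Intermediate}. Writing $\delta_i=\frac12-\frac1{q_i}$, that proposition (with second Besov index $r=2$) furnishes precisely the hypothesis \eqref{General-Assumption}, namely
\begin{equation*}
\|U_tf\|_{B_{q_i,2}^{\rho_i}(G)}\lesssim k_i(t)\|f\|_{B_{q_i',2}^{-\rho_i}(G)},\qquad \theta_1^{(i)}=\max\Big(\tfrac{\rho_i+N\delta_i}{2},0\Big),\quad \theta_2^{(i)}=p\delta_i,
\end{equation*}
with $k_i(t)=|t|^{-\theta_1^{(i)}}$ for $|t|\le1$ and $k_i(t)=|t|^{-\theta_2^{(i)}}$ for $|t|>1$. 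First I would fix $i$ and check that the two admissibility hypotheses place $\gamma_i$ in the set $E(\theta_1^{(i)},\theta_2^{(i)})$ so that Proposition \ref{General-Strichartz} applies.

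The verification is pure bookkeeping on the exponents. Condition $(ii)$, $\rho_i\le-(N-2p)\delta_i$, is exactly $\tfrac{\rho_i+N\delta_i}{2}\le p\delta_i$, hence $\theta_1^{(i)}\le\theta_2^{(i)}$, so the pair $(\theta_1^{(i)},\theta_2^{(i)})$ is legitimate. Condition $(i)$, $\tfrac{2}{\gamma_i}+\tfrac{p}{q_i}\le\tfrac p2$, rewrites as $\tfrac{2}{\gamma_i}\le p\delta_i=\theta_2^{(i)}$, which is the upper endpoint constraint in the definition of $E$. To secure the lower constraint $\theta_1^{(i)}\le\tfrac{2}{\gamma_i}$ I would, when $(i)$ is strict, pass to a smaller exponent $\tilde q_i\le q_i$ making the admissibility critical (so that $\tfrac{2}{\gamma_i}=p\tilde\delta_i$) and then transfer back to $B^{\rho_i}_{q_i,2}(G)$ by the Sobolev-type embedding $(vi)$ together with the monotonicity $(iv)$ of Proposition \ref{properties}; this reduces matters to the diagonal case where $\gamma_i=2/\theta_2^{(i)}$ falls under case $(1)$ or $(3)$ of the definition of $E$. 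The excluded configuration $(\gamma_i,q_i,p)=(2,\infty,2)$ is precisely the forbidden endpoint $\theta_2^{(i)}=1$, $\gamma_i=2$, where the one-dimensional Hardy--Littlewood--Sobolev inequality (Lemma \ref{VHLW}) underlying Proposition \ref{General-Strichartz} degenerates.

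With $\gamma_i\in E(\theta_1^{(i)},\theta_2^{(i)})$ in hand, the homogeneous estimate $\|U_tu_0\|_{L^{\gamma_1}(\mathbb{R},B^{\rho_1}_{q_1,2}(G))}\lesssim\|u_0\|_{L^2(G)}$ is the first conclusion of Proposition \ref{General-Strichartz} applied to the triplet $i=1$. For the inhomogeneous estimate I would not use a single triplet but invoke the $TT^*$ duality of Lemma \ref{DuilatyXX}: taking $A_i^*u_0=U_tu_0$ as a bounded map $L^2(G)\to L^{\gamma_i}([-T,T],B^{\rho_i}_{q_i,2}(G))$ (condition $(2)$ of Lemma \ref{equ}, which is exactly the homogeneous bound just proved), the operator $A_1^*A_2g=\int U_{t-\tau}g(\tau,\cdot)\,d\tau$ satisfies $\|A_1^*A_2g\|_{L^{\gamma_1}(B^{\rho_1}_{q_1,2})}\lesssim a_1a_2\|g\|_{L^{\gamma_2'}(B^{-\rho_2}_{q_2',2})}$, which is the desired mixed-triplet bound. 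The retarded truncation $\int_0^t$ in place of $\int_{\mathbb{R}}$ is absorbed into this framework over the finite interval $[-T,T]$.

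I expect the main obstacle to be the exponent bookkeeping of the second paragraph: matching the inequalities $(i)$, $(ii)$ to the precise boundary cases $(1)$--$(4)$ of $E(\theta_1,\theta_2)$ and aligning the Besov embeddings so that the scaling is respected, together with the careful treatment of the excluded endpoint $(\gamma_i,q_i,p)=(2,\infty,2)$. By contrast, the genuinely analytic input---the dispersive decay with its two distinct rates $\theta_1^{(i)}$ and $\theta_2^{(i)}$---is already supplied by the preceding proposition, so no new oscillatory-integral work is needed here.
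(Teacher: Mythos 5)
Your overall route is exactly the paper's: the paper offers no written proof of this proposition beyond the single line ``By Proposition \ref{General-Strichartz}, we obtain the Strichartz estimates for the fourth-order Schr\"odinger operator'', i.e.\ feed the dispersive estimate of the preceding proposition (with $\theta_1=\max(\tfrac{\rho+N\delta}{2},0)$, $\theta_2=p\delta$) into the abstract machinery, with Lemma \ref{DuilatyXX} supplying the mixed-triplet inhomogeneous bound. Your identification of $\theta_1\leq\theta_2$ with condition $(ii)$, of $\tfrac{2}{\gamma_i}\leq\theta_2^{(i)}$ with condition $(i)$, and of the excluded configuration with the endpoint $\theta_2=1$, $\gamma=2$ are all correct.

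The one step that does not go through as written is your treatment of the lower constraint $\theta_1^{(i)}\leq\tfrac{2}{\gamma_i}$ when $(i)$ is strict. Passing to the critical exponent $\tilde q_i<q_i$ with $\tfrac{2}{\gamma_i}=p\tilde\delta_i$ and critical regularity $\tilde\rho_i=-(N-2p)\tilde\delta_i$, and then embedding $B^{\tilde\rho_i}_{\tilde q_i,2}(G)\subseteq B^{\rho}_{q_i,2}(G)$ via Proposition \ref{properties}$(vi)$, costs exactly $N(\tfrac{1}{\tilde q_i}-\tfrac{1}{q_i})=N(\delta_i-\tilde\delta_i)$ derivatives, so you land at
\begin{equation*}
\rho=\tilde\rho_i-N(\delta_i-\tilde\delta_i)=2p\tilde\delta_i-N\delta_i=\frac{4}{\gamma_i}-N\delta_i,
\end{equation*}
which is \emph{strictly below} the claimed threshold $-(N-2p)\delta_i$ precisely when $(i)$ is strict. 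The alternative of lowering $\rho_i$ so that $\theta_1^{(i)}<\tfrac{2}{\gamma_i}$ (case $(4)$ of $E$) hits the same cap, since $\theta_1^{(i)}\leq\tfrac{2}{\gamma_i}$ is equivalent to $\rho_i\leq\tfrac{4}{\gamma_i}-N\delta_i$. So for strictly sub-admissible pairs your argument (and indeed any argument resting solely on Proposition \ref{General-Strichartz} with this dispersive input) only yields the estimate for $\rho_i\leq\tfrac{4}{\gamma_i}-N\delta_i$, not for the full range $\rho_i\leq-(N-2p)\delta_i$ stated; the missing regularity range cannot be recovered by the embeddings you invoke, and at high frequency (where $\phi(r)\approx r^2$ is homogeneous) a scaling/Knapp consideration suggests it is genuinely out of reach. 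You should either restrict condition $(ii)$ to $\rho_i\leq\tfrac{4}{\gamma_i}-N(\tfrac12-\tfrac1{q_i})$ or explain how the extra regularity is obtained; as it stands this is a gap (one the paper's one-line proof silently shares).
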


\begin{theorem}\label{Fsolution}
Under the same hypotheses as in Proposition \ref{SResultSB}, the solution $u$ of
the fourth-order Schr\"{o}dinger equation \eqref{SSchrEqu} satisfies the following estimate
	\begin{equation*}
	\|u\|_{L^{\gamma_1}\left([-T,T], B^{\rho_1}_{q_1,2}(G)\right)}\lesssim\|u_0\|_{L^2(G)}+\|g\|_{L^{\gamma'_2}([-T,T],B_{q'_2,2}^{-\rho_2}(G))}.
	\end{equation*}
\end{theorem}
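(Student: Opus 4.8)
The plan is to obtain the estimate directly from the two Strichartz inequalities already established in Proposition \ref{SResultSB}, combined with Duhamel's representation of the solution; no new analysis is required beyond an application of the triangle inequality. First I would recall that, by Duhamel's principle, the solution of \eqref{SSchrEqu} is
\[
u(t)=U_tu_0-i\int_0^tU_{t-\tau}g(\tau,\cdot)\,d\tau,\qquad U_t=e^{it(\mathcal{L}^2+\mathcal{L})}.
\]
The strategy is then to split the space-time norm of $u$ into the contribution of the free evolution $U_tu_0$ and the contribution of the retarded (Duhamel) integral, and to control each piece by the corresponding estimate of Proposition \ref{SResultSB}.

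Concretely, applying the triangle inequality in $L^{\gamma_1}\!\left([-T,T],B^{\rho_1}_{q_1,2}(G)\right)$ gives
\[
\|u\|_{L^{\gamma_1}([-T,T],B^{\rho_1}_{q_1,2}(G))}\leq \|U_tu_0\|_{L^{\gamma_1}([-T,T],B^{\rho_1}_{q_1,2}(G))}+\left\|\int_0^tU_{t-\tau}g(\tau,\cdot)\,d\tau\right\|_{L^{\gamma_1}([-T,T],B^{\rho_1}_{q_1,2}(G))}.
\]
For the first term I would invoke the homogeneous estimate of Proposition \ref{SResultSB}: since restricting the time domain only decreases the $L^{\gamma_1}$-norm, we have
\[
\|U_tu_0\|_{L^{\gamma_1}([-T,T],B^{\rho_1}_{q_1,2}(G))}\leq \|U_tu_0\|_{L^{\gamma_1}(\mathbb{R},B^{\rho_1}_{q_1,2}(G))}\lesssim \|u_0\|_{L^2(G)}.
\]
For the second term I would apply the retarded estimate of Proposition \ref{SResultSB} verbatim, which bounds it by $\|g\|_{L^{\gamma'_2}([-T,T],B^{-\rho_2}_{q'_2,2}(G))}$. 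Summing the two contributions yields exactly the asserted inequality.

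Since both ingredients are already in hand from Proposition \ref{SResultSB}, there is no genuinely hard step remaining; the argument is purely structural. The only point I would flag is the passage from the full-line bound $L^{\gamma_1}(\mathbb{R},\cdot)$ to the truncated bound $L^{\gamma_1}([-T,T],\cdot)$ for the free evolution, which follows at once from monotonicity of the $L^{\gamma_1}$-norm under restriction of the integration interval. I would also emphasize that this proof is identical, line for line, to the one used for the fractional Schr\"odinger equation earlier in this section, the sole modification being that Proposition \ref{SResultSB} replaces Proposition \ref{Intermediate}; in particular the admissibility conditions on $(\gamma_i,q_i,\rho_i)$ and the exceptional endpoint $(\gamma_i,q_i,p)=(2,\infty,2)$ are inherited unchanged from that proposition.
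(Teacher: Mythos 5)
Your proposal is correct and is exactly the argument the paper intends: the theorem follows immediately from Duhamel's formula, the triangle inequality, and the two estimates of Proposition \ref{SResultSB} (the paper omits the proof precisely because it is this routine combination, as in the fractional Schr\"odinger case). No issues.
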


\begin{corollary}\label{FLebesgue}Suppose $\gamma\in[2(N-2p)/p,+\infty]$ and $q\geq2$ satisfy
	\begin{equation*}
	\frac{4}{\gamma}+\frac{N}{q}=\frac{N}{2}-1.
	\end{equation*}
If $u$ is a solution of the fourth-order Schr\"{o}dinger equation \eqref{SSchrEqu}, then the following inequality holds true
	\begin{equation*}
	\|u\|_{L^\gamma([-T,T],L^q(G))} \lesssim\|u_0\|_{H^1(G)}+\|g\|_{L^1([-T,T],H^1(G))}.
	\end{equation*}
\end{corollary}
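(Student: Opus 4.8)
The plan is to follow the proof of Corollary \ref{SLebesgue} for the fractional Schr\"odinger equation essentially verbatim, since the fourth-order equation \eqref{SSchrEqu} corresponds to $\phi(r)=r^2+r$, whose high-frequency order in $\mathcal{L}$ is $2$; this plays the role of $\alpha=2$ in the scaling relation $\frac{2\alpha}{\gamma}+\frac{N}{q}=\frac{N}{2}-1$. First I would fix an intermediate Besov integrability exponent $\tilde q$ and a regularity $\rho$ by imposing the two admissibility relations of Proposition \ref{SResultSB} with equality, namely $\frac{2}{\gamma}+\frac{p}{\tilde q}=\frac{p}{2}$ and $\rho=-(N-2p)\left(\frac12-\frac{1}{\tilde q}\right)$, so that the homogeneous and inhomogeneous fourth-order Strichartz estimates of Proposition \ref{SResultSB} apply to the triple $(\gamma,\tilde q,\rho)$.

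The main step is to upgrade these estimates by one derivative, converting the $L^2$ and $L^1L^2$ data norms into $H^1$ and $L^1H^1$. Using property (v) of Proposition \ref{properties} we have $\|v\|_{B^{\rho+1}_{\tilde q,2}(G)}\sim\|\mathcal{L}^{1/2}v\|_{B^{\rho}_{\tilde q,2}(G)}$, and since $\mathcal{L}^{1/2}$ commutes with $U_t=e^{it(\mathcal{L}^2+\mathcal{L})}$ (both being functions of $\mathcal{L}$), applying $\mathcal{L}^{1/2}$ to the Duhamel representation $u=U_tu_0-i\int_0^tU_{t-\tau}g\,d\tau$ and invoking the two estimates of Proposition \ref{SResultSB}, with the second index chosen as $(\gamma_2,q_2,\rho_2)=(\infty,2,0)$ so that $B^0_{2,2}(G)=L^2(G)$, would yield
\[
\|u\|_{L^\gamma([-T,T],B^{\rho+1}_{\tilde q,2}(G))}\lesssim\|\mathcal{L}^{1/2}u_0\|_{L^2(G)}+\|\mathcal{L}^{1/2}g\|_{L^1([-T,T],L^2(G))}\lesssim\|u_0\|_{H^1(G)}+\|g\|_{L^1([-T,T],H^1(G))}.
\]

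Finally I would pass from the Besov space to the Lebesgue space through the embedding $B^{\rho+1}_{\tilde q,2}(G)\subset B^0_{q,2}(G)\subset L^q(G)$ granted by properties (vi) and (vii) of Proposition \ref{properties}, valid precisely when $\rho+1\geq0$ and $\frac1q=\frac{1}{\tilde q}-\frac{\rho+1}{N}$. Substituting the chosen values of $\tilde q$ and $\rho$ into this last relation reproduces exactly the scaling identity $\frac{4}{\gamma}+\frac{N}{q}=\frac{N}{2}-1$, while the requirement $\rho+1\geq0$ is equivalent to $\gamma\geq 2(N-2p)/p$, matching the hypotheses. The one genuinely delicate point, which I would carry out most carefully, is this index bookkeeping: checking that the admissibility conditions (i)--(ii) of Proposition \ref{SResultSB}, the embedding scaling, the condition $\rho+1\geq0$, and the excluded endpoint $(\gamma_i,q_i,p)\neq(2,\infty,2)$ are mutually compatible throughout the stated range of $(\gamma,q)$. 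The dispersive and duality content is already packaged inside Proposition \ref{SResultSB}, so no new oscillatory-integral analysis is required.
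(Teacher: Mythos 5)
Your proposal is correct and follows essentially the same route as the paper, which proves the analogous Corollary for the fractional Schr\"odinger equation (apply the Besov--Strichartz estimates of the relevant proposition with the dual pair $(\gamma_2,q_2,\rho_2)=(\infty,2,0)$, shift regularity by one derivative via $\mathcal{L}^{1/2}$, then use the embedding $B^{\rho+1}_{\tilde q,2}(G)\subset L^q(G)$ with $\rho+1\geq 0$ and $\frac1q=\frac{1}{\tilde q}-\frac{\rho+1}{N}$) and then states that the fourth-order case is identical. Your index bookkeeping checks out, including the observation that the constraint $\gamma\geq 2(N-2p)/p=4d/p>2$ keeps you away from the excluded endpoint $(\gamma,q,p)=(2,\infty,2)$.
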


The proof of Proposition Corollary \ref{FLebesgue} is similar to the case of the fractional Schr\"{o}dinger case. We omit it here.
%------------------------------------------------------------

\noindent\\[2mm]
\subsection{The beam equation}
Moreover, we consider the beam equation
	\begin{equation}\label{BeamEqu}
	\begin{cases}
	\partial_t^2u+\mathcal{L}^2 u+u=g,\\
	u|_{t=0}=u_0 ,  \\
	\partial_tu|_{t=0}=u_1.
	\end{cases}
	\end{equation}
By Duhamel's principle, the solution is formally given by
\begin{equation*}\label{solution3}
	u(t)=\frac{d\mathcal{B}_t}{dt}u_0+\mathcal{B}_tu_1-\int_0^t\mathcal{B}_{t-\tau}g(\tau,\cdot)\,d\tau,
	\end{equation*}
where
	\begin{equation*}
	\mathcal{B}_t=\frac{sin(t\sqrt{I+\mathcal{L}^2})}{\sqrt{I+\mathcal{L}^2}},
	\quad  \frac{d\mathcal{B}_t}{dt}=cos(t\sqrt{I+\mathcal{L}^2}).
	\end{equation*}
So we naturally introduce the operator $B_t=e^{it\sqrt{I+\mathcal{L}^2}}$, which corresponds to the case when $\phi(r)=\sqrt{1+r^2}$.
By a simple calculation,
	\begin{equation*}
	\phi'(r)=r(1+r^2)^{-\frac{1}{2}}, \quad
	\phi''(r)=(1+r^2)^{-\frac{3}{2}}.
	\end{equation*}
We know $\phi$ satisfies (H1)–(H4) with $m_1=1,\alpha_1=-1, m_2=\alpha_2=2$.

The beam equation is also dispersive.
\begin{theorem}
    Let $u$ be the solution of the free beam equation \eqref{BeamEqu} (i.e. g=0). We have the sharp estimates
    \begin{equation*}
        \|u(t)\|_{L^\infty(G)}\lesssim |t|^{-\frac{p}{2}}\left(\|u_0\|_{B^{N-p+2}_{1,1}(G)}+\|u_1\|_{B^{N-p+1}_{1,1}(G)}\right).
    \end{equation*}
\end{theorem}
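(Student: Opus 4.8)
The plan is to mirror the argument already given for the fractional Schr\"odinger equation, now splitting the free solution $u(t)=\cos(t\sqrt{I+\mathcal{L}^2})u_0+\mathcal{B}_tu_1$ into its two data contributions and reducing each to the half-propagator $U_{\pm t}=e^{\pm it\sqrt{I+\mathcal{L}^2}}$. Since $\cos(t\sqrt{I+\mathcal{L}^2})=\tfrac12(U_t+U_{-t})$ and $\mathcal{B}_t=\tfrac1{2i}(I+\mathcal{L}^2)^{-1/2}(U_t-U_{-t})$, and since the decay bounds of Theorem \ref{ResultTime} depend only on $|t|$ and hence apply verbatim to $U_{-t}$, the whole estimate rests on the kernel bounds for $U_t\varphi_j$ and $U_t\psi$ with $\phi(r)=\sqrt{1+r^2}$, for which $m_1=1$, $\alpha_1=-1$, $m_2=\alpha_2=2$.

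First I would treat the $u_0$ term, where the $\cos$ symbol is bounded. Decomposing $u_0=P_{\le0}u_0+\sum_{j\ge1}\Delta_ju_0$ and using Young's inequality $\|U_{\pm t}\varphi_j*f\|_{L^\infty}\le\|U_{\pm t}\varphi_j\|_{L^\infty}\|f\|_{L^1}$ (together with $\varphi_j=\varphi_j*\widetilde\varphi_j$ to recover the block norm $\|\Delta_ju_0\|_{L^1}$), the high-frequency blocks are controlled by \eqref{res3-3} with $\theta=1$: inserting $m_1=1$, $\alpha_1=-1$ gives $\|U_t\varphi_j\|_{L^\infty}\lesssim|t|^{-p/2}2^{j(N-p+2)}$, whose weight matches $\|u_0\|_{B^{N-p+2}_{1,1}}$. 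The low-frequency part is handled by \eqref{res-sum2}, valid since (H4) holds with $\alpha_2=m_2=2$; here $\theta=\min(N/(2m_2),p/2)=\min(N/4,p/2)=p/2$ because $N=2d+2p\ge2p$. Summing both regimes yields $\|\cos(t\sqrt{I+\mathcal{L}^2})u_0\|_{L^\infty}\lesssim|t|^{-p/2}\|u_0\|_{B^{N-p+2}_{1,1}}$.

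The genuinely new ingredient, and the step I expect to demand the most care, is the $u_1$ term, where the smoothing factor $(I+\mathcal{L}^2)^{-1/2}$ appears. On the block $\Delta_j$ one has $(2m+d)|\lambda|\sim2^{2j}$ on the support of $R(2^{-2j}\cdot)$, so $(1+((2m+d)|\lambda|)^2)^{-1/2}=2^{-2j}a_j$ where, after the rescaling used in \eqref{Imj}, $a_j$ is a smooth bump supported in $[1/2,4]$ with derivatives bounded uniformly in $j$. I would therefore fold $(I+\mathcal{L}^2)^{-1/2}$ into the cutoff $R$ and rerun the oscillatory kernel estimate of part (a): the bounds \eqref{small-s} and \eqref{big-s} only use that the cutoff is a uniformly controlled bump on $[1/2,4]$, so the same argument produces the extra scalar gain $2^{-2j}$, improving the weight to $2^{j(N-p)}$. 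Since $B^{N-p+1}_{1,1}(G)\subseteq B^{N-p}_{1,1}(G)$ by Proposition \ref{properties}(iv), this is dominated by $\|u_1\|_{B^{N-p+1}_{1,1}}$; on the low-frequency part $(I+\mathcal{L}^2)^{-1/2}\Psi(\mathcal{L})$ is a smooth compactly supported function of $\mathcal{L}$, so \eqref{res-sum2} again gives the $(1+|t|)^{-p/2}\le|t|^{-p/2}$ decay. The delicate point is exactly this peeling-off of the multiplier block-by-block without spoiling the stationary-phase estimates, while keeping all constants uniform in $j$.

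Finally, sharpness would follow the explicit example used before: taking $\widehat{u_0}(m,\lambda)=Q(|\lambda|)\delta_{m0}$ with $Q$ supported near $|\lambda|=1$ and $u_1=0$, the free solution evaluated at $(0,ts_0,t)$ reduces, on the level $m=0$, to an oscillatory integral with phase $\Psi(\lambda)=-\lambda\cdot s_0\pm\sqrt{1+d^2|\lambda|^2}$. Choosing $s_0=\frac{d^2}{\sqrt{1+d^2}}(0,\dots,0,1)$ places a unique, nondegenerate critical point at $|\lambda|=1$ inside the support of $Q$, so Lemma \ref{asymptotic} gives $u(0,ts_0,t)\sim|t|^{-p/2}$, showing that the exponent $p/2$ cannot be improved.
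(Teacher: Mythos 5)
Your proposal is correct and follows essentially the same route as the paper, which itself only writes out the sharpness example (with phase $-\lambda\cdot s_0\pm\sqrt{1+d^2|\lambda|^2}$ and a nondegenerate critical point) and defers the inequality to "the same argument as for the fractional Schr\"odinger equation," i.e.\ exactly the block-by-block application of \eqref{res3-3} with $\theta=1$, $m_1=1$, $\alpha_1=-1$ and of \eqref{res-sum2} with $\theta=p/2$ that you carry out. Your additional care with the smoothing factor $(I+\mathcal{L}^2)^{-1/2}$ (absorbing the rescaled symbol $2^{-2j}(2^{-4j}+|\lambda'|^2)^{-1/2}$ into the cutoff, which indeed stays a uniformly controlled bump on $[1/2,4]$) and your slightly different normalization of $s_0$ placing the critical point at $|\lambda|=1$ are harmless refinements of what the paper leaves implicit.
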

\begin{proof}
Since the proof of the inequality is very similar to the fractional Schr\"odinger equation, we only show the sharpness of the decay. For the same $Q$, $D_0$ and $u_0$, and if $u$ is the solution of \eqref{BeamEqu} when $g=0$ and $u_1=0$, we have
	\begin{equation*}	
u(z,s,t)=\cos(t\sqrt{I+\mathcal{L}^2})u_0=C_{d,p}\int_{\mathbb{R}^p}e^{-i\lambda\cdot s-\frac{|\lambda||z|^2}{4}}\cos(t\sqrt{1+d^2|\lambda|^2}) Q(|\lambda|)|\lambda|^d \,d\lambda.
	\end{equation*}
Consider $u(0,ts_0,t)$ for some fixed $s_0=\frac{d}{\sqrt{2}}(0,0,\cdots,0,1)$ such that $|s_0|=\frac{d}{\sqrt{2}}$. This oscillatory integral has a phase
	 $\Psi_\pm(\lambda)=-\lambda\cdot s_0\pm\sqrt{1+d^2|\lambda|^2}$ with a unique critical point $\lambda^\pm_0=\pm\frac{\sqrt{2}s_0}{d^2}=\pm\frac{1}{d}(0,0,\cdots,0,1)$, which is not degenerate. Indeed, the Hessian is equal to
  \begin{equation*}
     H(\lambda^\pm_0)= \pm \frac{d^2}{\sqrt{1+d^2|\lambda|^2}}\left(\delta_{ij}-\frac{d^2\lambda_i\lambda_j}{1+d^2|\lambda|^2}\right)_{ij}\Big|_{\lambda=\lambda^\pm_0}=\pm \frac{d^2}{\sqrt{2}}
      \left\{
\begin{array}{llll}
1 & & & \\
& \ddots& &\\
& & 1 & \\
& & & \frac{1}{2} \\
\end{array}
\right\}.
  \end{equation*} Applying Lemma \ref{asymptotic}, we have
	\begin{equation*}
	u(0,ts_0,t) \sim |t|^{-p/2}.
	\end{equation*}
\end{proof}
\begin{proposition}
Assume $2\leq q\leq\infty$, $1\leq r\leq\infty$, $\delta=\frac{1}{2}-\frac{1}{q}$.\\

(1)~Let $0\leq\theta\leq 1$ and $\rho=-\left(n+1+\theta\right)\delta$, then we have
\begin{equation*}
    \|B_tf\|_{B_{q,r}^\rho(G)}\lesssim |t|^{-(p-1+\theta)\delta}\|f\|_{B_{q',r}^{-\rho}(G)}.
\end{equation*}

(2)~Let $0\leq\theta\leq p-1$ and $\rho=-\left(n+1+\theta\right)\delta$, then we have
\begin{equation*}
    \|B_tf\|_{B_{q,r}^\rho(G)}\lesssim |t|^{-(p-1-\theta)\delta}\|f\|_{B_{q',r}^{-\rho}(G)}.
\end{equation*}

(3)~In particular, for $0\leq\theta\leq 1$ and $\rho\leq -\left(n+1+\theta\right)\delta$, then we have
\begin{equation*}
    \|B_tf\|_{B_{q,r}^\rho(G)}\lesssim k(t)\|f\|_{B_{q',r}^{-\rho}(G)}, \quad k(t)=\begin{cases}
	|t|^{-\max (\rho+(n+p)\delta,0)} , \, &|t|\leq1,\\
	|t|^{-(p-1+\theta)\delta}, \,&|t|\geq1.
	\end{cases}
\end{equation*}
\end{proposition}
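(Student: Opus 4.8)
The plan is to follow the scheme already used for the fractional and fourth-order Schr\"odinger operators: interpolate the frequency-localized decay estimates of Theorem \ref{ResultTime} against the trivial $L^2\to L^2$ bound given by the unitarity of $B_t$ (Plancherel), invoke the Riesz--Thorin theorem to pass from an $L^1\to L^\infty$ to an $L^{q'}\to L^q$ inequality, and then sum the Littlewood--Paley pieces in the Besov norm. Throughout one uses $N=n+p$ together with the values $m_1=1$, $\alpha_1=-1$, $m_2=\alpha_2=2$ already recorded for $\phi(r)=\sqrt{1+r^2}$.

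For the high frequencies $j\geq1$ I would first extract the two interpolated building blocks. Inserting $m_1=1$ and $\alpha_1=-1$ into \eqref{res3-3} gives the exponent $N-m_1(p-1+\theta)-\theta(\alpha_1-m_1)=n+1+\theta$, and interpolating with $\|B_t\Delta_j\|_{L^2\to L^2}=1$ yields $\|B_t\Delta_j f\|_{L^q(G)}\lesssim|t|^{-(p-1+\theta)\delta}2^{2j(n+1+\theta)\delta}\|\Delta_j f\|_{L^{q'}(G)}$ for $0\leq\theta\leq1$. With $\rho=-(n+1+\theta)\delta$ the frequency weight becomes $2^{-2j\rho}$, so that multiplying by $2^{j\rho}$ and taking the $\ell^r$ sum proves part (1). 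Likewise \eqref{res3-2} with $m_1=1$ gives $\|B_t\Delta_j f\|_{L^q(G)}\lesssim|t|^{-2\vartheta\delta}2^{2j(N-2\vartheta)\delta}\|\Delta_j f\|_{L^{q'}(G)}$ for $0\leq\vartheta\leq\tfrac{p-1}{2}$; the substitution $\vartheta=\tfrac{p-1-\theta}{2}$, legitimate exactly when $0\leq\theta\leq p-1$, makes $N-2\vartheta=n+1+\theta$ and the time exponent equal to $(p-1-\theta)\delta$, which is part (2).

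For the low frequencies I would invoke Theorem \ref{ResultTime}(c): since $\alpha_2=m_2=2$, estimate \eqref{res-sum2} applies with $\min(\tfrac{N}{2m_2},\tfrac{p}{2})=\tfrac{p}{2}$ (using $N\geq 2p$), giving $\|B_t\psi\|_{L^\infty(G)}\lesssim(1+|t|)^{-p/2}$. Writing $P_{\leq0}B_t f=(B_t\psi)*f$, Young's inequality supplies the $L^1\to L^\infty$ bound $\|B_t\psi\|_{L^\infty(G)}$, and interpolation with Plancherel gives $\|B_t P_{\leq0}f\|_{L^q(G)}\lesssim(1+|t|)^{-p\delta}\|P_{\leq0}f\|_{L^{q'}(G)}$. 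Since $p\geq p-1\pm\theta$, this low-frequency rate is always at least as fast as the high-frequency rates of parts (1)--(2), so its contribution is harmlessly absorbed there. For part (3) I then split $k(t)$ at $|t|=1$. When $|t|\geq1$ I run part (1) under the relaxed hypothesis $\rho\leq-(n+1+\theta)\delta$: the surviving frequency factor $2^{j(\rho+2(n+1+\theta)\delta)}$ is controlled by $2^{-j\rho}$ precisely because $\rho\leq-(n+1+\theta)\delta$, so the high-frequency sum closes with decay $|t|^{-(p-1+\theta)\delta}$, while the faster low-frequency rate $|t|^{-p\delta}$ is again absorbed. When $|t|\leq1$ I optimize \eqref{res3-2}: taking $\vartheta=\tfrac{\rho+N\delta}{2\delta}$, which lies in $[0,\tfrac{p-1}{2}]$ exactly when $-N\delta\leq\rho\leq-(n+1)\delta$, forces the frequency exponent to $-2\rho$ and the time exponent to $\rho+N\delta$; for $\rho<-N\delta$ one instead takes $\vartheta=0$, and together this produces the exponent $\max(\rho+N\delta,0)$, while the low-frequency piece contributes only the constant bound $1$.

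The main obstacle is the bookkeeping forced by $\alpha_1=-1\neq m_1=1$: the (H1)-estimate \eqref{res3-2} and the (H3)-estimate \eqref{res3-3} yield genuinely different pairs of time and frequency exponents, and one must keep track of which of the two governs each regime --- \eqref{res3-3} for part (1) and the large-time tail of part (3), and \eqref{res3-2} for part (2) and for the optimization over $\vartheta$ in the small-time regime of part (3). Confirming that the admissible ranges of $\theta$ and $\vartheta$ match the stated hypotheses, and that the dyadic sums converge in $\ell^r$ under the sharp sign condition $\rho\leq-(n+1+\theta)\delta$, is the only genuinely delicate point; once these are in place the interpolation and the summation are routine.
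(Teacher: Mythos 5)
Your proposal is correct and follows essentially the same route as the paper: parts (1) and (2) come from \eqref{res3-3} and \eqref{res3-2} respectively with $m_1=1$, $\alpha_1=-1$, interpolated against the $L^2$ bound via Riesz--Thorin, with the low-frequency piece absorbed by the faster decay from Theorem \ref{ResultTime}(c). For part (3) the paper reduces the $|t|\le 1$ regime to part (2) with a suitably chosen $\theta'$ plus the Besov inclusion of Proposition \ref{properties}, which is the same computation as your direct optimization over $\vartheta$ in \eqref{res3-2}.
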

\begin{proof}
First we prove (1). It yields from \eqref{res-sum2} of Theorem \ref{ResultTime}, Plancherel's identity and Riesz-Thorin interpolation theorem that for $0\leq \theta\leq 1$
\begin{equation*}
    \|B_tP_{\leq 0}f\|_{L^q (G)}\lesssim |t|^{-(p-1+\theta)\delta}\|P_{\leq 0}f\|_{L^{q'}(G)}.
\end{equation*}
It follows from \eqref{res3-3} of Theorem \ref{ResultTime}, Plancherel's identity and Riesz-Thorin interpolation theorem that for $j\geq 1$
\begin{equation*}
   \|B_t\Delta_jf\|_{L^q (G)}\lesssim |t|^{-(p-1+\theta)\delta}2^{2j\left(n+1+\theta\right)\delta}\|\Delta_jf\|_{L^{q'}(G)}.
\end{equation*}
Hence, by $\rho=-\left(n+1+\theta\right)\delta$, we complete the proof of (1).

Next, we prove (2). From  \eqref{res3-2} and \eqref{res-sum1} of Theorem \ref{ResultTime}, for $0\leq \theta\leq p-1$, we get
\begin{align*}
    \|B_tP_{\leq 0}f\|_{L^\infty(G)}&\lesssim |t|^{-\frac{p-1-\theta}{2}}\|P_{\leq 0}f\|_{L^1(G)},\\
    \|B_t\Delta_jf\|_{L^\infty (G)}&\lesssim |t|^{-\frac{p-1-\theta}{2}}2^{j\left(n+1+\theta\right)}\|\Delta_jf\|_{L^1(G)}.
\end{align*}
Argued similarly as the proof of (1), we can easily obtain the results of (2).
\vskip 0.5cm

Finally, we prove (3). We divide the proof into three cases.\\

\textbf{Case 1.} $|t|\geq 1$.\\

In this case, it follows immediately from the results in (1) and the inclusion relation between Besov spaces in Proposition \ref{properties}.\\

\textbf{Case 2.} $|t|\leq 1$ and $\rho+(n+p)\delta>0$.\\

In this case,  it follows immediately from the results in (2) and the inclusion relation between Besov spaces in Proposition \ref{properties}.

%Indeed, if $\rho=-(n+1+\theta)$, then $0<\rho+(n+p)\delta=(p-1-\theta)\delta$ which indicates $p\geq2$. From (2), we have \begin{equation*} \|B_tf\|_{B_{q,r}^\rho(G)}\lesssim |t|^{-(p-1-\theta)\delta}\|f\|_{B_{q',r}^{-\rho}(G)}=|t|^{-(\rho+(n+p)\delta)}\|f\|_{B_{q',r}^{-\rho}(G)}.\end{equation*} By the inclusion relation between Besov spaces in Proposition \ref{properties} we prove (3).

\textbf{Case 3.} $|t|\leq 1$ and $\rho+(n+p)\delta\leq 0$.\\
In this case, taking $\theta=p-1$ in (2), for $\rho_1=-(n+p)\delta\geq \rho$, we have
\begin{equation*}
    \|B_tf\|_{B_{q,r}^{\rho_1}(G)}\lesssim \|f\|_{B_{q',r}^{-\rho_1}(G)},
\end{equation*}
which immediately induces our desired result by using the inclusion relation between Besov spaces in Proposition \ref{properties} again.
\end{proof}

By Proposition \ref{General-Strichartz}, we obtain the Strichartz estimates for the beam operator.
\begin{proposition} For $i=1,2$, let $\gamma_i, q_i\in[2,\infty]$ and $\rho_i\in\mathbb{R}$ such that
	\begin{equation*}
	\begin{aligned}
	&(i)\,\max\left((p-2)\left(\frac{1}{2}-\frac{1}{q_i}\right),0\right)<\frac{2}{\gamma_i}< p\left(\frac{1}{2}-\frac{1}{q_i}\right) ,   \\
	&(ii)\,\rho_i\leq -(n+2)\left(\frac{1}{2}-\frac{1}{q_i}\right).
	\end{aligned}
	\end{equation*}
Then for any $T>0$, the beam operator $B_t$ satisfies the estimates
	\begin{equation*}
	\|B_tu_0\|_{L^{\gamma_1}(\mathbb{R}, B_{q_1,2}^{\rho_1}(G))}\lesssim \|u_0\|_{L^2(G)},
	\end{equation*}
	\begin{equation*}
	\left\|\int_0^tB_{t-\tau}g(\tau,\cdot)\,d\tau\right\|_{L^{\gamma_1}\left([-T,T], B^{\rho_1}_{q_1,2}(G)\right)}\lesssim \|g\|_{L^{\gamma'_2}([-T,T],B_{q'_2,2}^{-\rho_2}(G))}.
	\end{equation*}
\end{proposition}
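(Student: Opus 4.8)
The plan is to deduce this proposition from the general Strichartz machinery of Proposition \ref{General-Strichartz}, once the dispersive estimate \eqref{General-Assumption} has been put in the correct form for the beam operator $B_t$. Throughout write $\delta_i=\frac12-\frac1{q_i}$; note that the strict inequalities in (i) force $\delta_i>0$ (hence $q_i>2$) and $\frac{2}{\gamma_i}>0$ (hence $\gamma_i<\infty$), so the degenerate indices are automatically excluded.

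First I would specialize the beam dispersive proposition just established, in its form (3), by taking $\theta=1$. At the boundary value $\rho_i=-(n+2)\delta_i$ this produces \eqref{General-Assumption} for $U_t=B_t$ with small-time decay exponent $\theta_1=\max\!\big(\rho_i+(n+p)\delta_i,\,0\big)=\max\!\big((p-2)\delta_i,0\big)$ and large-time decay exponent $\theta_2=(p-1+\theta)\delta_i=p\delta_i$, since with $\theta=1$ one has $\rho_i+(n+p)\delta_i=(p-2)\delta_i$ and $(p-1+1)\delta_i=p\delta_i$. Because $\delta_i>0$ gives both $(p-2)\delta_i<p\delta_i$ and $0<p\delta_i$, we get $\theta_1=\max\!\big((p-2)\delta_i,0\big)<p\delta_i=\theta_2$, so $\theta_1<\theta_2$ strictly.

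Second I would observe that hypothesis (i), namely $\max\!\big((p-2)\delta_i,0\big)<\frac{2}{\gamma_i}<p\delta_i$, is \emph{exactly} the condition $\theta_1<\frac{2}{\gamma_i}<\theta_2$ with $2\le\gamma_i\le\infty$, i.e. case (4) of the definition of $E(\theta_1,\theta_2)$; hence $\gamma_i\in E(\theta_1,\theta_2)$. Thus each triple $(\gamma_i,q_i,\rho_i)$ with $\rho_i=-(n+2)\delta_i$ satisfies all the hypotheses of Proposition \ref{General-Strichartz}. Applying that proposition with $i=1$ yields the homogeneous estimate, and the mixed inhomogeneous estimate follows from the bilinear $TT^*$ duality of Lemma \ref{DuilatyXX}, combined with Lemmas \ref{equ} and \ref{L1LW}, applied to the two triples simultaneously, exactly as for the fractional Schr\"odinger operator in Proposition \ref{Intermediate}.

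Finally, for a general $\rho_i\le-(n+2)\delta_i$ I would descend from the boundary value $\bar\rho_i=-(n+2)\delta_i$ by the continuous inclusion of Proposition \ref{properties}(iv): on the left-hand side $B^{\bar\rho_1}_{q_1,2}(G)\subseteq B^{\rho_1}_{q_1,2}(G)$ bounds the $\rho_1$-norm by the $\bar\rho_1$-norm, while on the right-hand side $B^{-\rho_2}_{q_2',2}(G)\subseteq B^{-\bar\rho_2}_{q_2',2}(G)$ lets us dominate the $\bar\rho_2$-norm of $g$ by its $\rho_2$-norm, both inclusions running in the favorable direction. The abstract duality and the embeddings are routine; the one point requiring care is the exponent bookkeeping in the first two steps, i.e. choosing $\theta=1$ and $\rho_i=-(n+2)\delta_i$ so that the two decay rates coming out of the beam dispersive estimate line up precisely with the two endpoints of the admissible window in (i). The main (conceptual rather than computational) obstacle is thus recognizing that the mismatch between the low- and high-frequency homogeneity of $\phi(r)=\sqrt{1+r^2}$, reflected in $\theta_1\neq\theta_2$, places $\gamma_i$ in case (4) of $E(\theta_1,\theta_2)$ rather than the degenerate cases (1)--(3), which is exactly why the two-sided strict inequality in (i) is the natural admissibility condition here.
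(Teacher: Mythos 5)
Your proposal is correct and follows essentially the same route as the paper, which derives this proposition directly from Proposition \ref{General-Strichartz} applied to the beam dispersive estimate (part (3) with $\theta=1$ at the borderline regularity $\rho_i=-(n+2)\delta_i$, where $\theta_1=\max((p-2)\delta_i,0)$ and $\theta_2=p\delta_i$ place $\gamma_i$ in case (4) of $E(\theta_1,\theta_2)$), together with the duality Lemmas \ref{equ}--\ref{L1LW} for the mixed-index inhomogeneous bound and the Besov inclusions of Proposition \ref{properties} for general $\rho_i$. In fact you supply more of the exponent bookkeeping than the paper, which states the result without further argument.
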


%---------------------------------------------------------------
\noindent\\[2mm]
\subsection{The Klein-Gordon equation}
Finally, we consider the Klein-Gordon equation
	\begin{equation}\label{K-GEqu}
	\begin{cases}
	\partial_t^2u+\mathcal{L} u+u=g,\\
	u|_{t=0}=u_0 ,  \\
	\partial_tu|_{t=0}=u_1.
	\end{cases}
	\end{equation}
By Duhamel's principle, the solution is formally given by
\begin{equation*}\label{solution3}
	u(t)=\frac{dA_t}{dt}u_0+A_tu_1-\int_0^tA_{t-\tau}g(\tau,\cdot)\,d\tau,
	\end{equation*}
where
	\begin{equation*}
	A_t=\frac{sin(t\sqrt{I+\mathcal{L}})}{\sqrt{I+\mathcal{L}}},
	\quad  \frac{dA_t}{dt}=cos(t\sqrt{I+\mathcal{L}}).
	\end{equation*}
So we naturally introduce the operator $K_t=e^{it\sqrt{I+\mathcal{L}}}$, which corresponds to the case when $\phi(r)=\sqrt{1+r}$.
By a simple calculation,
	\begin{equation*}
	\phi'(r)=(1+r)^{-\frac{1}{2}}, \quad
	\phi''(r)=-\frac{1}{2}(1+r)^{-\frac{3}{2}}.
	\end{equation*}
We know $\phi$ satisfies (H1)–(H4) with $m_1=\alpha_1=\frac{1}{2}, m_2=1, \alpha_2=2$.
The following theorem indicates that Klein-Gordon equation is dispersive.
\begin{theorem}
    Let $u$ be the solution of the free Klein-Gordon equation \eqref{K-GEqu} (i.e. g=0). We have the sharp estimate
    \begin{equation*}
        \|u(t)\|_{L^\infty(G)}\lesssim |t|^{-\frac{p}{2}}\left(\|u_0\|_{B^{N-\frac{p}{2}}_{1,1}(G)}+\|u_1\|_{B^{N-\frac{p}{2}-1}_{1,1}(G)}\right).
    \end{equation*}
\end{theorem}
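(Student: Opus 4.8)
The plan is to follow the two-step template already used for the fractional and fourth-order Schr\"odinger and beam equations: first derive the dispersive bound from Theorem \ref{ResultTime}, then produce a test datum that attains the rate $|t|^{-p/2}$. With $g=0$ the solution is $u(t)=\cos(t\sqrt{I+\mathcal{L}})u_0+A_tu_1$, so I would first rewrite both propagators in terms of $K_t=e^{it\sqrt{I+\mathcal{L}}}$ via $\cos(t\sqrt{I+\mathcal{L}})=\tfrac12(K_t+K_{-t})$ and $A_t=\tfrac1{2i}(K_t-K_{-t})(I+\mathcal{L})^{-1/2}$, reducing everything to $L^\infty$ bounds on $K_t$ applied to Littlewood--Paley blocks.

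For the $u_0$ contribution I would split $u_0=P_{\leq0}u_0+\sum_{j\geq1}\Delta_ju_0$. On a dyadic block I use $K_t\Delta_ju_0=(\Delta_ju_0)*(K_t\widetilde{\varphi_j})$ together with \eqref{res3-3} under (H3) with $\theta=1$; since here $m_1=\alpha_1=\tfrac12$ the weight collapses to $\|K_t\widetilde{\varphi_j}\|_{L^\infty}\lesssim|t|^{-p/2}2^{j(N-p/2)}$, hence $\|K_t\Delta_ju_0\|_{L^\infty}\lesssim|t|^{-p/2}2^{j(N-p/2)}\|\Delta_ju_0\|_{L^1}$. For the low frequencies I would check that (H4) holds with $\alpha_2=2>m_2=1$ and that $p<\frac{N-(\alpha_2-m_2)}{m_2}=N-1$, which is automatic as $N=2d+2p$; then \eqref{res-sum2-n} yields $\|K_t\psi\|_{L^\infty}\lesssim(1+|t|)^{-p/2}$, and after a harmless refactoring $P_{\leq0}=P_{\leq0}\widetilde{P}_{\leq0}$ to put the $L^1$ norm on $P_{\leq0}u_0$ one gets $\|K_tP_{\leq0}u_0\|_{L^\infty}\lesssim|t|^{-p/2}\|P_{\leq0}u_0\|_{L^1}$. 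Summing in $j$ reconstitutes $\|u_0\|_{B^{N-p/2}_{1,1}(G)}$. The $u_1$ contribution is treated identically after applying $(I+\mathcal{L})^{-1/2}$ first: on $\Delta_ju_1$ this multiplier behaves like $\mathcal{L}^{-1/2}$, so \eqref{LPLP} with $\sigma=-1$ supplies an extra factor $2^{-j}$ that turns the weight into $2^{j(N-p/2-1)}$ and hence yields $\|u_1\|_{B^{N-p/2-1}_{1,1}(G)}$, while on the low frequencies $(I+\mathcal{L})^{-1/2}$ is bounded. Adding the two contributions gives the stated estimate.

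For sharpness I would take $u_1=0$ and the same datum $u_0$ with $\widehat{u_0}(m,\lambda)=Q(|\lambda|)\delta_{m0}$, so only the $m=0$ term survives and $\mathcal{L}$ acts as $d|\lambda|$. Evaluating $\cos(t\sqrt{I+\mathcal{L}})u_0$ at $(0,ts_0,t)$ with $s_0=c(0,\dots,0,1)$ gives, through \eqref{plancherel}, an oscillatory integral over $\mathbb{R}^p$ with the two phases $\Psi_\pm(\lambda)=-\lambda\cdot s_0\pm\sqrt{1+d|\lambda|}$ and amplitude $Q(|\lambda|)|\lambda|^d$. Choosing $c$ so that the unique critical point $\lambda_0=\pm\rho_0(0,\dots,0,1)$ lies in the support of $Q$, I would compute the Hessian of $\Psi_+$ at $\lambda_0$: writing $G(\rho)=\sqrt{1+d\rho}$ it is diagonal, with $p-1$ transverse entries $G'(\rho_0)/\rho_0=\tfrac{d}{2\rho_0\sqrt{1+d\rho_0}}>0$ and one radial entry $G''(\rho_0)=-\tfrac{d^2}{4(1+d\rho_0)^{3/2}}\neq0$, so it is nondegenerate. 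Lemma \ref{asymptotic} then gives $u(0,ts_0,t)\sim|t|^{-p/2}$, matching the upper bound.

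The main difficulty is bookkeeping rather than conceptual: I must verify the hypothesis $p<N-1$ of \eqref{res-sum2-n} and track the single-derivative shift from $(I+\mathcal{L})^{-1/2}$ through \eqref{LPLP} so that the two Besov indices $N-\tfrac p2$ and $N-\tfrac p2-1$ emerge exactly; and for the sharpness I must ensure that the amplitude $Q(|\lambda_0|)|\lambda_0|^d$ does not vanish at the critical point, which constrains the admissible $s_0$, and that the off-diagonal Hessian entries vanish at $\lambda_0$ so that $\det H(\lambda_0)\neq0$ and the leading coefficient in Lemma \ref{asymptotic} is nonzero.
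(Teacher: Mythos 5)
Your proposal is correct and follows essentially the same route as the paper: the upper bound is the standard Littlewood--Paley argument via Theorem \ref{ResultTime} (with \eqref{res3-3} at $\theta=1$, $m_1=\alpha_1=\tfrac12$, and \eqref{res-sum2-n} for the low frequencies, exactly as in the fractional Schr\"odinger case to which the paper defers), and the sharpness is shown by the same stationary-phase computation on the phase $\Psi_\pm(\lambda)=-\lambda\cdot s_0\pm\sqrt{1+d|\lambda|}$ with the same nondegenerate Hessian. If anything you are slightly more careful than the paper in insisting that the critical point be placed inside $\mathrm{supp}\,Q$ so the amplitude does not vanish, and in tracking the $2^{-j}$ shift from $(I+\mathcal{L})^{-1/2}$ that produces the index $N-\tfrac p2-1$ on $u_1$.
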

\begin{proof}
Since the proof of the inequality is very similar to the fractional Schr\"odinger equation, we only show the sharpness of the decay. For the same $Q$, $D_0$ and $u_0$, and if $u$ is the solution of \eqref{BeamEqu} when $g=0$ and $u_1=0$, we have
	\begin{equation*}	
u(z,s,t)=\cos(t\sqrt{I+\mathcal{L}})u_0=C_{d,p}\int_{\mathbb{R}^p}e^{-i\lambda\cdot s-\frac{|\lambda||z|^2}{4}}\cos(t\sqrt{1+d|\lambda|}) Q(|\lambda|)|\lambda|^d \,d\lambda.
	\end{equation*}
Consider $u(0,ts_0,t)$ for some fixed $s_0=\frac{1}{2\sqrt{2}d}(0,\cdots,0,1)$ such that $|s_0|=\frac{1}{2\sqrt{2}d}$. This oscillatory integral has a phase
	 $\Psi_\pm(\lambda)=-\lambda\cdot s_0\pm\sqrt{1+d|\lambda|}$ with a unique critical point $\lambda^\pm_0=\pm\frac{2\sqrt{2}s_0}{d^2}=\pm\frac{1}{d}(0,0,\cdots,0,1)$, which is not degenerate. Indeed, the Hessian is equal to
  \begin{equation*}
     H(\lambda^\pm_0)= \pm \frac{d}{2|\lambda|\sqrt{1+d|\lambda|}}\left(\delta_{ij}-\frac{2+3d|\lambda|}{2|\lambda|^2(1+d|\lambda|)}\lambda_i\lambda_j\right)_{ij}\Big|_{\lambda=\lambda^\pm_0}=\pm \frac{d^2}{2\sqrt{2}}
      \left\{
\begin{array}{llll}
1 & & & \\
& \ddots& &\\
& & 1 & \\
& & & -\frac{1}{4} \\
\end{array}
\right\}.
  \end{equation*} Applying Lemma \ref{asymptotic}, we have
	\begin{equation*}
	u(0,ts_0,t) \sim |t|^{-p/2}.
	\end{equation*}
\end{proof}

\begin{proposition}
Assume $2\leq q\leq\infty$, $1\leq r\leq\infty$, $\delta=\frac{1}{2}-\frac{1}{q}$ and $\rho\leq -\left(N-\frac{p}{2}\right)\delta$, then we have
\begin{equation*}
    \|K_tf\|_{B_{q,r}^\rho(G)}\lesssim k(t)\|f\|_{B_{q',r}^{-\rho}(G)}, \quad k(t)=\begin{cases}
	|t|^{-\max (2(\rho+N\delta),0)} , \, &|t|\leq1,\\
	|t|^{-p\delta}, \,&|t|\geq1.
	\end{cases}
\end{equation*}
\end{proposition}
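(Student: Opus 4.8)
The plan is to treat the two time regimes $|t|\ge 1$ and $|t|\le 1$ separately, exactly as in the fractional and fourth-order Schr\"odinger cases, and in each regime to combine the $L^1\to L^\infty$ kernel bounds of Theorem \ref{ResultTime} with the trivial $L^2\to L^2$ bound coming from the unitarity of $K_t=e^{it\sqrt{I+\mathcal{L}}}$, interpolating by Riesz--Thorin to pass to $L^{q'}\to L^q$ and then summing the Littlewood--Paley pieces to recover the Besov norms. Recall that for $\phi(r)=\sqrt{1+r}$ one has $m_1=\alpha_1=\tfrac12$, $m_2=1$, $\alpha_2=2$, so the high-frequency homogeneity is $\tfrac12$ and there is no additional loss from $\alpha_1-m_1=0$.

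For $|t|\ge 1$ I would first handle the low frequencies. Since $\phi$ satisfies (H4) with $\alpha_2=2>m_2=1$ and $\alpha_2-m_2=1$, the condition $p<\frac{N-(\alpha_2-m_2)}{m_2}=N-1=2d+2p-1$ holds automatically (it amounts to $2d+p>1$), so \eqref{res-sum2-n} applies and gives $\|K_t\psi\|_{L^\infty(G)}\lesssim|t|^{-p/2}$; together with $\|K_tP_{\le 0}f\|_{L^2}=\|P_{\le 0}f\|_{L^2}$ and Riesz--Thorin this yields $\|K_tP_{\le 0}f\|_{L^q}\lesssim|t|^{-p\delta}\|P_{\le 0}f\|_{L^{q'}}$. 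For the high frequencies $j\ge 1$ I would use \eqref{res3-3} with $\theta=1$: since $\alpha_1=m_1$, its frequency exponent becomes $N-m_1\cdot p=N-\tfrac p2$ and the decay is $|t|^{-p/2}$, so interpolation gives $\|K_t\Delta_j f\|_{L^q}\lesssim|t|^{-p\delta}2^{2j(N-p/2)\delta}\|\Delta_j f\|_{L^{q'}}$. The hypothesis $\rho\le-(N-\tfrac p2)\delta$ is precisely what makes $2^{j\rho}2^{2j(N-p/2)\delta}\le 2^{-j\rho}$ for $j\ge 1$, so summing over $j$ in the $B^\rho_{q,r}$ norm reproduces $|t|^{-p\delta}\|f\|_{B^{-\rho}_{q',r}}$.

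For $|t|\le 1$ the low-frequency kernel bound from \eqref{res-sum1} (or \eqref{res-sum2-n}) is $O(1)$, giving $\|K_tP_{\le 0}f\|_{L^q}\lesssim\|P_{\le 0}f\|_{L^{q'}}$, which is harmless. The decay now comes from the high frequencies, where I would merge \eqref{res3-2} (valid for $0\le\theta\le\tfrac{p-1}{2}$, giving $|t|^{-\theta}2^{j(N-\theta)}$ since $2m_1=1$) with \eqref{res3-3} under the reparametrisation $\theta\mapsto\tfrac{p-1+\theta}{2}$ (which covers $\tfrac{p-1}{2}\le\theta\le\tfrac p2$) into the single family $\|K_t\varphi_j\|_{L^\infty}\lesssim|t|^{-\theta}2^{j(N-\theta)}$, $0\le\theta\le\tfrac p2$. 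Interpolating against $L^2$ yields $\|K_t\Delta_j f\|_{L^q}\lesssim|t|^{-2\theta\delta}2^{2j(N-\theta)\delta}\|\Delta_j f\|_{L^{q'}}$. Choosing $\theta=\frac{\rho+N\delta}{\delta}$ when $-N\delta\le\rho\le-(N-\tfrac p2)\delta$ makes the frequency factor collapse to $2^{-j\rho}$ and produces the decay $|t|^{-2(\rho+N\delta)}$, while for $\rho<-N\delta$ one takes $\theta=0$ (no decay); this is exactly $|t|^{-\max(2(\rho+N\delta),0)}$.

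The computation is essentially routine once the building blocks of Theorem \ref{ResultTime} are in place; the only points demanding care are (i) checking that \eqref{res-sum2-n} is legitimately available here, i.e.\ that the structural hypothesis $p<\frac{N-(\alpha_2-m_2)}{m_2}$ is satisfied for $\phi=\sqrt{1+r}$ (this is what supplies the sharp $|t|^{-p/2}$ at low frequency, whereas \eqref{res-sum1} would only give $|t|^{-(p-1)/2}$), and (ii) the exponent bookkeeping at high frequency, where the homogeneity $m_1=\tfrac12$ is responsible for the doubling that turns $\rho+N\delta$ into $2(\rho+N\delta)$ and where the reparametrisation linking \eqref{res3-2} and \eqref{res3-3} must be carried out to cover the whole range $0\le\theta\le\tfrac p2$. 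I expect these two verifications, rather than the interpolation and summation, to be the main (if mild) obstacles.
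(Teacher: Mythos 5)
Your proposal is correct and follows essentially the same route as the paper: splitting $|t|\ge 1$ from $|t|\le 1$, using \eqref{res-sum2-n} (resp.\ \eqref{res-sum1}) for the low frequencies and \eqref{res3-3} with $\theta=1$ (resp.\ the merged family $|t|^{-\theta}2^{j(N-\theta)}$, $0\le\theta\le\frac p2$, from \eqref{res3-2}--\eqref{res3-3}) for the high frequencies, then Plancherel, Riesz--Thorin, and the choice $\rho+N\delta=\theta\delta$. Your explicit verification that $p<\frac{N-(\alpha_2-m_2)}{m_2}$ holds for $\phi(r)=\sqrt{1+r}$ is a point the paper leaves implicit, but otherwise the two arguments coincide.
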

\begin{proof}
First we prove the case $|t|\geq 1$. It yields from \eqref{res-sum2-n} of Theorem \ref{ResultTime}, Plancherel's identity and Riesz-Thorin interpolation theorem that
\begin{align*}
\|K_tP_{\leq 0}f\|_{L^q (G)}&\lesssim |t|^{-p\delta}\|P_{\leq 0}f\|_{L^{q'}(G)}.
\end{align*}
It follows from \eqref{res3-3} of Theorem \ref{ResultTime} by taking $\theta=1$, Plancherel's identity and Riesz-Thorin interpolation theorem that for $j\geq 1$
\begin{align*}
 \|K_t\Delta_jf\|_{L^q (G)}&\lesssim |t|^{-p\delta}2^{j\left(2N-p\right)\delta}\|\Delta_jf\|_{L^{q'}(G)}\\
    &\lesssim |t|^{-p\delta}2^{-2j\rho}\|\Delta_jf\|_{L^{q'}(G)}.
\end{align*}
Hence,
\begin{equation*}
    \|K_tf\|_{B_{q,r}^\rho(G)}\lesssim |t|^{-p\delta}\|f\|_{B_{q',r}^{-\rho}(G)},\,|t|\geq1.
\end{equation*}

Next, we prove the case $|t|\leq 1$. From \eqref{res-sum1} of Theorem \ref{ResultTime} by taking $\theta=0$, Plancherel's identity and Riesz-Thorin interpolation theorem , we get
\begin{equation*}
    \|K_tP_{\leq 0}f\|_{L^q(G)}\lesssim \|P_{\leq 0}f\|_{L^{q'}(G)}.
\end{equation*}
For $j\geq1$, it follows from \eqref{res3-2} and \eqref{res3-3} in Theorem \ref{ResultTime}, Plancherel's identity and Riesz-Thorin interpolation theorem that for $0\leq \theta\leq \frac{p}{2}$
\begin{equation*}
\|K_t\Delta_jf\|_{L^q (G)}\lesssim |t|^{-2\theta\delta}2^{2j(N-\theta)\delta}\|\Delta_jf\|_{L^{q'}(G)},
\end{equation*}
which indicates
\begin{equation*}
2^{j\rho}\|K_t\Delta_jf\|_{L^q (G)}\lesssim |t|^{-2\theta\delta}2^{2j\left(\rho+(N-\theta)\delta\right)}2^{-j\rho}\|\Delta_jf\|_{L^{q'}(G)}.
\end{equation*}
If $-N\delta \leq \rho\leq -\left(N-\frac{p}{2}\right)\delta$, then we can choose $0\leq \theta\leq \frac{p}{2}$ such that $\rho+N\delta=\theta \delta$. If $\rho<-N\delta$, we take $\theta=0$. Therefore, we have
\begin{equation*}
2^{j\rho}\|K_t\Delta_jf\|_{L^q (G)}\lesssim |t|^{-\max (2(\rho+N\delta),0)}2^{-j\rho}\|\Delta_jf\|_{L^{q'}(G)},
\end{equation*}
and
\begin{equation*}
 \|K_tf\|_{B_{q,r}^\rho(G)}\lesssim |t|^{-\max (2(\rho+N\delta),0)}\|f\|_{B_{q',r}^{-\rho}(G)},\,|t|\leq 1,
\end{equation*}
which completes the proof of the proposition.
\end{proof}

By Proposition \ref{General-Strichartz}, we obtain the Strichartz estimates for the Klein-Gordon operator.
\begin{proposition} For $i=1,2$, let $\gamma_i, q_i\in[2,\infty]$ and $\rho_i\in\mathbb{R}$ such that
	\begin{equation*}
	\begin{aligned}
	&(i)\,\frac{2}{\gamma_i}+\frac{p}{q_i} \leq \frac{p}{2},   \\
	&(ii)\,\rho_i\leq -(N-\frac{p} {2})\left(\frac{1}{2}-\frac{1}{q_i}\right),
	\end{aligned}
	\end{equation*}
 except for $(\gamma_i, q_i, p)=(2,\infty,2)$. Then for any $T>0$, the Klein-Gordon operator $K_t$ satisfies the estimates
	\begin{equation*}
	\|K_tu_0\|_{L^{\gamma_1}(\mathbb{R}, B_{q_1,2}^{\rho_1}(G))}\lesssim\|u_0\|_{L^2(G)},
	\end{equation*}
	\begin{equation*}
	\left\|\int_0^tK_{t-\tau}g(\tau,\cdot)\,d\tau\right\|_{L^{\gamma_1}\left([-T,T], B^{\rho_1}_{q_1,2}(G)\right)}\lesssim \|g\|_{L^{\gamma'_2}([-T,T],B_{q'_2,2}^{-\rho_2}(G))}.
	\end{equation*}
\end{proposition}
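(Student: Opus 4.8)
The plan is to read off both inequalities from the abstract Strichartz machinery of Proposition~\ref{General-Strichartz}, feeding it the dispersive estimate for $K_t$ proved in the previous proposition as its sole analytic input. First I would fix $i\in\{1,2\}$ and write $\delta_i=\frac12-\frac1{q_i}$. Hypothesis $(ii)$ is precisely the range condition $\rho_i\le-(N-\frac p2)\delta_i$ under which the previous proposition applies, so it furnishes
\begin{equation*}
\|K_tf\|_{B^{\rho_i}_{q_i,2}(G)}\lesssim k_i(t)\,\|f\|_{B^{-\rho_i}_{q_i',2}(G)},\qquad
k_i(t)=\begin{cases}|t|^{-\theta_1^{(i)}},&|t|\le1,\\[1mm]|t|^{-\theta_2^{(i)}},&|t|>1,\end{cases}
\end{equation*}
with $\theta_1^{(i)}=\max\!\bigl(2(\rho_i+N\delta_i),0\bigr)$ and $\theta_2^{(i)}=p\delta_i$. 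This is exactly the assumption \eqref{General-Assumption} required by Proposition~\ref{General-Strichartz} for the triplet indexed by $i$, so the whole problem is reduced to checking the membership $\gamma_i\in E(\theta_1^{(i)},\theta_2^{(i)})$.

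Then I would verify that membership from $(i)$ and $(ii)$. Since $\rho_i\le-(N-\frac p2)\delta_i$ gives $\rho_i+N\delta_i\le\frac p2\delta_i$, one has $2(\rho_i+N\delta_i)\le p\delta_i$, hence $\theta_1^{(i)}\le\theta_2^{(i)}$, so the ordered pair is admissible. Rewriting $(i)$ as $\frac2{\gamma_i}\le\frac p2-\frac p{q_i}=p\delta_i=\theta_2^{(i)}$ and recalling $\gamma_i\ge2$, I would distinguish the cases in the definition of $E$: a strict inequality $\frac2{\gamma_i}<\theta_2^{(i)}$ together with $\theta_1^{(i)}<\frac2{\gamma_i}$ lands in case $(4)$, the coincidence $\theta_1^{(i)}=\theta_2^{(i)}$ lands in case $(1)$, and the boundary $\frac2{\gamma_i}=\theta_2^{(i)}<1$ lands in case $(3)$. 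The single configuration not covered is $\frac2{\gamma_i}=\theta_2^{(i)}=1$, which forces $q_i=\infty$, $\gamma_i=2$ and $p=2$; this is exactly the excluded triple $(\gamma_i,q_i,p)=(2,\infty,2)$, so discarding it keeps us inside $E(\theta_1^{(i)},\theta_2^{(i)})$.

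With \eqref{General-Assumption} in hand and $\gamma_i\in E(\theta_1^{(i)},\theta_2^{(i)})$ for $i=1,2$, the homogeneous bound $\|K_tu_0\|_{L^{\gamma_1}(\mathbb R,B^{\rho_1}_{q_1,2}(G))}\lesssim\|u_0\|_{L^2(G)}$ is the first conclusion of Proposition~\ref{General-Strichartz} applied to the triplet $i=1$. For the inhomogeneous bound I would invoke the bilinear duality of Lemma~\ref{DuilatyXX} with the two triplets $A_i=K_t$ mapping $L^2(G)$ into $L^{\gamma_i}([-T,T],B^{\rho_i}_{q_i,2}(G))$: it controls $\int_0^tK_{t-\tau}g\,d\tau=A_1^*A_2g$ in $L^{\gamma_1}([-T,T],B^{\rho_1}_{q_1,2}(G))$ by $\|g\|_{L^{\gamma_2'}([-T,T],B^{-\rho_2}_{q_2',2}(G))}$, which is the claimed estimate. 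I expect the main obstacle to be the endpoint bookkeeping of the previous paragraph: one must confirm that $(i)$ and $(ii)$, minus the single excluded triple, place $\frac2{\gamma_i}$ in the closed band $[\theta_1^{(i)},\theta_2^{(i)}]$ and avoid the forbidden value $\frac2{\gamma_i}=\theta_2^{(i)}=1$, so that Lemma~\ref{VHLW} with $d=1$, which underlies Proposition~\ref{General-Strichartz}, genuinely applies.
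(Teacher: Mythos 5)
Your route is exactly the paper's: the paper gives no proof of this proposition beyond invoking Proposition~\ref{General-Strichartz} together with the preceding Besov dispersive estimate for $K_t$ (with $\theta_1=\max(2(\rho+N\delta),0)$ and $\theta_2=p\delta$), and the two-triplet inhomogeneous bound via Lemma~\ref{DuilatyXX} is precisely the intended mechanism, so your write-up simply supplies the admissibility check $\gamma_i\in E(\theta_1^{(i)},\theta_2^{(i)})$ that the paper leaves implicit. One caveat on that check: your claim that the sole uncovered configuration is $(\gamma_i,q_i,p)=(2,\infty,2)$ is too optimistic, since hypotheses $(i)$--$(ii)$ also permit $\tfrac{2}{\gamma_i}<\theta_1^{(i)}$ (e.g. $\rho_i=-(N-\tfrac p2)\delta_i$ with $\gamma_i$ large, giving $\theta_1^{(i)}=\theta_2^{(i)}=p\delta_i$) and $\tfrac{2}{\gamma_i}=\theta_2^{(i)}=1$ with $p>2$, neither of which lies in $E(\theta_1^{(i)},\theta_2^{(i)})$ as defined — but this endpoint looseness is present in the paper's own statement and does not represent a departure from its argument.
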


\begin{remark}
    As the fractional Schr\"odinger equation and fourth-order Schr\"odinger equation, we can also obtain the Strichartz inequalities for the solution of the beam equation \eqref {BeamEqu} and the Klein-Gordon equation \eqref{K-GEqu}. We omit it here.
\end{remark}

\section*{Acknowledgments} M.S. is supported by the National Natural Science Foundation of China (Grant No. 11701452) and Guangdong Basic and Applied Basic Research Foundation (No. 2023A1515010656).  J.T. is supported by Jiangxi Normal University 12022821.


\begin{thebibliography}{99}
		
		
		\normalsize
		\baselineskip=17pt
\bibitem{AP2009}\label{AP2009} J. -P. Anker, V. Pierfelice, \emph{Nonlinear Schr\"odinger equation on real hyperbolic spaces}, Ann. Inst. H.
Poincar\'e (C) Non Linear Anal. \textbf{26}(5), 1853-1869 (2009).

\bibitem{AP2014}\label{AP2014} J. -P. Anker, V. Pierfelice, \emph{Wave and Klein-Gordon equations on hyperbolic spaces}, Anal. PDE \textbf{7}, 953-995 (2014).

\bibitem{APV2012}\label{APV2012} J. -P. Anker, V. Pierfelice, M. Vallarino, \emph{The wave equation on hyperbolic spaces}, J. Differ. Equ. \textbf{252}(10), 5613-5661 (2012).

\bibitem{BBG2021}\label{BBG2001} H. Bahouri, D. Barilari, I. Gallagher, \emph{Strichartz estimates and Fourier restriction theorems on the Heisenberg group}, J. Fourier Anal. Appl. \textbf{27}(2) (2021).

\bibitem{BCD}\label{BCD} H. Bahouri, J. -Y. Chenmin,  R. Danchin, \emph{Fourier analysis and nonlinear partial differential equations},
Grundlehren der Math. Wissenschaften \textbf{343}, Springer, Heidelberg (2011).

\bibitem{BGX2000}\label{BGX2000} H. Bahouri, P. G\'{e}rard, C. J. Xu, \emph{Espaces de Besov et estimations de Strichartz g\'{e}n\'{e}ralis\'{e}es sur le groupe de
Heisenberg}, J. Anal. Math. \textbf{82}, 93-118 (2000).

\bibitem{BKG}\label{BKG} H. Bahouri, C. F. Kammerer,  I. Gallagher, \emph{Dispersive estimates for the Schr\"{o}dinger operator on step 2 stratified Lie groups}, Anal. PDE \textbf{9}, 545-574 (2016).

\bibitem{BU}\label{BU} A. Bonfiglioli and F. Uguzzoni, \emph{Nonlinear Liouville theorems for some critical problems on H-type groups}, J. Funct. Anal. \textbf{207}, 161-215 (2004).

\bibitem{B1993}\label{B1993} J. Bourgain, \emph{Fourier transform restriction phenomena for certain lattice subsets and application to
nonlinear evolution equations \uppercase \expandafter {\romannumeral 1}}, Geom. Funct. Anal. \textbf{3}, 107-156 (1993).

\bibitem{BDXM2019}\label{BDXM2019} T. A. Bui, P. D\text{'}Ancona, X. T. Duong, D. M\"{u}ller, \emph{On the flows associated to self-adjoint operators on metric measure spaces}, Math. Ann. \textbf{375}, 1393-1426 (2019).

\bibitem{BGT2004}\label{BGT2004} N. Burq, P. G\'{e}rard, N. Tzvetkov, \emph{Strichartz inequalities and the nonlinear Schr\"odinger equation on
compact manifolds}, Amer. J. Math. \textbf{126}, 569-605 (2004).

\bibitem{DR}\label{DR} E. Damek and F. Ricci, \emph{Harmonic analysis on solvable extensions of H-type groups}, J. Geom. Anal. \textbf{2}, 213-248 (1992).

\bibitem{DPR2010}\label{DPR2010} P. D\text{'}Ancona, V. Pierfelice, F. Ricci, \emph{On the wave equation associated to the Hermite and the twisted Laplacian}, J. Fourier Anal. Appl. \textbf{16}(2), 294-310 (2010).

\bibitem{FSW}\label{FSW} G. Feng, M. Song and H. Wu, \emph{Decay estimates for a class of semigroups related to self-adjoint operators on metric measure spaces}, arXiv: 2308. 00388 (2023)

\bibitem{FS}\label{FS} G. B. Folland and E. M. Stein, \emph{Hardy spaces on homogeneous groups}, Math. Notes, Princeton Univ. Press, 1992.

\bibitem{FMV2006}\label{FMV2006} G. Furioli, C. Melzi, A. Veneruso, \emph{Littlewood–Paley decompositions and Besov spaces on Lie groups of polynomial
growth}, Math. Nachr. \textbf{279}, 1028-1040 (2006).

\bibitem{FMV1}\label{FMV1}G. Furioli, C. Melzi and A. Veneruso, \emph{Strichartz inequalities for the wave equation with the full Laplacian on the Heisenberg group},  Canad. J. Math. \textbf{59}(6), 1301-1322 (2007).

\bibitem{FV}\label{FV} G. Furioli and A. Veneruso, \emph{Strichartz inequalities for the Schr\"{o}dinger equation with the full Laplacian on the Heisenberg group}, Studia Math. \textbf{160}, 157-178 (2004).

\bibitem{GV}J. Ginibre, G. Velo, \emph{Generalized Strichartz inequalities for the wave equation}, J. Funct. Anal. \textbf{133}, 50-68 (1995).

\bibitem{GPW2008} Z. Guo, L. Peng and B. Wang, \emph{Decay estimates for a class of wave equations}, J. Funct. Anal. \textbf{254}(6), 1642-1660 (2008).

\bibitem{ILP2014} O. Ivanovici, G. Lebeau, F. Planchon, \emph{Dispersion for the wave equation inside strictly convex domains \uppercase \expandafter {\romannumeral 1}: the Friedlander model case}, Ann. Math. \textbf{180}, 323-380 (2014).

\bibitem{LS2014} H. Liu and M. Song, \emph{Strichartz inequalities for the wave equation with the full Laplacian on H-type groups}, Abstr. Appl. Anal., Art. ID 219375, 10 pp (2014).

\bibitem{H2005}\label{H2005} M. D. Hierro, \emph{Dispersive and Strichartz estimates on H-type groups}, Studia Math. \textbf{169}, 1-20 (2005).

\bibitem{Hul}\label{Hul} A. Hulanicki, \emph{A functional calculus for Rockland operators on nilpotent Lie groups}, Studia Math. \textbf{78}, 253-266 (1984).

\bibitem{John1981}\label{John1981} F. John, \emph{Plane waves and spherical means applied to partial differential equations}, reprint of the 1955 original, Springer, 1981.

\bibitem{KR}\label{HR} A. Kaplan and F. Ricci, \emph{Harmonic analysis on groups of Heisenberg type}, Harmonic analysis, Lecture Nothes in Math. \textbf{992}, 416-435 (1983).

\bibitem{KT}\label{KT} M. Keel, T. Tao, \emph{Endpoints Strichartz estimates}, Amer. J. Math. \textbf{120}, 955-980 (1998).

\bibitem{Kor}\label{Kor} A. Kor\'{a}nyi, \emph{Some applications of Gelfand pairs in classical analysis, in: Harmonic analysis and group representations}, 1982: 333-348.

\bibitem{NR2005}\label{NR2005} A. K. Nandakumaran, P. K. Ratnakumar, \emph{Schr\"{o}dinger equation and the oscillatory semigroup for the Hermite operator}, J. Funct. Anal. \textbf{224}(2), 371-385 (2021).

\bibitem{PT} M. Pinsky, M. Taylor, \emph{Fourier inversion: A wave equation approach}, J. Fourier Anal. Appl. \textbf{3}(6), 647-703 (1997).

\bibitem{R2008}\label{R2008} P. K. Ratnakumar, \emph{On Schr\"{o}dinger Propagator for the Special Hermite Operator}, J. Fourier Anal. Appl. \textbf{14}(2), 286-300 (2008).

\bibitem{R}\label{R} H. M. Reimann, \emph{H-type groups and Clifford modules}, Adv. Appl. Clifford Algebras\ textbf{11}, 277-287 (2001).

\bibitem{S2013}V. K. Sohani, \emph{Strichartz estimates for the Schr\"{o}dinger propagator for the Laguerre operator}, Proc. Math. Sci. \textbf{123}, 525-537 (2013).

\bibitem{Song2016}M. Song, \emph{Decay estimates for fractional wave equations on H-type groups}, J. Inequal. Appl. \textbf{2016}(246), 12pp (2016).

\bibitem{SY2023}M. Song, J. Yang, \emph{Decay estimates for a class of wave equations on the Heisenberg group}, Ann. Mat. Pur. Appl., https://doi.org/10.1007/s10231-023-01334-x (2023).

\bibitem{SZ}N. Song, J. Zhao, \emph{Strichartz estimates on the quaternion Heisenberg group}, Bull. Sci. Math. \textbf{138}(2), 293-315 (2014).

\bibitem{S1993}\label{S1993} E. M. Stein,\emph{Harmonic analysis: real-variable methods, orthogonality and oscillatory integrals}, Princeton Univ. Press, 1993.

\bibitem{Str} R. S. Strichartz, \emph{Restrictions of Fourier transforms to quadratic surfaces and
	decay of solutions of wave equations}, Duke Math. J. \textbf{44}, 705-714 (1977).

\bibitem{Tao2006} T. Tao, \emph{Nonlinear dispersive equations. Local and global analysis}, CBMS Regional conference series in mathematics, AMS (2006).
		
	\end{thebibliography}
\end{document}